\pgfplotsset{width=7cm,
        standard/.style={
        axis x line=middle,
        axis y line=middle,
        enlarge x limits=0.15,
        enlarge y limits=0.15,
        every axis x label/.style={at={(current axis.right of origin)},anchor=north west},
        every axis y label/.style={at={(current axis.above origin)},anchor=north east}
    }
}
 \newcommand\ForAuthors[1]
\newcommand{\assale}[1]{{\color{blue} #1}}
\newcommand{\comment}[1]{}
\def\norm#1{\mbox{$\| #1 \|$}}
\def\ineq{\leq_{w,s}}
\def\nn{\mathbb N}
\newcommand{\rr}{\mathbb R}
\newcommand{\rd}{\rr^d}
\newcommand{\pp}{\mathbb P}
\newcommand{\Mat}{\mathbf{M}}
\newcommand{\Id}{\operatorname{Id}}
\newcommand{\Sdp}[1]{\mathbb{S}_{#1}^{+}}
\newcommand{\Sp}[1]{\mathbb{S}_{#1}^{\geq 0}}
\newcommand{\Sy}[1]{\mathbb{S}_{#1}}
\newcommand{\mt}[2]{\mathbb{M}_{#1 \times #2}}
\newcommand{\ccop}[2]{\mathbf{C}_{#1}\left(#2\right)}
\newcommand{\cop}[1]{\mathbf{C}_{#1}}
\newcommand{\psd}[1]{#1_{+}}
\newcommand{\pos}[1]{#1_{p}}
\newcommand{\bHom}[1]{\overline{\mathbf{H}\left(#1\right)}}
\newenvironment{psmallmatrix}
  {\left(\begin{smallmatrix}}
  {\end{smallmatrix}\right)}
\def\st{\operatorname{s.t.}}
\def\FS{\mathcal{S}}
\def\Sol#1{\operatorname{Sol}_{\lambda}\left(#1\right)}
\def\ux{\begin{pmatrix} 1\\ x\end{pmatrix}}
\def\betab{\omega}
\newcommand{\rel}[1]{#1^{\mathcal R}}
\newcommand{\sha}[1]{#1^{\sharp}}
\newcommand{\Sw}{\mathrm{Sw}}
\newcommand{\In}{\mathrm{In}}
\newcommand{\rea}{\mathcal{R}}
\newcommand{\state}{\mathcal{X}}
\newcommand{\laws}{\mathcal{A}}
\newcommand{\map}[1]{\mathbb{A}^{#1}}
\newcommand{\pws}{P}
\newcommand{\mybrackets}[1]{\big(#1\big)}
\newcommand{\ind}{\mathcal{I}}
\newcommand{\Pq}{\mathcal{P}}
\newtheorem{definition}{Definition}
\newtheorem{lemma}{Lemma}
\newtheorem{corollary}{Corollary}
\newtheorem{proposition}{Proposition}
\newtheorem{theorem}{Theorem}
\newtheorem{example}{Example}
\newtheorem{proof}{Proof}
\title{Overapproximating the Reachable Values Set of Piecewise Affine Systems Coupling Policy Iterations with Piecewise Quadratic Lyapunov
Functions}
\author{Assal\'e Adj\'e\thanks{The author has been supported by the CIMI (Centre International de Math\'ematiques et d'Informatique) Excellence program ANR-11-LABX-0040-CIMI within the program
ANR-11-IDEX-0002-02 during a postdoctoral fellowship.}\\assale.adje@irit.fr\\Institut de Recherche en Informatique de Toulouse (IRIT)\\ Universit\'e Paul Sabatier\\ Toulouse, France}
\date{}
\begin{document}
\maketitle

\begin{abstract}
We have recently constructed a piecewise quadratic Lyapunov function to prove the boundedness of the reachable values set of piecewise affine discrete-time systems. The method developed also provided an overapproximation of the reachable values set. In this paper, we refine the latter overapproximation extending previous works combining policy iterations with quadratic Lyapunov functions.  
\end{abstract}



\section{Introduction}
Several catastrophic events showed the importance of the formal verification of programs. Some of these failures
are caused by overflows. A method to prove the absence of overflows in numerical programs consists in providing 
precise safe bounds over the reachable states of the program variables. 

In this paper, we are interesting in a particular class of numerical programs: single while loop programs with a switch-case structure
inside the loop body. Moreover, we suppose that test and assignment functions are affine. These programs can be represented as 
piecewise affine discrete-time systems. To overapproximate the reachable states of the program variables is thus reduced to 
overapproximate the reachable values set of a piecewise affine discrete-time system. Hence, we propose to compute \emph{automatically} 
precise bounds over piecewise affine discrete-time systems using policy iterations and piecewise quadratic Lyapunov functions.   

Initially policy iteration solves stochastic control problems~\cite{howard:dp} which can be reduced to solve fixed point problems 
involving functions with maxima of affine functions coordinates. Policy iteration was then extended to zero-sum two-player stochastic 
games~\cite{doi:10.1287/mnsc.12.5.359}, this extension allows the computation of the unique fixed point of min-max of affine maps.
The very first extension of policy iterations in program analysis was in 2005 by Costan et al~\cite{costan2005policy}. 
Since then, the usage of policy iteration in various verification problems greatly increases: in~\cite{DBLP:journals/jsc/GawlitzaSAGG12},
the authors describe policy iteration algorithm to overapproximate the reachable values set of numerical programs with affine assignments; 
in~\cite{Masse201277}, the author proves termination by policy iteration; in~\cite{Schrammel,sotin2011policy} the authors propose to embed policy iterations for
programs dealing with both numerical and boolean variables. 

The method developed in~\cite{adje2014piecewise} allows to prove that the reachable values set of a piecewise affine system is bounded. 
The method relies on the synthesis of a piecewise quadratic Lyapunov function of this piecewise affine system. The problem formulation makes appear as a decision variable an upper bound on the Euclidian norm of the state variable. This upper bound can be very loose since it combines all the coordinates together. We propose to use a templates based method. A templates method consists in representing 
sets as sublevel sets of \emph{given} functions called \emph{templates}. Then to compute an overapproximation is reduced to computing bounds over the templates. The most precise overapproximation with respect to these templates is provided by the vector of bounds satisfying a smallest fixed point.   
In our context, the generated piecewise quadratic Lyapunov function is used as a template. We complete the templates basis by the square of variables. Finally we use policy iterations to solve the (smallest) fixed point equation. Thus, policy iterations algorithm leads to tighter bounds over the reachable values set.

The use of quadratic Lyapunov function as quadratic templates was explicitly done in~\cite{DBLP:conf/hybrid/RouxJGF12} but it is not enough to prove the boundedness of reachable values set of a piecewise affine system unless that a common quadratic Lyapunov function exists. 
Policy iteration algorithms in templates domain proposed in~\cite{dblp1968095,DBLP:journals/jsc/GawlitzaSAGG12} used quadratic templates and did not handle piecewise quadratic templates. In this paper, we adapt policy iteration based on Lagrange duality~\cite{adjepolicy} to piecewise quadratic functions. The works on piecewise quadratic Lyapunov functions~\cite{1205102,912814} are also related to this paper. Their authors are interested in proving stability of piecewise linear systems. However, as classical quadratic Lyapunov functions, piecewise quadratic Lyapunov functions provide sublevel invariant sets to the system. We use this latter interpretation for a verification purpose. Finally, note that tropical polyhedra domain~\cite{AllamigeonThesis} generates disjunctions of zones as invariants. The latter invariants did not encode quadratic relations between variables. 
  
The first contribution of the paper is the formalisation of piecewise quadratic Lyapunov functions to prove the boundedness of the trajectories of
a piecewise affine discrete-time dynamical system. This formalisation uses the theory of cone-copositive matrices which is also an 
original contribution in this context. 

The main contribution of the article is the extension of policy iterations algorithm to the piecewise quadratic Lyapunov functions in order to provide precise bounds on the reachable values. Indeed, policy iteration has just been constructed in the case of quadratic functions.         


\subsubsection*{Notations}
Numbers. $\nn$ denotes the set of nonnegative integers, then for $d\in\nn$,
$[d]=\{1,\ldots,d\}$. $\rr$ is the set of reals, $\rr_+$ the set of nonnegative reals and $\rr^d$ denotes the set of vectors of $d$ reals.
We denote by $\wp(\rd)$ the set of subsets of $\rd$. 

Inequalities. For $y,z\in\rd$, $y<z$ (resp. $y\leq z$) means $\forall\, l\in [d]$, $y_l<z_l$, (resp. $\forall\, l\in [d]$, $y_l\leq z_l$)
and $y\ineq z$ is a mix of weak and strict inequalities.

Matrices. $\mt{n}{m}$ is the set of matrices with $n$ rows and $m$ columns. $0_{n,m}$ and $0_n$ are respectively the null matrices of $\mt{n}{m}$ and $\mt{n}{n}$.
$\Id_{n}$ is the identity matrix of $\mt{n}{n}$. $M^\intercal$ is the transpose of $M\in\mt{n}{m}$. $\Sy{n}$ is the set of symmetric matrices of size $n\times n$. 
$A\succeq 0$ means that $A$ is semi-definite positive i.e. $A\in\Sy{d}$ and $\forall\, x\in\rd$, $x^\intercal A x\geq 0$. $\Sdp{d}$ is the convex cone
of semidefinite positive matrices.

\section{Piecewise affine discrete-time systems}
\label{statement}
In this section, we detail the systems we will consider in the paper.

Piecewise affine systems (PWA for short) are defined as systems the dynamic of which is piecewise affine and thus the dynamic is characterized by 
a polyhedral partition and a family of affine maps relative to this partition. For us, a polyhedral partition is a family of convex 
polyhedra such that:
\begin{equation}
\label{partition}
\bigcup_{i\in \ind}X^i=\rd\text{ and }\forall\, i,j\in \ind,\ i\neq j\ X^i\cap X^j=\emptyset\enspace . 
\end{equation}
The convex polyhedron $X^i$ can contain both strict and weak inequalities and is represented by $T^i\in\mt{n_i}{m}$ 
and $c^i\in\rr^{n_i}$. We denote by $T_s^i$ (resp. $T_w^i$) and $c_s^i$ (resp. $c_w^i$) the parts of $T^i$ and $c^i$ 
corresponding to strict (resp. weak) inequalities: 
\begin{equation}
\label{polyhedra}
\begin{array}{ll}
X^i&=\left\{x\in\rd \left| T^i x \ineq c^i\right\}\right.\\
   &=\left\{x\in\rd \left| T_s^i x < c_s^i,\ T_w^i x \leq c_w^i\right\}\right.
\end{array}
\end{equation}
\begin{definition}[Piecewise Affine System]
\label{pwadef}
A PWA is characterized by the triple $(X^0,\state,\laws)$ where:
\begin{itemize}
\item $X^0$ is the polytope of the initial conditions of the form~\eqref{polyhedra};
\item $\state:=\{X^i, i\in \ind\}$ is a polyhedral partition i.e. satisfying~\eqref{partition};
\item $\laws:=\{x\mapsto f^i(x)=A^i x+b^i, i\in\ind\}$ where $A^i\in\mt{d}{d}$ and $b^i\in\rd$;
\end{itemize}
And satisfies the following relation for all $k\in\nn$:
\begin{equation}
\label{pwa}
x_0\in X^0,\ \text{if } x_k\in X^i,\ x_{k+1}=f^i(x_k)\enspace.
\end{equation}
\end{definition} 
Let $\pws=(X^0,\state,\laws)$ be a PWA. We now define some tools that we need during the analysis. 
First we define the reachable values set $\rea$ of $\pws$:
\begin{equation}
\label{reach}
\rea=\bigcup_{k\in\nn} {\map{}}^k(X^0),\text{ where } \map{}(x)=f^i(x) \text{ if } x\in X^i
\end{equation}
We define the set of possible switches:
\begin{equation}
\label{switches}
\begin{array}{l}
\Sw:=\{(i,j)\in \ind^2\mid \rea\cap X^{ij}\neq \emptyset\}\\ 
\text{ where }X^{ij}=X^i\cap {f^{i}}^{-1}(X^j)\enspace.
\end{array}
\end{equation}
Finally, we define the set of indices of polyhedra of $\state$ which meet the polyhedron of possible initial conditions:
\begin{equation}
\label{initialpol}
\In:=\{i\in \ind\mid X^{i0}\neq \emptyset\} \text{ where } X^{i0}=X^i\cap X^0\enspace.
\end{equation} 
We introduce for $i\in\ind$, the following matrix of $\mt{(d+1)}{(d+1)}$:
\begin{equation} 
\label{homogeneouslaw}
F^i=\begin{pmatrix} 
    1 & 0_{1\times d} \\
    b^i & A^i 
    \end{pmatrix}\enspace .
\end{equation}
Eq.~\eqref{pwa} can be rewritten as  $(1,x_{k+1})^\intercal =F^i(1,x_k)$.

We are interested in computing \emph{automatically} precise overapproximation of $\rea$. We propose to compute 
an overapproximation of $\rea$ as a set $S\subseteq \rd$ such that $X^0\subseteq S$ and
$\forall\, i\in\ind,\ x\in S\cap X^i \implies A^i x+b^i\in S$.
The set $S$ can be computed as a sublevel of a Lyapunov function containing the initial 
states. 

From now, we work with a fixed PWA $\pws=(X^0,\state,\laws)$, where $X^0$, $\state$ and $\laws$ are of the form
of Def.~\ref{pwadef}. 

\section{Piecewise quadratic Lyapunov functions}
\label{piecewiselyapunov}
In this paper, we use piececewise quadratic Lyapunov functions for piecewise affine systems to compute directly 
an overapproximation of reachable values set.

Let $q$ be a quadratic form i.e. a function such that for all $y\in\rd$, $q(y)=y^\intercal A_q y+b_q^\intercal y+c_q$ where $A_q\in\Sy{d}$, $b_q\in\rd$ and $c_q\in\rr$.
We define the lift-matrix of $q$, the matrix of $\Sy{d+1}$ defined as follows:
\begin{equation}
\label{liftmatrix}
\Mat(A_q,b_q,c_q)=\Mat(q)=\begin{pmatrix} 
c_q & (b_q/2)^\intercal \\ 
(b_q/2) & A_q 
\end{pmatrix}
\end{equation}
It is obvious that the $q\mapsto \Mat(q)$ is linear. Let $A\in\mt{d}{d}$, $b\in\rd$, and $q$ be a quadratic form, we have, for all $x\in\rd$: 
\begin{equation}
\label{lemmasimple}
q(Ax+b)=\ux^\intercal \begin{pmatrix}
1& 0_{1\times d}\\
b & A
\end{pmatrix}^\intercal \Mat(q) \begin{pmatrix}
1& 0_{1\times d}\\
b & A
\end{pmatrix} \ux\enspace .
\end{equation}
\begin{lemma}
\label{quadextension}
Let $A\in\Sy{d}$, $b\in\rd$ and $c\in \rr$. Then: $(\forall\, y\in\rd,\ y^\intercal A y+b^\intercal y+c\geq 0)\iff \Mat(A,b,c)\in \Sdp{d+1}$
\end{lemma}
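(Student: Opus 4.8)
The plan is to prove the equivalence in Lemma~\ref{quadextension} by connecting the quadratic form $y \mapsto y^\intercal A y + b^\intercal y + c$ to the quadratic form on $\rr^{d+1}$ induced by the lift-matrix $\Mat(A,b,c)$, and then characterizing nonnegativity on the relevant set via semidefiniteness. The central algebraic identity is that for every $y \in \rd$,
\[
y^\intercal A y + b^\intercal y + c = \begin{pmatrix} 1 \\ y \end{pmatrix}^\intercal \Mat(A,b,c) \begin{pmatrix} 1 \\ y \end{pmatrix},
\]
which follows by simply expanding the right-hand side using the block structure in~\eqref{liftmatrix}. This is the homogenization trick: the inhomogeneous quadratic in $d$ variables becomes a genuine (homogeneous) quadratic form evaluated on the affine slice $\{(1,y) : y \in \rd\}$ of $\rr^{d+1}$.

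First I would establish the $(\Leftarrow)$ direction, which is immediate: if $\Mat(A,b,c) \in \Sdp{d+1}$, then by definition $z^\intercal \Mat(A,b,c) z \geq 0$ for \emph{all} $z \in \rr^{d+1}$, so in particular for $z = (1,y)^\intercal$, and the identity above yields $y^\intercal A y + b^\intercal y + c \geq 0$ for all $y \in \rd$. The harder direction is $(\Rightarrow)$, because the hypothesis only gives nonnegativity on the affine slice where the first coordinate equals $1$, whereas semidefiniteness demands nonnegativity on all of $\rr^{d+1}$, including vectors $(0,y)^\intercal$ and arbitrary scalings.

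The main obstacle is thus extending nonnegativity from the slice to the whole space, and I would handle it by a homogeneity-plus-limit argument. Take an arbitrary $z = (t, y)^\intercal \in \rr^{d+1}$. If $t \neq 0$, factor out $t^2$ and use the slice hypothesis on $y/t$:
\[
z^\intercal \Mat(A,b,c) z = t^2 \begin{pmatrix} 1 \\ y/t \end{pmatrix}^\intercal \Mat(A,b,c) \begin{pmatrix} 1 \\ y/t \end{pmatrix} = t^2\left( (y/t)^\intercal A (y/t) + b^\intercal (y/t) + c \right) \geq 0.
\]
The remaining case $t = 0$ is the genuinely delicate point, since we cannot normalize. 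I would treat it by continuity: the quadratic form $z \mapsto z^\intercal \Mat(A,b,c) z$ is continuous, and $(0,y)^\intercal$ is a limit of $(t,y)^\intercal$ as $t \to 0$ with $t \neq 0$; each term in the approaching sequence is nonnegative, so the limit is nonnegative as well. This establishes $z^\intercal \Mat(A,b,c) z \geq 0$ for all $z \in \rr^{d+1}$, which is exactly $\Mat(A,b,c) \in \Sdp{d+1}$, completing the equivalence.
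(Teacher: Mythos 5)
Your proof is correct: the paper states Lemma~\ref{quadextension} without giving a proof, treating it as the standard homogenization fact, and your argument --- the identity $y^\intercal A y + b^\intercal y + c = (1\ y^\intercal)\,\Mat(A,b,c)\,(1\ y^\intercal)^\intercal$, the scaling $z^\intercal \Mat(A,b,c) z = t^2 q(y/t)$ for $t\neq 0$, and the limit $t\to 0$ yielding $y^\intercal A y\geq 0$ --- is exactly the canonical argument the paper implicitly relies on. Your treatment of the $t=0$ case, the only nontrivial point, is sound; it is equivalent to dividing $q(sy)\geq 0$ by $s^2$ and letting $s\to\infty$ to conclude $A\in\Sdp{d}$.
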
 
\begin{definition}[(Cone)-copositive matrices]
Let $M\in\mt{m}{d}$. A matrix $Q\in\Sy{d}$ which satisfies 
\[
My\geq 0\implies y^\intercal Q y\geq 0
\] 
is called $M$-copositive.

An $\Id_{d}$-copositive matrix is called a \emph{copositive matrix}. We denote by $\ccop{d}{M}$ the set of $M$-copositive matrices and $\cop{d}$ the set of copositive matrices.
\end{definition} 
For $P\in\mt{n}{m}$ and $c\in\rr^n$, we define the following matrix:
\begin{equation}
\label{homogeneouspol}
\bHom{P,c}=
\begin{pmatrix}
1 &  & 0_{1\times m}\\
c &  &-P
\end{pmatrix}
\in\mt{(n+1)}{(m+1)}
\end{equation}
\begin{lemma}
Let $P\in\mt{n}{m}$ and $c\in\rr^n$. Then, for all $x\in\rr^n$, $Px\leq c\iff \bHom{P,c}\ux \geq 0$.
\end{lemma}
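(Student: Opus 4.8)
The plan is to verify the equivalence by a direct block computation of the product $\bHom{P,c}\,\ux$, where for dimensional consistency $x$ must range over $\rr^m$ (so that $\ux\in\rr^{m+1}$ matches the $m+1$ columns of the $(n+1)\times(m+1)$ matrix $\bHom{P,c}$; the statement's ``$x\in\rr^n$'' should read $x\in\rr^m$). First I would multiply out the product using the block form~\eqref{homogeneouspol}: the top row $(1,\ 0_{1\times m})$ applied to $\ux$ yields the scalar $1$, while the bottom block $(c,\ -P)$ yields $c\cdot 1-Px=c-Px$. Hence
\[
\bHom{P,c}\,\ux=\begin{pmatrix} 1\\ c-Px\end{pmatrix}\enspace .
\]

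From this single identity both implications follow at once. Reading the vector inequality $\bHom{P,c}\,\ux\geq 0$ componentwise (the entrywise convention fixed in the Notations), the first coordinate is the constant $1$, which is nonnegative regardless of $x$ and so imposes no constraint; the remaining $n$ coordinates assert precisely $c-Px\geq 0$, i.e. $Px\leq c$. Therefore $\bHom{P,c}\,\ux\geq 0$ holds if and only if $Px\leq c$, which is the claimed equivalence. Because every step is an identity or a componentwise rewriting of the very same inequality, the two directions need no separate treatment.

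There is essentially no genuine obstacle here: the only points requiring care are the bookkeeping of the block dimensions and the convention that $\geq 0$ on vectors is understood entrywise. The constant top entry $1$ is what makes the embedding faithful, since it guarantees that the extra coordinate introduced in passing from $x$ to $\ux$ never interferes with the feasibility test; this is exactly why the homogenized description $\bHom{P,c}\,\ux\geq 0$ of the polyhedron $\{x\mid Px\leq c\}$ is an exact reformulation rather than merely an overapproximation.
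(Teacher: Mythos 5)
Your proof is correct and is exactly the direct block computation the paper relies on implicitly (the lemma is stated there without proof): $\bHom{P,c}\ux$ has first entry the constant $1$ and remaining entries $c-Px$, so the entrywise inequality $\bHom{P,c}\ux\geq 0$ reduces precisely to $Px\leq c$. You are also right that the statement's ``$x\in\rr^n$'' is a typo for $x\in\rr^m$, as forced by $P\in\mt{n}{m}$ and the $m+1$ columns of $\bHom{P,c}$.
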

\begin{lemma}
\label{polyhedralcopo}
Let $q:\rd\to\rd$ be a quadratic function. Let $M\in\mt{m}{d}$ and $p\in\rr^m$. Let us consider $C=\{x\mid Mx\leq p\}$. Then $\Mat(q)\in \ccop{d+1}{\bHom{M,p}}\implies (q(x)\geq 0,\ \forall\, x \in C)$.
\end{lemma}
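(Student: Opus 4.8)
The plan is to prove the implication by directly chaining the lift-matrix identity~\eqref{lemmasimple} with the two immediately preceding lemmas; since only the ``soundness'' direction is asserted, no Positivstellensatz- or S-procedure-type argument is needed. First I would fix an arbitrary $x\in C$, so that $Mx\leq p$ holds by definition of $C$. Applying the preceding lemma with $P=M$ and $c=p$ converts this polyhedral membership into the equivalent inequality $\bHom{M,p}\,\ux\geq 0$ for the lifted vector $\ux$.

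The next step is to instantiate the copositivity hypothesis at $y=\ux\in\rr^{d+1}$. By definition, $\Mat(q)\in\ccop{d+1}{\bHom{M,p}}$ says that $\bHom{M,p}\,y\geq 0\implies y^\intercal\Mat(q)\,y\geq 0$ for every $y\in\rr^{d+1}$. Having just established $\bHom{M,p}\,\ux\geq 0$, the hypothesis yields $\ux^\intercal\Mat(q)\,\ux\geq 0$. Finally, the lift-matrix identity --- read directly off~\eqref{liftmatrix} by a one-line computation, or obtained as the $A=\Id_d$, $b=0$ instance of~\eqref{lemmasimple} --- gives $\ux^\intercal\Mat(q)\,\ux=q(x)$, whence $q(x)\geq 0$. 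Since $x\in C$ was arbitrary, this is exactly the conclusion.

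There is essentially no hard step here; the one place that warrants a comment, and the reason the result is stated as a one-way implication, is that $\bHom{M,p}$-copositivity of $\Mat(q)$ quantifies over \emph{all} $y\in\rr^{d+1}$, whereas the argument only ever uses vectors whose first coordinate is $1$, namely those of the form $\ux$. For the forward direction this is harmless, as one is free to restrict a universally quantified hypothesis to this subfamily. A converse would additionally have to control the homogeneous directions $y=(0,z^\intercal)^\intercal$ and would not follow automatically, but it is not required for the overapproximation argument pursued in the paper.
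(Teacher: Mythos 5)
Correct, and this matches the paper's (implicit) argument: the lemma is stated without proof precisely because it follows, as you show, by feeding $\bHom{M,p}\,(1\ x^\intercal)^\intercal\geq 0$ (from the preceding lemma) into the definition of $\bHom{M,p}$-copositivity at $y=(1\ x^\intercal)^\intercal$ and using the identity $(1\ x^\intercal)\,\Mat(q)\,(1\ x^\intercal)^\intercal=q(x)$ from Eq.~\eqref{liftmatrix}. Your closing remark that only the one-way implication holds, since copositivity also quantifies over the homogeneous directions $y=(0,z^\intercal)^\intercal$ which the argument never uses, is accurate as well.
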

and we introduce the following matrices: 
\begin{subequations}
\begin{gather}
\forall\, i\in \ind,\ E^i=\bHom{T^i,c^i}\enspace ,\\
\forall\, (i,j)\in\ind^2,\ E^{ij}=\bHom{\begin{pmatrix}T^i\\ T^jA^i\end{pmatrix},\begin{pmatrix}c^i\\ c^j-T^jb^i\end{pmatrix}}\enspace ,\\
\forall\, i\in\In,\ E^{i0}=\bHom{\begin{pmatrix}T^i\\ T^0\end{pmatrix},\begin{pmatrix}c^i\\ c^0\end{pmatrix}}\enspace.
\end{gather}
\end{subequations}
\begin{lemma}
\label{overpol}
For all $i\in\ind$,
$X^i\subseteq \{x\mid E^i (1\ x^\intercal)^\intercal \geq 0\}$, for all $(i,j)\in\Sw$, $X^{ij}\subseteq \{x\mid E^{ij} (1\ x^\intercal)^\intercal \geq 0\}$
and for all $i\in\In$, $X^{i0}\subseteq \{x\mid E^{i0} (1\ x^\intercal)^\intercal \geq 0\}$.
\end{lemma}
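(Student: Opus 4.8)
The plan is to reduce all three inclusions to the (unlabelled) lemma asserting that $Px\leq c\iff \bHom{P,c}\ux\geq 0$, the only extra ingredient being that the mixed strict/weak systems defining the polyhedra can be relaxed to purely weak ones without discarding any point of the set. Concretely, the first observation I would record is that for vectors $y\ineq z$ one has in particular $y\leq z$, since every strict inequality implies its weak counterpart; hence membership in any polyhedron written with the $\ineq$ relation entails the corresponding weak system $Px\leq c$ to which the lemma applies.

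For the first inclusion, I would take $x\in X^i$. By definition $T^ix\ineq c^i$, so $T^ix\leq c^i$, and applying the lemma with $(P,c)=(T^i,c^i)$ gives $E^i\ux=\bHom{T^i,c^i}\ux\geq 0$. For the second, let $x\in X^{ij}=X^i\cap {f^i}^{-1}(X^j)$. Membership in $X^i$ yields $T^ix\ineq c^i$, while $f^i(x)=A^ix+b^i\in X^j$ yields $T^j(A^ix+b^i)\ineq c^j$, i.e. $T^jA^ix\ineq c^j-T^jb^i$. Stacking these two systems and relaxing $\ineq$ to $\leq$ gives $\begin{pmatrix}T^i\\ T^jA^i\end{pmatrix}x\leq\begin{pmatrix}c^i\\ c^j-T^jb^i\end{pmatrix}$, so the lemma produces $E^{ij}\ux\geq 0$. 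The third inclusion is identical in spirit: for $x\in X^{i0}=X^i\cap X^0$ one gets $T^ix\ineq c^i$ and $T^0x\ineq c^0$, whose weak relaxation is exactly the system whose homogenised form is $\bHom{\begin{pmatrix}T^i\\ T^0\end{pmatrix},\begin{pmatrix}c^i\\ c^0\end{pmatrix}}=E^{i0}$, and the lemma concludes $E^{i0}\ux\geq 0$.

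The only delicate point---and the reason the statement asserts inclusions rather than equalities---is the presence of strict inequalities in the $\ineq$ relation. The lemma only characterises weak systems $Px\leq c$, so replacing each strict inequality by its weak form enlarges the set on the right; this is harmless here because we only need an overapproximation, and it is in fact unavoidable, since the homogenised construction built on $\bHom{\cdot,\cdot}$ sees only non-strict constraints. I would finally remark that the hypotheses $(i,j)\in\Sw$ and $i\in\In$ play no role in the argument---the inclusions hold for arbitrary indices---and serve only to single out the matrices that will be relevant in the subsequent analysis.
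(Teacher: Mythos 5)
Your proof is correct and is precisely the argument the paper intends: Lemma~\ref{overpol} is stated there without proof as an immediate consequence of the preceding unlabelled lemma $Px\leq c\iff \bHom{P,c}\ux\geq 0$, applied to the stacked constraint systems after relaxing each strict inequality to its weak counterpart, exactly as you do. Your two side remarks---that the strict-to-weak relaxation is why one only gets inclusions, and that the restrictions to $\Sw$ and $\In$ play no role in the argument---are both accurate.
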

\begin{definition}[PQL functions]
\label{piecewisequad}
A function $L$ is a piecewise quadratic Lyapunov function (PQL for short) for $\pws$ if and only if there exist a family $\{(P^i,q^i),P^i\in\Sy{d},q^i\in\rd,\ i\in \ind\}$ and two reals $\alpha$ and $\beta$ such that:
\begin{enumerate}
\item $\forall\, i\in \ind$, $\forall\, x\in X^i$, $L(x)=L^i(x)=x^\intercal P^i x+2x^\intercal q^i$;
\item $\forall\, i\in \ind$:
\begin{equation}
\label{boundedeq}
\Mat(P^i,2q^i,-\alpha)-\Mat(\Id,0,-\beta)\in\ccop{d+1}{E^i}\enspace ;
\end{equation}
\item $\forall\, (i,j)\in \Sw$:
\begin{equation}
\label{stabilityeq}
\Mat(P^i,2q^i,0)-{F^i}^\intercal \Mat(P^j,2q^j,0) F^i\in\ccop{d+1}{E^{ij}}\enspace ;
\end{equation}
\item $\forall\, i\in \In$:
\begin{equation}
\label{initialeq}
-\Mat(P^i,2q^i,-\alpha)\in\ccop{d+1}{E^{i0}}\enspace .
\end{equation}
\end{enumerate}
\end{definition}
\begin{theorem}[Bounded trajectories]
\label{boundedth}
Assume that $\pws$ admits a PQL function characterized by $\{(P^i,q^i),P^i\in\Sy{d},q^i\in\rd,\ i\in \ind\}$ and reals $\alpha$ and $\beta$.
Let $i\in\ind$, $S_{\alpha}^i=\{x\in X^i\mid L^i(x)\leq \alpha\}=\{x\in X^i\mid x^\intercal P^i x+2x^\intercal q^i\leq \alpha\}$ and 
$S=\cup_{i\in \ind} S_{\alpha}^i$. Then, $\rea\subseteq S\subseteq \{x\in\rd\mid \norm{x}_2^2\leq \beta\}$.
\end{theorem}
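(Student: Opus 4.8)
The plan is to prove the two inclusions $\rea\subseteq S$ and $S\subseteq\{x\mid\norm{x}_2^2\leq\beta\}$ separately, since the first expresses invariance/reachability and the second is a purely static consequence of the boundedness condition. In both cases the engine is identical: every matrix constraint in Definition~\ref{piecewisequad} is a cone-copositivity statement, and Lemma~\ref{polyhedralcopo} together with the polyhedral overapproximations of Lemma~\ref{overpol} turns it into a \emph{pointwise} quadratic inequality holding on the relevant cell. The only preliminary bookkeeping is to read off, using linearity of $q\mapsto\Mat(q)$ and the identity~\eqref{lemmasimple}, which quadratic each lift-matrix difference encodes: $\Mat(P^i,2q^i,c)$ represents $y\mapsto L^i(y)+c$, the matrix $\Mat(\Id,0,-\beta)$ represents $y\mapsto\norm{y}_2^2-\beta$, and by~\eqref{lemmasimple} the term ${F^i}^\intercal\Mat(P^j,2q^j,0)F^i$ represents $y\mapsto L^j(f^i(y))$.

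First I would dispatch the norm bound $S\subseteq\{x\mid\norm{x}_2^2\leq\beta\}$, which uses no reachability. Fix $x\in S$; by the partition property~\eqref{partition} it lies in a unique cell $X^i$, and membership in $S$ then forces $x\in S_{\alpha}^i$, i.e. $L^i(x)\leq\alpha$. The boundedness inequality~\eqref{boundedeq} asserts that $\Mat(P^i,2q^i,-\alpha)-\Mat(\Id,0,-\beta)$ is $E^i$-copositive, and this matrix encodes $y\mapsto L^i(y)-\norm{y}_2^2+\beta-\alpha$. By Lemma~\ref{polyhedralcopo} and Lemma~\ref{overpol} this quadratic is nonnegative on all of $X^i$, so at $x$ we obtain $\norm{x}_2^2\leq L^i(x)+\beta-\alpha\leq\beta$.

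Next I would prove $\rea\subseteq S$ by induction on $k$, showing ${\map{}}^k(X^0)\subseteq S$. For $k=0$, any $x\in X^0$ lies in a unique $X^i$, hence in $X^{i0}$ with $i\in\In$; the initial condition~\eqref{initialeq} makes $-\Mat(P^i,2q^i,-\alpha)$ $E^{i0}$-copositive, encoding $y\mapsto\alpha-L^i(y)$, nonnegative on $X^{i0}$ by Lemmas~\ref{polyhedralcopo} and~\ref{overpol}; thus $L^i(x)\leq\alpha$ and $x\in S_\alpha^i\subseteq S$. For the step, take a reachable $x\in{\map{}}^k(X^0)\subseteq S$ in its unique cell $X^i$ (so $L^i(x)\leq\alpha$), and let $j$ be the unique index with $f^i(x)\in X^j$. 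Then $x\in X^{ij}$ with $x$ reachable, so $(i,j)\in\Sw$ and the stability inequality~\eqref{stabilityeq} applies: the $E^{ij}$-copositive matrix $\Mat(P^i,2q^i,0)-{F^i}^\intercal\Mat(P^j,2q^j,0)F^i$ encodes $y\mapsto L^i(y)-L^j(f^i(y))$, nonnegative on $X^{ij}$. Hence $L^j(f^i(x))\leq L^i(x)\leq\alpha$, and since $f^i(x)=\map{}(x)\in X^j$ we get $\map{}(x)\in S_\alpha^j\subseteq S$, which places ${\map{}}^{k+1}(X^0)$ in $S$ and closes the induction.

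The matrix-to-quadratic translation is routine. The step that deserves genuine care, and which I expect to be the main obstacle, is the \emph{cell selection}: at each stage one must certify that $x$ actually lies in the polyhedron ($X^i$, $X^{i0}$, or $X^{ij}$) whose copositivity constraint is being invoked, and in the inductive step one must argue that a reachable $x$ with $f^i(x)\in X^j$ genuinely yields a pair $(i,j)\in\Sw$, so that~\eqref{stabilityeq} is in fact part of the PQL data rather than a vacuous constraint. The uniqueness of the active cell guaranteed by the partition axiom~\eqref{partition} is precisely what makes these selections unambiguous and lets the three copositivity families chain together along a trajectory.
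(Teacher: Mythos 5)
Your proposal is correct and follows essentially the same route as the paper's proof: the same split into the static inclusion $S\subseteq\{x\mid\norm{x}_2^2\leq\beta\}$ via~\eqref{boundedeq} and the induction on $k$ for $\rea\subseteq S$ via~\eqref{initialeq} and~\eqref{stabilityeq}, each time converting the cone-copositivity constraints into pointwise inequalities through Lemma~\ref{polyhedralcopo} and Lemma~\ref{overpol}. Your explicit justification that a reachable $x\in X^{ij}$ forces $(i,j)\in\Sw$ is a point the paper leaves implicit, and is a welcome clarification rather than a deviation.
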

\begin{proof}
First, we prove that $S\subseteq \{x\in\rd\mid \norm{x}_2^2\leq \beta\}$. Let $i\in \ind$ and $x\in X^i$. From Eq.~\eqref{boundedeq}, Lemma~\ref{polyhedralcopo} and Lemma~\ref{overpol}, 
$x^\intercal P^i x+2x^\intercal q^i-\alpha-\norm{x}_2^2+\beta\geq 0$. This is equivalent to $\beta-\norm{x}_2^2\geq \alpha-x^\intercal P^i x-2x^\intercal q^i$ which implies that 
$S\subseteq \{x\in\rd\mid \norm{x}_2^2\leq \beta\}$.

Now, we have to prove $\rea\subseteq S$. From Eq.~\eqref{reach}, we have to prove that for all $k\in \nn$, $\map{k}(X^0)\subseteq S$. We prove it by induction on $k$. 
Let $x\in X^0$. Since $\state$ satisfies~\eqref{partition}, there exists a unique $i\in\In$ such that $x_0\in X^{i0}$. From Eq.~\eqref{initialeq}, Lemma~\ref{polyhedralcopo} and Lemma~\ref{overpol},
$L^i(x)\leq \alpha$. Now suppose $\map{k}(X^0)\subseteq S$ for some $k\in\nn$. Let $y\in\map{k+1}(X^0)$. Then $y=\map{}(x)$ for some $x\in\map{k}(X^0)$. 
Since $\state$ satisfies~\eqref{partition}, there exists an unique $(i,j)\in\Sw$ such that $x\in X^{ij}$ (hence $y \in X^j$). As $x\in X^i$ and $x\in S$, then $x\in S_\alpha^i$.
From Eq.~\eqref{stabilityeq}, Lemma~\ref{polyhedralcopo} and Lemma~\ref{overpol}, $0\leq L^i(x)-L^j(y)=L^i(x)-\alpha-(L^j(y)-\alpha)$. As $x\in S_\alpha^i$, $0\geq L^i(x)-\alpha$ which implies that $0\geq L^j(y)-\alpha$ and finally $y\in S_\alpha^j\subseteq S$.     
\end{proof}    
\subsection{Computational issues}
To construct PQL functions, we are faced with two issues. First, we must know the sets of indices $\Sw$ and $\In$.
Second we have to manipulate cone-copositive constraints. 
\subsubsection{The computation of sets $\Sw$ and $\In$}
To set $\Sw$ is defined from $\rea$, the set which we want approximate. To overcome this issue, we consider a bigger set 
by removing the intersection with $\rea$:
\begin{equation}
\label{newswitches}
\overline{\Sw}:=\{(i,j)\in\ind^2\mid X^{ij}\neq \emptyset\}\enspace .
\end{equation} 
Since $X^i$ and $X^j$ can contain strict inequalities, we can use alternative theorems such as Motzkin's theorem~\cite{Motzkin}
to compute $\overline{\Sw}$. Note that we use this technique based LP to determine exactly $\In$. 

The direct application of Motzkin's transposition theorem~\cite{Motzkin} yields to the next proposition.
\begin{proposition}
\label{propSwitches}
Let $n_{ij}^s$ (resp. $n_{ij}^w$) be the number of strict (resp. weak) inequalities in $X^i\cap X^j$. The couple $(i,j)\in\overline{\Sw}$ if and only if:
\[
\left\{
\begin{array}{l}
\displaystyle{
\begin{pmatrix}
1 & & 0_{1\times d}\\
c_s^i & &-T_s^i\\
c_s^j-T_s^j b^i& &-T_s^jA^i
\end{pmatrix}^\intercal p^s+\begin{pmatrix}
c_w^i& & -T_w^i\\
c_w^j-T_w^j b^i & &-T_w^j
\end{pmatrix}^\intercal p=0}\\
\\
\displaystyle{\sum_{k=1}^{n_{ij}^s+1} p_k^s=1},\ p^s\geq 0,\ p\geq 0
\end{array} 
\right.
\]
has no solution.

Let $n_{i0}^s$ (resp. $n_{i0}^w$) be the number of strict (resp. weak) inequalities in $X^i\cap X^0$. The index $i\in\In$ if and only if:
\[
\left\{
\begin{array}{l}
\begin{pmatrix}
1 & & 0_{1\times d}\\
c_s^i & &-T_s^i\\
c_s^0& &-T_s^0
\end{pmatrix}^\intercal p^s+\begin{pmatrix}
c_w^i& & -T_w^i\\
c_w^0& &-T_w^0
\end{pmatrix}^\intercal p=0\\
\\
\displaystyle{\sum_{k=1}^{n_{i0}^s+1} p_k^s=1,\ p^s\geq 0,\ p\geq 0}
\end{array} 
\right.
\]
has no solution.
\end{proposition}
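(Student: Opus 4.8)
The plan is to reduce the nonemptiness of $X^{ij}$ to the \emph{feasibility} of a homogeneous system of mixed strict/weak linear inequalities, and then to apply Motzkin's transposition theorem in its alternative form to that system. The only delicate point is the bookkeeping of a homogenising scalar, which is exactly why the strict block carries one extra row and the multiplier $p^s$ ranges over $n_{ij}^s+1$ coordinates.

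First I would homogenise. Since $x\in X^{ij}=X^i\cap {f^i}^{-1}(X^j)$ means $T^i x\ineq c^i$ together with $T^j A^i x\ineq c^j-T^j b^i$, a point $x$ lies in $X^{ij}$ iff it satisfies the strict inequalities $T_s^i x<c_s^i$, $T_s^j A^i x<c_s^j-T_s^j b^i$ and the weak inequalities $T_w^i x\leq c_w^i$, $T_w^j A^i x\leq c_w^j-T_w^j b^i$. Introducing a scalar $\lambda$ and the vector $y=(\lambda,\xi)\in\rr^{1+d}$, I claim that $X^{ij}\neq\emptyset$ iff the homogeneous system
\[
\begin{pmatrix}1 & 0_{1\times d}\\ c_s^i & -T_s^i\\ c_s^j-T_s^j b^i & -T_s^j A^i\end{pmatrix} y>0,
\qquad
\begin{pmatrix} c_w^i & -T_w^i\\ c_w^j-T_w^j b^i & -T_w^j A^i\end{pmatrix} y\geq 0
\]
is feasible. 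Indeed, if $x\in X^{ij}$ then $y=\uxt^\intercal$ works, the first strict row giving $\lambda=1>0$; conversely, the first strict row forces $\lambda>0$ on any solution, so $x:=\xi/\lambda$ is well defined, and dividing every inequality by $\lambda$ places $x$ in $X^{ij}$. The sole purpose of the extra row $(1,\ 0_{1\times d})$ is to enforce $\lambda>0$, i.e.\ to recover a genuine point rather than a recession direction; this row is precisely the $+1$ in the index range of $p^s$, so that the strict block has $n_{ij}^s+1$ rows while the weak block has $n_{ij}^w$ rows.

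Second, I would invoke Motzkin's transposition theorem in the form: a mixed system (strict part $B$, weak part $C$) $By>0,\ Cy\geq 0$ is feasible iff the alternative system $B^\intercal p^s+C^\intercal p=0$, $p^s\geq 0$, $p\geq 0$, $p^s\neq 0$ has \emph{no} solution. Taking $B$ and $C$ to be the strict and weak matrices above and writing down this alternative gives exactly the transposed matrices appearing in the statement. It then remains to normalise the strict multiplier: the dual equation $B^\intercal p^s+C^\intercal p=0$ is homogeneous of degree one in $(p^s,p)$, and for $p^s\geq 0$ the condition $p^s\neq 0$ is equivalent to $\sum_{k=1}^{n_{ij}^s+1}p_k^s=1$ (rescale by $1/\sum_k p_k^s>0$). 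This yields precisely the stated infeasibility condition, hence the characterisation of $\overline{\Sw}$.

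Finally, the characterisation of $\In$ follows verbatim by repeating the argument with $X^{i0}=X^i\cap X^0$ in place of $X^{ij}$: one replaces the pulled-back constraints $T^j A^i x\ineq c^j-T^j b^i$ by the initial-condition constraints $T^0 x\ineq c^0$, so that no multiplication by $A^i$ occurs, and the same homogenisation plus the same use of Motzkin's theorem produce the second system with $p^s\in\rr^{n_{i0}^s+1}$. I expect the main obstacle to be organisational rather than analytical: one must keep the strict and weak rows cleanly separated so that Motzkin's nonzero-multiplier condition falls exactly on the strict block (including the $\lambda>0$ row), and verify that the normalisation $\sum_k p_k^s=1$ faithfully encodes $p^s\neq 0$.
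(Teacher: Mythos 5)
Your proof is correct and takes essentially the same route as the paper, which offers no separate argument beyond stating that the proposition is a direct application of Motzkin's transposition theorem: your homogenisation with the extra strict row $(1,\ 0_{1\times d})$ forcing the scalar $\lambda>0$ (which also guarantees the strict block is nonempty, as Motzkin requires), followed by the normalisation $\sum_{k=1}^{n_{ij}^s+1} p_k^s=1$ encoding $p^s\neq 0$ by homogeneity of the dual system, is precisely that application. One small remark: your weak block correctly carries $-T_w^jA^i$ (from the pulled-back constraint $T_w^jA^ix\leq c_w^j-T_w^jb^i$), whereas the proposition as printed shows $-T_w^j$, an apparent typo in the paper, so your claim that the alternative matches the statement \emph{exactly} holds only after this correction.
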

\subsubsection{Cone-copositive constraints}
Cone-copositive matrix characterizations is an intensive research field and a list of interesting papers about can be found 
in~\cite{SurveyCopositive}. 
\begin{proposition}[Th. 2.1 of~\cite{Martin1981227}]
\label{conecopositive}
Let $M\in\mt{m}{d}$. Then:
\begin{equation}
\label{eqcop}
\tag{$\Delta$}
\{M^\intercal C M+S\mid C\in\cop{d}\text{ and } S\in \Sdp{d}\}\subseteq \ccop{d}{M}
\end{equation}
If the rank of $M$ is equal to $m$, then~\eqref{eqcop} is actually an equality. 
\end{proposition}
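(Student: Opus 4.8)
The inclusion in~\eqref{eqcop} is immediate and holds without any rank hypothesis, so I would dispatch it first. Given a copositive $C$ (for the products to be well defined $C$ must be $m\times m$, i.e.\ $C\in\cop{m}$) and $S\in\Sdp{d}$, take any $y\in\rd$ with $My\geq 0$ and set $z=My\in\rr^m$; then $z\geq 0$ and
\[
y^\intercal\mybrackets{M^\intercal C M+S}y=z^\intercal C z+y^\intercal S y\geq 0,
\]
since $z^\intercal C z\geq 0$ by copositivity of $C$ and $y^\intercal S y\geq 0$ because $S\succeq 0$. Hence $M^\intercal C M+S\in\ccop{d}{M}$, which settles the set inclusion.

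For the converse under $\operatorname{rank}(M)=m$, the plan is to normalise $M$ by a linear change of variables. Full row rank $m$ (which forces $m\leq d$) yields a right inverse $R\in\mt{d}{m}$ with $MR=\Id_m$ and a matrix $N\in\mt{d}{(d-m)}$ whose columns span $\ker M$; then $P=(R\mid N)$ is invertible and $MP=(\Id_m\mid 0_{m\times(d-m)})$. Substituting $y=Pz$ converts the constraint $My\geq 0$ into the requirement that the first $m$ coordinates of $z$ be nonnegative, so that $M$-copositivity of $Q$ is exactly nonnegativity of $\tilde Q:=P^\intercal Q P$ on the partial orthant $\{(u,v)\mid u\in\rr^m,\ u\geq 0,\ v\in\rr^{d-m}\}$. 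Applying the same substitution to the target, any decomposition $\tilde Q=\begin{psmallmatrix}C&0\\0&0\end{psmallmatrix}+\tilde S$ with $C\in\cop{m}$ and $\tilde S\succeq 0$ transports back, via $(MP)^\intercal C(MP)=\begin{psmallmatrix}C&0\\0&0\end{psmallmatrix}$, to $Q=M^\intercal C M+S$ with $S=(P^{-1})^\intercal\tilde S P^{-1}\succeq 0$. Surjectivity of $M$ is precisely what allows the normalisation $MP=(\Id_m\mid 0)$; without full row rank this clean form is unavailable and the copositive factor could only be required to be nonnegative on a proper subcone of $\rr^m_{\geq 0}$, which need not extend to a genuinely copositive matrix.

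It then remains to construct the decomposition of $\tilde Q=\begin{psmallmatrix}A&B\\B^\intercal&D\end{psmallmatrix}$ in the reduced coordinates. Setting $u=0$ forces $D\succeq 0$, and boundedness below of $u^\intercal A u+2u^\intercal B v+v^\intercal D v$ in the free variable $v$ forces the range condition $\Im(B^\intercal)\subseteq\Im(D)$. Under these two conditions, minimising over $v$ for fixed $u\geq 0$ gives the value $u^\intercal\hat Q u$, where $\hat Q:=A-B D^{+}B^\intercal$ is the (generalised) Schur complement, so $M$-copositivity of $Q$ is equivalent to $\hat Q\in\cop{m}$. I would then simply put $C:=\hat Q$ and $\tilde S:=\tilde Q-\begin{psmallmatrix}C&0\\0&0\end{psmallmatrix}=\begin{psmallmatrix}B D^{+}B^\intercal&B\\B^\intercal&D\end{psmallmatrix}$; the latter is positive semidefinite exactly because its Schur complement vanishes while $D\succeq 0$ and the range condition hold, and transporting back through $P$ produces the required $C$ and $S$. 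The main obstacle is the degenerate case where $D$ is singular: the whole Schur-complement step must be run with the Moore--Penrose pseudoinverse, and the positive-semidefiniteness criterion for the block form has to be invoked in its generalised, rank-respecting version rather than the textbook invertible-pivot one. If one is willing to assume $D\succ 0$ (after a further reduction), the argument reduces to ordinary Schur complements and becomes routine.
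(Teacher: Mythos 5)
Your proof is correct, but note that there is no in-paper proof to compare against: Proposition~\ref{conecopositive} is imported verbatim from Th.~2.1 of Martin and Jacobson~\cite{Martin1981227}, so your argument is in effect a self-contained reconstruction of that cited result, and it holds up. The easy inclusion is exactly as you give it, and you are right that the paper's $C\in\cop{d}$ is a typo for $C\in\cop{m}$ (otherwise $M^\intercal C M$ is not even well formed). For the converse, your normalisation is valid: with $R=M^\intercal(MM^\intercal)^{-1}$ a right inverse and $N$ a basis of $\ker M$, the matrix $P=(R\mid N)$ is invertible and $MP=(\Id_m\mid 0)$, so $M$-copositivity of $Q$ transfers to nonnegativity of $\tilde Q=P^\intercal QP$ on the partial orthant $\{(u,v)\mid u\geq 0\}$, and the congruence $\begin{psmallmatrix}C&0\\0&0\end{psmallmatrix}=(MP)^\intercal C\,(MP)$ transports the reduced decomposition back to $Q=M^\intercal CM+S$ with $S=(P^{-1})^\intercal\tilde S P^{-1}\succeq 0$. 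The degenerate Schur-complement step also checks out: $u=0$ forces $D\succeq 0$; boundedness below in $v$ for each fixed $u\geq 0$ forces $B^\intercal u\in\Im(D)$, and since the cone $\{u\geq 0\}$ spans $\rr^m$ this gives $\Im(B^\intercal)\subseteq\Im(D)$; under these two conditions $\inf_v$ of the form equals $u^\intercal(A-BD^{+}B^\intercal)u$, so $C:=A-BD^{+}B^\intercal\in\cop{m}$, and $\tilde S=\begin{psmallmatrix}BD^{+}B^\intercal&B\\B^\intercal&D\end{psmallmatrix}\succeq 0$ by the generalised (Albert) criterion --- psd pivot, range condition, vanishing generalised Schur complement --- which is precisely the rank-respecting version you correctly flag as indispensable when $D$ is singular. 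Beyond the bare citation, your write-up has the merit of isolating exactly where full row rank enters (the normalisation $MP=(\Id_m\mid 0)$), which is also where the inclusion can become strict for rank-deficient $M$.
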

The next proposition discusses simple a characterization of copositive matrices as a sum of a semi-definite positive matrix and a nonnegative matrix. 
\begin{proposition}[~\cite{PSP:2051276,PEM:3071212}]
\label{copositivecha}
We have: $\forall\, d\in\nn$: $\Sp{d}+\Sdp{d}\subseteq \cop{d}$. If $d\leq 4$ then $\cop{d}=\Sp{d}+\Sdp{d}$.
\end{proposition}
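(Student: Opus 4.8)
\emph{The inclusion $\Sp{d}+\Sdp{d}\subseteq\cop{d}$.} This half holds in every dimension and is immediate. Take $Q=N+P$ with $N\in\Sp{d}$ and $P\in\Sdp{d}$, and let $y\in\rd$ satisfy $y\geq 0$. Then
\[
y^\intercal Q y=\sum_{k,l\in[d]}N_{kl}\,y_k y_l+y^\intercal P y\enspace .
\]
Every term of the first sum is nonnegative because $N_{kl}\geq 0$ and $y_k,y_l\geq 0$, and the second summand is nonnegative because $P\succeq 0$. Hence $y\geq 0\implies y^\intercal Q y\geq 0$, which is exactly $Q\in\cop{d}$, copositivity being $\Id_{d}$-copositivity. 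This already settles the first assertion for all $d\in\nn$.

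\emph{The equality for $d\leq 4$.} The content is the reverse inclusion $\cop{d}\subseteq\Sp{d}+\Sdp{d}$, which is Diananda's theorem; I would argue by induction on $d$. For $d=1$ a copositive matrix is a scalar $a\geq 0$, trivially in $\Sp{1}+\Sdp{1}$. For $d=2$ one checks the dichotomy by hand: writing $Q=\begin{psmallmatrix}a&b\\ b&c\end{psmallmatrix}$, copositivity forces $a,c\geq 0$ and $b\geq-\sqrt{ac}$; if $b\geq 0$ then $Q\in\Sp{2}$, whereas if $b<0$ then $b^2\leq ac$ together with $a\geq 0$ gives $Q\succeq 0$, i.e. $Q\in\Sdp{2}$. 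Thus for $d\leq 2$ a copositive matrix is already \emph{either} entrywise nonnegative \emph{or} positive semidefinite.

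For the inductive step ($d\in\{3,4\}$) the main tool is a complementarity lemma: if $Q\in\cop{d}$ admits a nonzero $u\geq 0$ with $u^\intercal Q u=0$, then expanding $(u+tv)^\intercal Q(u+tv)\geq 0$ for $v\geq 0$, dividing by $t>0$ and letting $t\downarrow 0$ yields $v^\intercal Q u\geq 0$ for all $v\geq 0$, hence $Qu\geq 0$; since $u^\intercal Q u=0$ with $u\geq 0$ and $Qu\geq 0$, complementary slackness gives $(Qu)_i=0$ on the support of $u$. If $Q$ is positive semidefinite we are done; otherwise $Q$ sits on the boundary of $\cop{d}$ and such a zero $u$ exists. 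The plan is to use $u$ and the relation $Qu\geq 0$ to peel off a rank-one positive semidefinite piece along $u$ and to restrict the remainder to a principal submatrix of order $d-1$, which is again copositive, so that the induction hypothesis applies; reassembling then places $Q$ in $\Sp{d}+\Sdp{d}$.

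The hard part is precisely this reassembly, and it is where the bound $d\leq 4$ enters essentially: the recombination of the lower-dimensional positive semidefinite and nonnegative pieces into a global decomposition closes only because, for $d\leq 4$, the copositive cone has no extreme rays beyond the rank-one squares $vv^\intercal$ and the elementary nonnegative matrices. For $d\geq 5$ this fails—the Horn matrix is copositive with no such decomposition—so the equality is sharp. As a conceptual cross-check one may dualize: $\cop{d}$ and $\Sp{d}+\Sdp{d}$ are closed convex cones, $\cop{d}^\ast$ is the completely positive cone, and $(\Sp{d}+\Sdp{d})^\ast=\Sp{d}\cap\Sdp{d}$ is the cone of doubly nonnegative matrices, so the claimed equality is equivalent to every doubly nonnegative matrix of order $\leq 4$ being completely positive, the dual form of Diananda's theorem.
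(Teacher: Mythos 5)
Your proof of the first assertion ($\Sp{d}+\Sdp{d}\subseteq\cop{d}$, valid for every $d$) is complete and correct, and your $d\leq 2$ base cases and the complementarity lemma ($u\geq 0$, $u^\intercal Qu=0$ imply $Qu\geq 0$ and $(Qu)_i=0$ on the support of $u$) are sound. But for the substantive direction $\cop{d}\subseteq\Sp{d}+\Sdp{d}$ with $d\in\{3,4\}$ there is a genuine gap, and you in effect flag it yourself: the ``peel off a rank-one piece and reassemble'' step is announced as a plan, not carried out, and the justification you offer for why it closes --- that for $d\leq 4$ the extreme rays of $\cop{d}$ are exactly the matrices $vv^\intercal$ and the elementary symmetric nonnegative matrices --- is logically equivalent to the inclusion you are trying to prove (a closed pointed convex cone is the conic hull of its extreme rays, so that extreme-ray description of $\cop{d}$ \emph{is} the statement $\cop{d}\subseteq\Sp{d}+\Sdp{d}$). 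Invoking it makes the argument circular. There is also a false intermediate claim: ``if $Q$ is not positive semidefinite then $Q$ sits on the boundary of $\cop{d}$'' is wrong --- e.g.\ $Q=\begin{psmallmatrix}1&2\\2&1\end{psmallmatrix}$ is strictly copositive (interior of $\cop{2}$) yet indefinite. The standard proofs first subtract a suitable nonnegative or rank-one part, or minimize $y^\intercal Qy$ over the standard simplex and split according to the support of the minimizer, precisely to manufacture the boundary/zero situation your induction needs; without that reduction the inductive step does not even start for interior points. Your dual cross-check (equivalence with ``doubly nonnegative $=$ completely positive for $d\leq 4$'') is correct but is again a restatement, not a proof.

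For calibration: the paper offers no proof of this proposition at all --- it is quoted from the literature (Diananda, and Hall--Newman for the sharpness at $d=5$ via the Horn matrix, which you correctly cite). So the honest comparison is that your write-up proves the easy inclusion, correctly identifies the classical tools and the reason the bound $d\leq 4$ is sharp, but does not constitute a self-contained proof of the $d\leq 4$ equality; to repair it you would need to actually execute Diananda's case analysis (reduction to the boundary, the support-splitting of the simplex minimizer, and the explicit recombination for orders $3$ and $4$) rather than appeal to the extreme-ray description.
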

\begin{corollary}
\label{copositiverestriction}
Let $M\in\mt{m}{d}$. Then:
\begin{equation}
\label{eqcorollary}\tag{$\star$}
\ccop{d}{M}\supseteq\left\{Q\in\Sy{d}\left| \begin{array}{c}\exists\, \pos{W}\in\Sp{m},\ \psd{W} \in \Sdp{m},\
 \st\\ Q-M^\intercal \left(\pos{W}+\psd{W}\right) M\succeq 0\end{array}\right\}\right.
\end{equation}
If $M$ has full row rank and $d\leq 4$, then~\eqref{eqcorollary} is actually an equality.
\end{corollary}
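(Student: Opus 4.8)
The plan is to obtain both directions as a direct composition of Propositions~\ref{conecopositive} and~\ref{copositivecha}, the only delicate point being that the copositive and nonnegative factors live in dimension $m$ rather than $d$.

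First I would prove the inclusion $\supseteq$, which needs no hypothesis on $M$. Take $Q\in\Sy{d}$ together with $\pos{W}\in\Sp{m}$ and $\psd{W}\in\Sdp{m}$ such that $S:=Q-M^\intercal\mybrackets{\pos{W}+\psd{W}}M\succeq 0$. Put $C:=\pos{W}+\psd{W}$. Since $\pos{W}\in\Sp{m}$ and $\psd{W}\in\Sdp{m}$, the inclusion $\Sp{m}+\Sdp{m}\subseteq\cop{m}$ of Proposition~\ref{copositivecha}, read in dimension $m$, gives $C\in\cop{m}$. Hence $Q=M^\intercal C M+S$ with $C\in\cop{m}$ and $S\in\Sdp{d}$, so~\eqref{eqcop} of Proposition~\ref{conecopositive} places $Q$ in $\ccop{d}{M}$.

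For the reverse inclusion I would switch on the two extra hypotheses. Let $Q\in\ccop{d}{M}$. Because $M$ has full row rank, its rank equals $m$, so~\eqref{eqcop} is an equality by Proposition~\ref{conecopositive}, and there exist $C\in\cop{m}$ and $S\in\Sdp{d}$ with $Q=M^\intercal C M+S$. Full row rank forces $m\leq d$, and together with $d\leq 4$ this yields $m\leq 4$; therefore the equality $\cop{m}=\Sp{m}+\Sdp{m}$ of Proposition~\ref{copositivecha} applies in dimension $m$, and I may write $C=\pos{W}+\psd{W}$ with $\pos{W}\in\Sp{m}$ and $\psd{W}\in\Sdp{m}$. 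Then $Q-M^\intercal\mybrackets{\pos{W}+\psd{W}}M=Q-M^\intercal C M=S\succeq 0$, which exhibits $Q$ in the right-hand side of~\eqref{eqcorollary}.

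No hard computation arises; the proof is a two-step substitution. The single place where the argument could go wrong is the dimension bookkeeping: the blocks $C,\pos{W},\psd{W}$ are $m\times m$, so the threshold $d\leq 4$ must be routed through the implication that full row rank gives $m\leq d\leq 4$ before Proposition~\ref{copositivecha} can be invoked in size $m$. Dropping full row rank breaks both equalities simultaneously, namely the Martin characterization and the reduction $m\leq 4$, which is precisely why the converse inclusion is claimed only under these two hypotheses.
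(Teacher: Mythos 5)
Your proof is correct and is precisely the intended derivation: the paper gives no separate proof of Corollary~\ref{copositiverestriction}, treating it as the immediate composition of Proposition~\ref{conecopositive} (Martin's characterization, with equality under full row rank) and Proposition~\ref{copositivecha} (the decomposition of copositive matrices), exactly as you do. Your dimension bookkeeping is the one point of substance and you handle it correctly: the copositive factor must live in $\cop{m}$ (which silently repairs the paper's misprinted $\cop{d}$ in~\eqref{eqcop}, since $M^\intercal C M$ with $M\in\mt{m}{d}$ forces $C$ to be $m\times m$), and full row rank gives $m\leq d\leq 4$ so that the equality case of Proposition~\ref{copositivecha} applies in size $m$.
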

Copositive constraints study is a quite recent field of research. Algorithms
exist (e.g.~\cite{Bundfuss:2009:ALA:1654367.1654377}) but for the knowledge of the author no tools are available. In this paper, in practice, we use Corollary~\ref{copositiverestriction} 
and we replace $\ccop{d}{M}$ by the right-hand side of Eq.~\eqref{eqcorollary}.
\comment{
Nevertheless, by taking $\Sp{m}+\Sdp{m}$, we allow symmetry. A way to break the symmetry is to simply add the trivial row vector with one on the first coefficient and 0 otherwise as we do in Def.~\ref{homogeneouscell}.
} 
\subsubsection{Computation of Piecewise quadratic Lyapunov functions using SDP solvers}
Finally, we construct PQL functions using semidefinite programming. We define the notion of computable 
PQL functions. 
\begin{definition}[Computable PQL functions]
\label{computablepiecewise}
A function $L$ is a computable PQL for to $\pws$ if and only if there exist two reals $\alpha$ and $\beta$ and four families:
 \begin{itemize}
 \item $\Pq:=\{(P^i,q^i),P^i\in\Sy{d},q^i\in\rd,\ i\in \ind\}$
\item $\mathcal{W}:=\{\left(\pos{W^i},\psd{W^i}\right)\in \Sp{n_i+1}\times \Sdp{n_i+1},i\in\ind\}$, 
\item $\mathcal{U}:=\{\left(\pos{U^{ij}},\psd{U^{ij}}\right)\in \Sp{n_{ij}}\times \Sdp{n_{ij}},(i,j)\in\overline{\Sw}\}$
\item $\mathcal{Z}:=\{\left(\pos{Z^{i0}},\psd{Z^{i0}}\right)\in \Sp{n_{i0}}\times \Sdp{n_{i0}},i\in\In\}$
\end{itemize}
such that:
\begin{enumerate}
\item $\forall\, i\in \ind$, $\forall\, x\in X^i$, $L(x)=L^i(x)=x^\intercal P^i x+2x^\intercal q^i$;
\item $\forall\, i\in \ind$:
\begin{equation}
\label{boundedeqrelax}
\begin{array}{rr}
\Mat(P^i,2q^i,-\alpha)-\Mat(\Id,0,-\beta)& \hphantom{ \succeq 0}\\
 \hphantom{\Mat(P^i,2q^i,-\alpha)}-{E^i}^\intercal \left(\pos{W^i}+\psd{W^i}\right) E^i & \succeq 0\enspace ;
\end{array}
\end{equation}
\item $\forall\, (i,j)\in \overline{\Sw}$:
\begin{equation}
\label{stabilityeqrelax}
\begin{array}{rr}
 \Mat(P^i,2q^i,0)-{F^i}^\intercal \Mat(P^j,2q^j,0) F^i & \hphantom{ \succeq 0}\\
  \hphantom{\Mat(P^i,2q^i,0) }-{E^{ij}}^\intercal \left(\pos{U^{ij}}+\psd{U^{ij}}\right) E^{ij}& \succeq 0\enspace ;
 \end{array}
\end{equation}
\item $\forall\, i\in \In$:
\begin{equation}
\label{initialeqrelax}
-\Mat(P^i,2q^i,-\alpha)-{E^{i0}}^\intercal \left(\pos{Z^{0i}}+\psd{Z^{0i}}\right) E^{i0}\succeq 0;
\end{equation}
\end{enumerate}
\end{definition} 
Let us consider the problem:
\begin{equation}
\label{SDPlyap}
\tag{PSD}
\begin{array}{cl}
\displaystyle{\inf_{\substack{\Pq,\mathcal{W},\mathcal{U},\mathcal{Z},\\ \alpha,\beta}}} & \alpha+\beta\\
\st & 
\left\{
\begin{array}{l}
(\Pq,\mathcal{W},\mathcal{U},\mathcal{Z},\alpha,\beta) \text{ satisfies}~\eqref{boundedeqrelax},~\eqref{stabilityeqrelax} \text{ and}
~\eqref{initialeqrelax}\\
\alpha\geq 0,\ \beta\geq 0
\end{array}
\right.
\end{array}
\end{equation}
Problem~\eqref{SDPlyap} is thus a semi-definite program.
The use of the sum $\alpha+\beta$ as objective function enforces the functions $L^i$s to provide a minimal bound $\beta$
and a minimal ellispoid containing the initial conditions. The constraint $\beta\geq 0$ is obvious since $\beta$ represents 
a norm. However, $\alpha\geq 0$ is less natural but ensures that the objective function is bounded from below. The presence of the constraint
$\alpha\geq 0$ does not affect the feasibility. Note that to reduce the size of the problem, we can take $q^i=0$ and get an homogeneous PQL function. 

Now, we can explain the motivation of $(1,0_{1\times d})$ in Eq.~\eqref{homogeneouspol}. It would be more natural to express $\bHom{P,c}$ as $(c \ -P)$. 
However, when we replace the cone-copositivity constraints by right-hand-side of Eq.~\eqref{eqcorollary} and by doing this we allow symmetry as it is shown
in Example~\ref{reason_of_one} and the vector $(1,0_{1\times d})$ aims to break it.
\begin{example}[Why is there $(1,0_{1\times d})$ in $\bHom{P,c}$?]
\label{reason_of_one}
Consider $X=\{x\in\rr\mid x\leq 1\}$. Let $u(x)=(1,x)$,
and $M=(1\ -1)$ ($\bHom{1,1}$ without $(1,0)$). Then $X=\{x\mid M u(x)^\intercal \geq 0\}$. 

Now let $W\geq 0$ and define $X'=\{x\mid u(x) M^\intercal W M u(x)^\intercal \geq 0\}$.
Since $u(x) M^\intercal W M u(x)^\intercal=W u(x)M^\intercal M u(x)^\intercal=2W(1-x)^2$, $X'=\rr$ for all $W\geq 0$.

Now let us take $E=\bHom{1,1}$ and let $W=\begin{psmallmatrix} w_1 & w_3 \\ w_3 & w_2\end{psmallmatrix}$ with $w_1,w_2,w_3\geq 0$ 
and define $\overline{X}=\{x\mid u(x) E^\intercal W E u(x)^\intercal \geq 0\}$.
Hence, $u(x) E^\intercal \begin{psmallmatrix} w_1 & w_3 \\ w_3 & w_2\end{psmallmatrix} E u(x)^\intercal=w_1+2w_3(1-x)+w_2(1-x)^2$.
Taking for example $w_2=w_1=0$ and $w_3>0$ implies that $\overline{X}=X$.
\end{example}
\begin{proposition}
\label{PQLprop}
Assume that Problem~\eqref{SDPlyap} has a feasible solution $(\Pq,\mathcal{W},\mathcal{U},\mathcal{Z},\alpha,\beta)$. Then:
\begin{enumerate}
\item The family $\Pq$ defines a PQL;
\item There exists $(\Pq,\mathcal{W},\mathcal{U},\mathcal{Z},\alpha,\beta)$ satisfiying~\eqref{boundedeqrelax},~\eqref{stabilityeqrelax}
and~\eqref{initialeqrelax} if and only if Problem~\eqref{SDPlyap} is feasible;
\item For all $(i,j)\in \overline{\Sw}$,
\[
\begin{array}{ll}
& {F^i}^\intercal\Mat(\Id,0,0)F^i\\
\preceq & \Mat(P^i,2q^i,-\alpha)+\Mat(0,0,\beta)\\
&-{E^{ij}}^\intercal\left( \begin{pmatrix}0_{n_i}  & 0_{n_i,n_j} \\
0_{n_j,n_i} & \pos{W^j}+\psd{W^j}\end{pmatrix}+\pos{U^{ij}}+\psd{U^{ij}}\right) E^{ij}\enspace ; 
\end{array}
\]
\item We have $\displaystyle{\sup_{x\in X^0} \norm{x}_2^2} \leq \beta$;
\item If $(\Pq,\mathcal{W},\mathcal{U},\mathcal{Z},\alpha,\beta)$ is optimal and $\alpha>0$ then $\displaystyle{\sup_{x\in X^0} L(x)}=\alpha$.
\end{enumerate}
\end{proposition}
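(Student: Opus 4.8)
```latex
\paragraph{Proof plan.}
The plan is to establish the five items essentially in the order they are listed, exploiting
the fact that the relaxed conditions~\eqref{boundedeqrelax}, \eqref{stabilityeqrelax}
and~\eqref{initialeqrelax} are precisely the right-hand side of Corollary~\ref{copositiverestriction}
instantiated at the matrices $E^i$, $E^{ij}$ and $E^{i0}$. First I would prove item~(1): given a
feasible $(\Pq,\mathcal{W},\mathcal{U},\mathcal{Z},\alpha,\beta)$, I would check that the family $\Pq$
together with $\alpha,\beta$ satisfies the three cone-copositivity requirements of
Definition~\ref{piecewisequad}. This is immediate from the inclusion~\eqref{eqcorollary}: for each
$i\in\ind$, the matrix $\pos{W^i}+\psd{W^i}\in\Sp{n_i+1}+\Sdp{n_i+1}$, so by
Corollary~\ref{copositiverestriction} the left-hand side of~\eqref{boundedeqrelax} lying in
$\Sdp{d+1}$ forces $\Mat(P^i,2q^i,-\alpha)-\Mat(\Id,0,-\beta)\in\ccop{d+1}{E^i}$, which is
exactly~\eqref{boundedeq}; the same argument applied to~\eqref{stabilityeqrelax}
and~\eqref{initialeqrelax} yields~\eqref{stabilityeq} and~\eqref{initialeq} (noting that
$\overline{\Sw}\supseteq\Sw$, so the stability constraints hold a fortiori on the smaller index set
$\Sw$). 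Item~(2) is then a tautological restatement: feasibility of~\eqref{SDPlyap} is by
definition the existence of a tuple satisfying the three relaxed inequalities together with
$\alpha,\beta\geq 0$, so the ``if and only if'' is immediate once one observes that adding the sign
constraints does not affect solvability, as remarked after the definition of~\eqref{SDPlyap}.

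For item~(3) I would combine~\eqref{boundedeqrelax} at index $i$ with~\eqref{stabilityeqrelax} at the
pair $(i,j)$. The idea is to add the two semidefinite inequalities so that the quadratic part
$\Mat(P^i,2q^i,\cdot)$ telescopes and the surviving term on the right is ${F^i}^\intercal
\Mat(P^j,2q^j,0)F^i$; using~\eqref{boundedeqrelax} at index $j$ to dominate this term by
${F^i}^\intercal\Mat(\Id,0,\cdot)F^i$ produces the claimed bound, and the block structure of the
$E^{ij}$-term arises because $E^{ij}$ contains the rows of $E^i$ stacked above the rows coming from
$T^jA^i$, so ${E^{ij}}^\intercal(\cdot)E^{ij}$ absorbs both the $\pos{W^j}+\psd{W^j}$ slack (embedded
in the lower block) and the $\pos{U^{ij}}+\psd{U^{ij}}$ slack. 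The careful bookkeeping of which rows
of $E^{ij}$ correspond to $E^i$ versus to the pushed-forward polyhedron is where I expect the main
obstacle to lie: one must verify that the $F^i$-conjugation in~\eqref{lemmasimple} interacts
correctly with $E^{ij}$ so that the lower-right block $\pos{W^j}+\psd{W^j}$ of the embedded matrix
lands on the correct quadratic coordinates.

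Finally, items~(4) and~(5) are statements about the initial polytope $X^0$. For item~(4) I would take
any $x\in X^0$, locate the unique $i\in\In$ with $x\in X^{i0}$ (using that $\state$
satisfies~\eqref{partition}), and apply Lemma~\ref{polyhedralcopo} together with
Lemma~\ref{overpol} to~\eqref{initialeq} to get $L^i(x)\leq\alpha$; then~\eqref{boundedeq} gives
$\alpha-L^i(x)\leq\beta-\norm{x}_2^2$, exactly as in the proof of Theorem~\ref{boundedth}, whence
$\norm{x}_2^2\leq\beta$, and taking the supremum over $x\in X^0$ yields the claim. For item~(5), the
inequality $\sup_{x\in X^0}L(x)\leq\alpha$ follows from the same application
of~\eqref{initialeq}; the reverse inequality is where optimality enters: if one had the strict
inequality $\sup_{x\in X^0}L(x)<\alpha$ with $\alpha>0$, then one could decrease $\alpha$ slightly
while keeping~\eqref{initialeqrelax} feasible (the other constraints are monotone or unaffected in
the relevant direction), strictly lowering the objective $\alpha+\beta$ and contradicting
optimality. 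I would therefore argue by contradiction, and the delicate point is to check that a
small perturbation of $\alpha$ can be accommodated by the slack families $\mathcal{W},\mathcal{Z}$
without violating the semidefinite constraints, which follows from a standard continuity/feasible
direction argument.
```
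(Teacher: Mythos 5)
Your items (1), (3) and (4) follow essentially the paper's own route: (1) is the inclusion direction of Corollary~\ref{copositiverestriction} applied constraint by constraint; (3) is the paper's computation --- conjugate~\eqref{boundedeqrelax} at index $j$ (not $i$, as your first sentence says, though you correct this later) by $F^i$, add~\eqref{stabilityeqrelax} at $(i,j)$, use ${F^i}^\intercal\Mat(0,0,-\beta)F^i=\Mat(0,0,-\beta)$, and identify $E^jF^i$ with the lower rows of $E^{ij}$, which is exactly the bookkeeping you flag; for (4) you unroll the $k=0$ step of Theorem~\ref{boundedth} where the paper simply cites that theorem together with $X^0\subseteq\rea$ --- equivalent. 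The first genuine gap is item (2), which you dismiss as ``tautological'' by invoking the remark after~\eqref{SDPlyap} that the sign constraints do not affect feasibility: that remark \emph{is} item (2), so your argument is circular. The nontrivial direction is when a tuple satisfies the three matrix inequalities with $\alpha<0$. The paper first obtains $\beta\geq 0$ from Theorem~\ref{boundedth} (item (1) makes $\Pq$ a PQL, so $\beta$ bounds $\norm{x}_2^2$ over $\rea\supseteq X^0$), and then shifts $(\alpha,\beta)$ to $(0,\beta-\alpha)$: \eqref{boundedeqrelax} is literally unchanged since $\Mat(P^i,2q^i,0)-\Mat(\Id,0,-(\beta-\alpha))=\Mat(P^i,2q^i,-\alpha)-\Mat(\Id,0,-\beta)$, \eqref{stabilityeqrelax} involves neither $\alpha$ nor $\beta$, and \eqref{initialeqrelax} survives because $-\Mat(P^i,2q^i,0)=\Mat(0,0,-\alpha)-\Mat(P^i,2q^i,-\alpha)$ with $\Mat(0,0,-\alpha)\succeq 0$ when $\alpha<0$. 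None of this appears in your proposal.

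The second gap is item (5). You correctly isolate the delicate step --- preserving feasibility after decreasing $\alpha$ --- but there is no ``standard continuity/feasible direction argument'' available. Lowering $\alpha$ to $\gamma=\alpha-\epsilon$ \emph{tightens}~\eqref{initialeqrelax}: writing $\mathbf{N}$ for the matrix with a one in the top-left entry and zeros elsewhere, the left-hand side loses $\epsilon\mathbf{N}\succeq 0$, and feasible sets of semidefinite constraints are not open in a tightening direction. Nor can the pointwise bound $\sup_{x\in X^0}L(x)\leq\gamma$ be converted back into certificates $(\pos{Z^{i0}},\psd{Z^{i0}})$: Lemma~\ref{polyhedralcopo} and Corollary~\ref{copositiverestriction} give only one-sided implications (equality requires full row rank and $d\leq 4$), so pointwise validity does not imply relaxed feasibility. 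Hence your contradiction does not close without an explicit certificate. The paper supplies one algebraically: since the first row of $E^{i0}$ is $(1,0_{1\times d})$, one has ${E^{i0}}^\intercal\mathbf{N}E^{i0}=\mathbf{N}$, which is used to trade the $\epsilon\mathbf{N}$ term against the entrywise-nonnegative slack (replacing $\pos{Z^{i0}}$ by $\pos{Z^{i0}}+\epsilon\mathbf{N}$, still entrywise nonnegative), while~\eqref{boundedeqrelax} only gains $+\epsilon\mathbf{N}\succeq 0$ and~\eqref{stabilityeqrelax} is untouched; the perturbed tuple then has objective $\alpha+\beta-\epsilon$, contradicting optimality. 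One may debate whether the paper's own sign bookkeeping in that trade is fully watertight, but the essential point stands: the proof requires an explicit transformation of the slack matrices exploiting the structure of $E^{i0}$, not a generic perturbation argument, and this idea is absent from your proposal.
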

\begin{proof}
In appendix.
\end{proof}

\section{Sublevel Modelisation}
\label{sublevel}
In Def.~\ref{computablepiecewise}, $\beta$ is an upper bound on the Euclidian norm
of the state variable. We do not have a precise upper bound on each coordinate considered separetely neither a precise upper bound 
on the state variable considering a specific cell.   
To obtain tigher bounds on the state variables, we intersect $S_{\alpha}$ with other sublevel sets. 
In~\cite{DBLP:conf/hybrid/RouxJGF12}, the authors propose to combine classical quadratic Lyapunov function sublevels and the square of variables. 
In this paper, we apply this technique replacing classical Lyapunov functions by PQL functions.
Thus we are interested in a set $V$ of the form $V=S_{\alpha}\cap \cup_{i\in \ind} \{y\in X^i \mid y_l^2\leq \beta_l^i, l=1,\ldots,d\}$.
The computation of $V$ is thus reduced to compute $\beta_l^i$. In verification of programs, the method is called a \emph{templates domain abstraction}
(for more background~\cite{dblp1968095}).

From Eq.~\eqref{reach}, $\rea=\map{}(\rea)\cup X^0$. We introduce the map $F:\wp(\rd)\mapsto \wp(\rd)$ defined by:
 \[
 C\mapsto F(C):=\map{}(C)\cup X^0\enspace .
 \]
Hence $\rea$ is the smallest fixed point of $F$ in the sense of if $C=F(C)$ then $\rea\subseteq C$. 
 From Tarski's theorem~\cite{tarski1955}, since $F$ is monotone on $\wp(\rd)$, then:
 \begin{equation}
 \label{reatarski}
 \rea=\inf\{C\in\wp(\rd)\mid F(C)\subseteq C\};
 \end{equation}
 Consequently, if we take any subset $C$ such that $F(C)\subseteq C$ then $\rea\subseteq C$.
 We propose to consider a restricted family of subsets $C$ parameterized by
 $\betab\in\rr^{d+1}$:
 \[
 C(\betab):=\{x\in \rd\mid\forall\, k\in [d],\ x_k^2\leq \betab_k, L(x)\leq \betab_{d+1}\}  
 \]
 where $L$ is a PQL function of $\pws$. We define:
\[
\forall\, k\in [d],\ X_k^0=\sup_{y\in X^0} y_k^2\text{ and }X_{d+1}^0=\sup_{y\in X^0} L(y)
\]
We also define for all $(i,j)\in\overline{\Sw}$ and for all $\betab\in\rr^{d+1}$:
\[
\begin{array}{lc}
\displaystyle{\forall\, k\in[d],}&\ \displaystyle{\sha{F_{ij,k}}(\betab)=\sup_{\substack{\forall\, k\in [d],\ x_k^2\leq \betab_k,\\ L^i(x)\leq \betab_{d+1},\ x\in X^{ij}}} (A_{k \cdot}^i x+b_k^i)^2}\\
\text{ and }&\\
& \displaystyle{\sha{F_{ij,d+1}}(\betab)=\sup_{\substack{\forall\, k\in [d],\ x_k^2\leq \betab_k,\\ L^i(x)\leq \betab_{d+1},\ x\in X^{ij}}} L^j(A^i x+b^i)}
\end{array}
\]
and finally, we define for all $\betab\in\rr^{d+1}$:
\[
\forall\, l\in[d+1],\ \displaystyle{\sha{F_{l}}(\betab)=\sup\{\sup_{(i,j)\in \overline{\Sw}}\sha{F_{ij,l}}(\betab),X_l^0\}} 
\]
and $\sha{F}(\betab)=(\sha{F_1}(\betab),\ldots,\sha{F_{d+1}}(\betab))$.
\begin{proposition}
\label{modelprop}
The following statements hold: 
\begin{enumerate}
\item $F(C(\betab))\subseteq C(\betab)\iff\sha{F}(\betab)\leq \betab$;
\item $\rea\subseteq\inf\{C(\betab)\mid \betab\in\rr^{d+1} \text{ s.t. }\sha{F}(\betab)\leq \betab\}$;
\item For all $l\in [d+1]$, $\sha{F_{ij,l}}(\betab)$ is the optimal value of quadratic program;
\item For all $k\in[d]$, $X_k^0=\displaystyle{\max\{(\inf_{x\in X^0} x_k)^2,(\sup_{x\in X^0} x_k)^2\}}$ and if $L$ is constructed from an optimal solution $(\Pq,\mathcal{W},\mathcal{U},\mathcal{Z},\alpha,\beta)$ of~\eqref{SDPlyap} such that $\alpha>0$, then $X_{d+1}^0=\alpha$.
\end{enumerate}
\end{proposition}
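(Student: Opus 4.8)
The plan is to prove the four items in turn, with item~1 carrying essentially all the content and the remaining items following from it or from results already established. For item~1 I would unfold both sides and match them. Since $F(C)=\map{}(C)\cup X^0$, the inclusion $F(C(\betab))\subseteq C(\betab)$ is equivalent to the conjunction of $X^0\subseteq C(\betab)$ and $\map{}(C(\betab))\subseteq C(\betab)$. The first conjunct says every $y\in X^0$ obeys $y_k^2\leq\betab_k$ for $k\in[d]$ and $L(y)\leq\betab_{d+1}$; taking suprema over $X^0$ this is exactly $X_l^0\leq\betab_l$ for all $l\in[d+1]$. For the second conjunct I would fix $x\in C(\betab)$ and use the partition $\state$ to locate the unique cell $X^i\ni x$ and the unique cell $X^j$ with $\map{}(x)=f^i(x)\in X^j$; then $x\in X^{ij}$, so in particular $X^{ij}\neq\emptyset$ and $(i,j)\in\overline{\Sw}$.

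On $X^i$ one has $L=L^i$, so saying $x\in C(\betab)\cap X^i$ is precisely saying that $x$ lies in the feasibility domain $\{x_k^2\leq\betab_k,\ L^i(x)\leq\betab_{d+1},\ x\in X^{ij}\}$ of the programs defining $\sha{F_{ij,l}}(\betab)$; and since $\map{}(x)\in X^j$, the requirement $\map{}(x)\in C(\betab)$ reads $(A_{k\cdot}^ix+b_k^i)^2\leq\betab_k$ and $L^j(A^ix+b^i)\leq\betab_{d+1}$. Demanding this for all admissible $x$ is, by definition of the suprema, exactly $\sha{F_{ij,l}}(\betab)\leq\betab_l$ for every $(i,j)\in\overline{\Sw}$ and every $l$ (with the convention that a sup over an empty feasible set is $-\infty$, which is harmless). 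Combining the two conjuncts with $\sha{F_l}(\betab)=\sup\{\sup_{(i,j)\in\overline{\Sw}}\sha{F_{ij,l}}(\betab),X_l^0\}$ gives $\sha{F}(\betab)\leq\betab$, and every step is an equivalence, so item~1 follows.

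Items~2 and~3 are then short. For item~2, item~1 shows that any $\betab$ with $\sha{F}(\betab)\leq\betab$ yields $F(C(\betab))\subseteq C(\betab)$, so the Tarski characterisation~\eqref{reatarski} forces $\rea\subseteq C(\betab)$; intersecting over all such $\betab$ gives $\rea\subseteq\inf\{C(\betab)\mid\sha{F}(\betab)\leq\betab\}$. For item~3 I would just exhibit the programs: the objectives $(A_{k\cdot}^ix+b_k^i)^2$ and $L^j(A^ix+b^i)=(A^ix+b^i)^\intercal P^j(A^ix+b^i)+2(A^ix+b^i)^\intercal q^j$ are quadratic in $x$, while the feasible set is cut out by the quadratic sublevel constraints $x_k^2\leq\betab_k$ and $L^i(x)\leq\betab_{d+1}$ together with the linear inequalities describing $X^{ij}=X^i\cap{f^i}^{-1}(X^j)$ (a polyhedron, since $f^i$ is affine); hence each $\sha{F_{ij,l}}(\betab)$ is the value of a quadratically constrained quadratic program.

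Item~4 splits in two. For $k\in[d]$, the image of the polytope $X^0$ under the linear form $y\mapsto y_k$ is the interval $[\inf_{x\in X^0}x_k,\sup_{x\in X^0}x_k]$, and since $t\mapsto t^2$ is convex its supremum over an interval is attained at an endpoint, giving $X_k^0=\max\{(\inf_{x\in X^0}x_k)^2,(\sup_{x\in X^0}x_k)^2\}$. For $l=d+1$, the claim $X_{d+1}^0=\sup_{y\in X^0}L(y)=\alpha$ under optimality and $\alpha>0$ is precisely item~5 of Proposition~\ref{PQLprop}. The main obstacle is the bookkeeping in item~1: one must keep straight that the sublevel constraint inside the program uses $L^i$ whereas the target constraint uses $L^j$, that $L$ agrees with $L^i$ on $X^i$ but the image $\map{}(x)$ lives in $X^j$, and that the correct index set is $\overline{\Sw}$ rather than $\Sw$ because $x$ ranges over all of $C(\betab)$ and not merely over $\rea$. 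Once the partition structure makes these identifications unambiguous, the equivalence reduces to taking suprema.
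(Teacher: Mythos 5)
Your proposal is correct and follows essentially the same route as the paper's proof: item~1 by unfolding $F(C(\betab))\subseteq C(\betab)$ into supremum conditions over $X^0$ and over $\map{}(C(\betab))$ decomposed along $(i,j)\in\overline{\Sw}$, item~2 by Tarski's characterisation~\eqref{reatarski} combined with item~1, item~3 by inspection of the quadratic programs, and item~4 by compactness of $X^0$ (your endpoint/convexity phrasing is equivalent to the paper's attainment argument) together with the fifth assertion of Prop.~\ref{PQLprop}. Your added care about the empty-feasible-set convention and about why $\overline{\Sw}$ rather than $\Sw$ is the right index set is sound and only makes explicit what the paper leaves implicit.
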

\begin{proof}
In appendix.
\end{proof}

\section{Policy Iteration Algorithm}
\label{policy}
Now, we assume that Problem~\eqref{SDPlyap} has an optimal 
solution $(\Pq,\mathcal{W},\mathcal{U},\mathcal{Z},\alpha,\beta)$ with $\alpha>0$ and let $L$ be the associated PQL function. 

From Prop.~\ref{modelprop}, to evaluate $\sha{F_{ij,l}}(\betab)$ is equivalent to solve a quadratic maximisation problem which is known to be NP-Hard~\cite{Vavasis199073}. So we propose to compute instead a safe overapproximation using Lagrange duality and semi-definite programming. 
\subsection{Relaxed functional} 
In this subsection, we define the function on which we compute fixed point.
Let $(i,j)\in\overline{\Sw}$, $\betab\in\rr^{d+1}$.

For all $k\in[d]$, we write $\Mat_k$ for $\Mat(x\mapsto x_k^2)$ and for all $i\in\ind$, $\Mat_L^i$ for $\Mat(L^i)$. The matrix $\mathbf{N}\in\mt{(d+1)}{(d+1)}$ is defined as follows: $\mathbf{N}_{1,1}=1$ and $\mathbf{N}_{l,m}=0$ for all $(l,m)\in [d+1]^2\backslash\{(1,1)\}$.  

Let, $\lambda\in\rr_+^{d+1}$, $Y\in\Sp{n_{ij}}$ and $Z\in \Sdp{n_{ij}}$. Then:
\begin{equation}
\begin{array}{l}
\Phi_{ij,k}(\lambda,Y,Z)=\\
\displaystyle{{F^i}^\intercal \Mat_k F^i-\sum_{l=1}^d \lambda_l \Mat_l -\lambda_{l+1}\Mat_L^i+{E^{ij}}^\intercal(Y+Z)E^{ij}}\\
\Phi_{ij,d+1}(\lambda,Y,Z)=\\
\displaystyle{{F^i}^\intercal \Mat_L^j F^i-\sum_{l=1}^d \lambda_l \Mat_l -\lambda_{l+1}\Mat_L^i+{E^{ij}}^\intercal(Y+Z)E^{ij}}
\end{array}
\end{equation}
For all $l\in[d+1]$, for all $\betab\in\rr_+^{d+1}$:
 \begin{equation}
 \label{relaxedSDP}
\begin{array}{ll}
\rel{F}_{ij,l}(\betab)=&\\
\displaystyle{\inf_{\lambda,\eta,Y,Z}} & \eta\\
\st & \left\{
\begin{array}{l}
(\eta-\sum_{k=1}^{d+1} \lambda_k \betab_k)\mathbf{N}-\Phi_{ij,l}(\lambda,Y,Z)\succeq 0,\\
\lambda\in\rr_+^{d+1},\ \eta\in\rr,\ Y\geq 0,\ Z\succeq 0
\end{array}
\right.
\end{array}
\end{equation}
\[
\rel{F}_{l}(\betab)=\sup\{ \sup_{(i,j)\in\overline{\Sw}} \rel{F_{ij,l}}(\betab),X_l^0\}
\]
and $\rel{F}(\betab)=(\rel{F}_1(\betab),\ldots,\rel{F}_{d+1}(\betab))$.

\begin{proposition}[Safe overapproximation]
\label{abstractprop}
The following assertions are true:
\begin{enumerate}
\item For all $l\in[d+1]$, $\rel{F_l}$ is the optimal value of a SDP program;
\item $\sha{F}\leq \rel{F}$\enspace . 
 \end{enumerate}
\end{proposition}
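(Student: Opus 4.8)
The plan is to dispatch the first assertion by inspection of the definition~\eqref{relaxedSDP}, and to prove the second by weak Lagrangian duality, that is, by an S-procedure / copositive relaxation of the quadratic maximisation programs that Proposition~\ref{modelprop} attaches to $\sha{F}$.

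For assertion 1, I would simply read off the structure of~\eqref{relaxedSDP}: for fixed $(i,j)$, $l$ and $\betab$, the objective $\eta$ is linear, and the constraint $(\eta-\sum_k\lambda_k\betab_k)\mathbf N-\Phi_{ij,l}(\lambda,Y,Z)\succeq 0$ is affine in the decision variables $(\lambda,\eta,Y,Z)$, because $\Phi_{ij,l}$ is affine in $(\lambda,Y,Z)$. Combined with the constraints $\lambda\in\rr_+^{d+1}$, $Y\in\Sp{n_{ij}}$ (componentwise nonnegative) and $Z\in\Sdp{n_{ij}}$, all of which are SDP-representable, $\rel{F_{ij,l}}(\betab)$ is exactly the optimal value of a semidefinite program. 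Since $\overline{\Sw}$ is finite, $\rel{F_l}(\betab)=\sup\{\sup_{(i,j)}\rel{F_{ij,l}}(\betab),X_l^0\}$ is then obtained as the maximum of finitely many such SDP values together with the constant $X_l^0$.

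Assertion 2 is the substance. As both $\sha{F_l}$ and $\rel{F_l}$ are the supremum over the \emph{same} finite index set $\overline{\Sw}$ and the same constant $X_l^0$, it suffices to prove $\sha{F_{ij,l}}(\betab)\le\rel{F_{ij,l}}(\betab)$ for each $(i,j)$ and each $l$. First I would homogenise with $\ux$: by~\eqref{lemmasimple} the objective $g$ of $\sha{F_{ij,k}}$ equals $\ux^\intercal {F^i}^\intercal\Mat_k F^i\ux$ and that of $\sha{F_{ij,d+1}}$ equals $\ux^\intercal {F^i}^\intercal\Mat_L^j F^i\ux$, which match the first term of $\Phi_{ij,l}$; the constraints rewrite as $\ux^\intercal(\betab_k\mathbf N-\Mat_k)\ux\ge 0$ and $\ux^\intercal(\betab_{d+1}\mathbf N-\Mat_L^i)\ux\ge 0$, and by Lemma~\ref{overpol} the membership $x\in X^{ij}$ can be relaxed to $E^{ij}\ux\ge 0$, which only enlarges the feasible set and hence can only increase the supremum. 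Then, for any $x$ feasible for this relaxed program and any $(\lambda,\eta,Y,Z)$ feasible for~\eqref{relaxedSDP}, I would left/right multiply the defining LMI by $\ux$. Using $\ux^\intercal\mathbf N\ux=1$, the identities $\ux^\intercal\Mat_k\ux=x_k^2$ and $\ux^\intercal\Mat_L^i\ux=L^i(x)$, and the (still to be justified) nonnegativity $\ux^\intercal {E^{ij}}^\intercal(Y+Z)E^{ij}\ux\ge 0$, one obtains $g(x)\le\eta-\sum_{k=1}^{d+1}\lambda_k\betab_k+\sum_{k=1}^d\lambda_k x_k^2+\lambda_{d+1}L^i(x)$; invoking $\lambda\ge 0$ with $x_k^2\le\betab_k$ and $L^i(x)\le\betab_{d+1}$ bounds the last two sums by $\sum_{k=1}^{d+1}\lambda_k\betab_k$, which cancels the subtracted term and leaves $g(x)\le\eta$. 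Taking the supremum over $x$ and the infimum over $(\lambda,\eta,Y,Z)$ yields $\sha{F_{ij,l}}(\betab)\le\rel{F_{ij,l}}(\betab)$, and passing to the supremum over $\overline{\Sw}$ and over $X_l^0$ gives $\sha{F}\le\rel{F}$.

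The one delicate point, and the crux of the whole argument, is the inequality $\ux^\intercal {E^{ij}}^\intercal(Y+Z)E^{ij}\ux\ge 0$ for feasible $x$: this is exactly the copositive relaxation underlying Corollary~\ref{copositiverestriction}. Writing $v=E^{ij}\ux$, feasibility gives $v\ge 0$ componentwise; then $v^\intercal Z v\ge 0$ because $Z\succeq 0$, and $v^\intercal Y v=\sum_{p,q}Y_{pq}v_p v_q\ge 0$ because $Y\in\Sp{n_{ij}}$ has nonnegative entries and $v$ is nonnegative, so that ${E^{ij}}^\intercal(Y+Z)E^{ij}$ is $E^{ij}$-copositive. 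It is precisely this one-sidedness that makes the duality a genuine overapproximation rather than an exact value. I expect the only real care to be in the sign bookkeeping of the LMI, namely checking that the signs of $\lambda_{d+1}\Mat_L^i$ and of the copositive term ${E^{ij}}^\intercal(Y+Z)E^{ij}$ are oriented so that the unwanted quadratic terms cancel against $\sum_k\lambda_k\betab_k$; everything else is routine.
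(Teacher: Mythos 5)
Your proposal is correct and takes essentially the same approach as the paper: weak Lagrange duality in the multipliers $\lambda\in\rr_+^{d+1}$ combined with the copositive/SDP relaxation of the membership $x\in X^{ij}$ (Lemma~\ref{overpol} together with the easy inclusion direction of Corollary~\ref{copositiverestriction}). The only difference is presentational --- where the paper chains successively relaxed optimization problems and invokes the weak duality theorem as a black box, you verify the same inequality directly by sandwiching the LMI of~\eqref{relaxedSDP} with $\ux$ --- and your sign bookkeeping and your inline justification of $\ux^\intercal{E^{ij}}^\intercal(Y+Z)E^{ij}\ux\geq 0$ are exactly right.
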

\begin{proof}
In appendix.
\end{proof}
\begin{lemma}
\label{relaxedviainfsup}
Let $(i,j)\in\overline{\Sw}$, $l\in[d+1]$ and $\betab\in\rr^{d+1}$. Then:
\[
\rel{F_{ij,l}}(\betab)=\inf_{\lambda\in\rr_+^{d+1}} F_{ij,l}^{\lambda}(\betab)
\]
where 
\begin{equation}
\label{affine}
F_{ij,l}^{\lambda}(\betab)=\sum_{m=1}^{d+1} \lambda_m \betab_m+\inf_{\substack{Y\geq 0\\ Z\succeq 0}} \sup_{x\in\rd} \ux^\intercal \Phi_{ij,l}(\lambda,Y,Z)\ux 
\end{equation}
\end{lemma}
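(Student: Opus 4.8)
The plan is to rewrite the semidefinite program defining $\rel{F_{ij,l}}(\betab)$ by eliminating the scalar variable $\eta$ and then separating the matrix variables $Y,Z$ from the multiplier $\lambda$, thereby turning the linear matrix inequality of~\eqref{relaxedSDP} into a scalar supremum over $x\in\rd$. The whole argument rests on a single homogenization identity for the matrix $\mathbf{N}$.

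The pivotal observation is that, since $\mathbf{N}$ has the single nonzero entry $\mathbf{N}_{1,1}=1$, one has $\ux^\intercal\mathbf{N}\ux=1$ for every $x\in\rd$. I claim that for any symmetric $\Phi\in\Sy{d+1}$ and any scalar $c\in\rr$,
\[
c\,\mathbf{N}-\Phi\succeq 0\iff \sup_{x\in\rd}\ux^\intercal\Phi\ux\leq c\enspace .
\]
First I would prove the forward direction, which is immediate: testing the positive semidefinite matrix $c\mathbf{N}-\Phi$ against the vector $\ux$ gives $c-\ux^\intercal\Phi\ux\geq 0$ for every $x$, hence $\sup_x\ux^\intercal\Phi\ux\leq c$. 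For the converse I would write $\Phi$ in block form with top-left entry $\phi_{00}$, off-diagonal block $\phi_0$ and lower-right block $\Phi_{11}$, and expand $u^\intercal(c\mathbf{N}-\Phi)u$ for $u=(u_0,v)$. For $u_0\neq 0$ I would divide by $u_0^2$ and set $y=v/u_0$, so the inequality becomes $c-\begin{pmatrix}1\\y\end{pmatrix}^\intercal\Phi\begin{pmatrix}1\\y\end{pmatrix}\geq 0$, which holds by hypothesis; for $u_0=0$ the expression reduces to $-v^\intercal\Phi_{11}v$, and finiteness of the supremum forces $\Phi_{11}\preceq 0$, making this nonnegative. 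Combining the two cases yields $u^\intercal(c\mathbf{N}-\Phi)u\geq 0$ for all $u$, i.e. positive semidefiniteness.

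With the claim in hand, I would apply it with $\Phi=\Phi_{ij,l}(\lambda,Y,Z)$ and $c=\eta-\sum_{k=1}^{d+1}\lambda_k\betab_k$. For fixed feasible $\lambda\geq 0$, $Y\geq 0$, $Z\succeq 0$, the constraint of~\eqref{relaxedSDP} is then equivalent to $\eta\geq\sum_{k=1}^{d+1}\lambda_k\betab_k+\sup_{x\in\rd}\ux^\intercal\Phi_{ij,l}(\lambda,Y,Z)\ux$, so the least admissible $\eta$ equals that right-hand side; this stays correct in the extended reals, since the supremum is $+\infty$ precisely when no finite $\eta$ is admissible. Taking the infimum over all variables gives
\[
\rel{F_{ij,l}}(\betab)=\inf_{\lambda\geq 0,\,Y\geq 0,\,Z\succeq 0}\Big(\sum_{m=1}^{d+1}\lambda_m\betab_m+\sup_{x\in\rd}\ux^\intercal\Phi_{ij,l}(\lambda,Y,Z)\ux\Big)\enspace .
\]
Since $\sum_{m}\lambda_m\betab_m$ does not depend on $Y,Z$, I would pull the infimum over $Y,Z$ inside, recognize the inner quantity as $F_{ij,l}^{\lambda}(\betab)$ from~\eqref{affine}, and conclude $\rel{F_{ij,l}}(\betab)=\inf_{\lambda\in\rr_+^{d+1}}F_{ij,l}^{\lambda}(\betab)$.

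I expect the main obstacle to be the converse direction of the homogenization claim, specifically the degenerate case where $\Phi_{11}$ is negative semidefinite but singular: there one cannot invoke a plain Schur complement and must instead verify that finiteness of the supremum forces $\phi_0$ to lie in the range of $\Phi_{11}$ and treat the boundary $u_0=0$ by a limiting argument. Everything after the claim is routine bookkeeping of the infimum interchange, which is legitimate because the variables $\lambda$ and $(Y,Z)$ enter the objective separately.
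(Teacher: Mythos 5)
Your proof is correct and is precisely the argument the paper leaves implicit (Lemma~\ref{relaxedviainfsup} is stated there without proof): your pivotal homogenization claim $c\,\mathbf{N}-\Phi\succeq 0\iff\sup_{x\in\rd}\ux^\intercal\Phi\ux\leq c$ is exactly the paper's Lemma~\ref{quadextension} applied to the quadratic function $x\mapsto c-\ux^\intercal\Phi\ux$, whose lift matrix is $c\,\mathbf{N}-\Phi$ (the same rewriting appears in the appendix proof of Prop.~\ref{abstractprop}), and the elimination of $\eta$ and the splitting of the infimum over $\lambda$ and $(Y,Z)$ are handled correctly, including the extended-real cases. One small correction: the \emph{main obstacle} you anticipate at the end is vacuous, since your own two-case analysis is already complete --- for $u_0=0$ positive semidefiniteness requires only $\Phi_{11}\preceq 0$, which finiteness of the supremum already forces, so no range condition on $\phi_0$ and no limiting argument are needed (that range condition governs whether the supremum is finite, not the implication you actually prove).
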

\begin{proposition}
\label{relaxedprop}
Let $(i,j)\in\overline{\Sw}$, $l\in [d+1]$, $\lambda\in\rr_+^{d+1}$.
The following statements are true:
\begin{enumerate}
\item $F_{ij,l}^{\lambda}$ is affine;
\item $F_{ij,l}^{\lambda}$, $\rel{F_{ij,l}}$ and $\rel{F_l}$ are monotone;
\item $\rel{F_{ij,l}}$ and $\rel{F_l}$ are upper semi-continuous. 
\end{enumerate}
\end{proposition}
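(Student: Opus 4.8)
The plan is to read off all three properties directly from the explicit representation furnished by Lemma~\ref{relaxedviainfsup}, exploiting that the dependence on $\betab$ is confined to a single linear term. First I would establish affinity. In the expression
\[
F_{ij,l}^{\lambda}(\betab)=\sum_{m=1}^{d+1} \lambda_m \betab_m+\inf_{\substack{Y\geq 0\\ Z\succeq 0}} \sup_{x\in\rd} \ux^\intercal \Phi_{ij,l}(\lambda,Y,Z)\ux,
\]
the matrix $\Phi_{ij,l}(\lambda,Y,Z)$ does not involve $\betab$, so the second summand is a constant $g(\lambda)$ depending only on $\lambda$. Hence $F_{ij,l}^{\lambda}(\betab)=\langle\lambda,\betab\rangle+g(\lambda)$ is the sum of a linear form in $\betab$ and a constant, which is exactly an affine function of $\betab$. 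This settles the first item.

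For monotonicity I would proceed from the bottom up. Since $\lambda\in\rr_+^{d+1}$ has nonnegative entries, $\betab\mapsto\sum_{m}\lambda_m\betab_m$ is nondecreasing in each coordinate, so $F_{ij,l}^{\lambda}$ is monotone. Next, $\rel{F_{ij,l}}=\inf_{\lambda}F_{ij,l}^{\lambda}$ is a pointwise infimum of monotone functions: if $\betab\leq\betab'$ then for each fixed $\mu$ one has $\inf_{\lambda}F_{ij,l}^{\lambda}(\betab)\leq F_{ij,l}^{\mu}(\betab)\leq F_{ij,l}^{\mu}(\betab')$, and taking the infimum over $\mu$ yields $\rel{F_{ij,l}}(\betab)\leq\rel{F_{ij,l}}(\betab')$, so $\rel{F_{ij,l}}$ is monotone. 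Finally, $\rel{F_l}$ is the pointwise supremum of the monotone functions $\rel{F_{ij,l}}$, $(i,j)\in\overline{\Sw}$, together with the constant $X_l^0$; a supremum of monotone functions is monotone, which closes the second item.

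For upper semi-continuity I would use two complementary closure properties. Each $F_{ij,l}^{\lambda}$ is affine, hence continuous, so $\rel{F_{ij,l}}=\inf_{\lambda}F_{ij,l}^{\lambda}$ is an infimum of continuous functions and is therefore upper semi-continuous: for every $c$ the sublevel set $\{\betab\mid \rel{F_{ij,l}}(\betab)<c\}=\bigcup_{\lambda}\{\betab\mid F_{ij,l}^{\lambda}(\betab)<c\}$ is open. The delicate point, and the one I expect to be the main obstacle, is the passage to $\rel{F_l}$: a supremum of upper semi-continuous functions is in general only lower semi-continuous, so I must rely on $\overline{\Sw}\subseteq\ind^2$ being \emph{finite}. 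Then $\rel{F_l}$ is a finite maximum of the upper semi-continuous functions $\rel{F_{ij,l}}$ and the constant $X_l^0$, and a finite maximum preserves upper semi-continuity. This finiteness is exactly what rescues the argument and yields the third item.
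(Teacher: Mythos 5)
Your proof is correct and follows essentially the same route as the paper's: affinity read off from Eq.~\eqref{affine} since $\Phi_{ij,l}$ does not involve $\betab$, monotonicity from $\lambda\geq 0$ and preservation under pointwise infima/suprema, and upper semi-continuity of $\rel{F_{ij,l}}$ as an infimum of continuous (affine) functions, then of $\rel{F_l}$ as a \emph{finite} supremum of upper semi-continuous functions. If anything, you are slightly more careful than the paper, which loosely attributes the monotonicity of $\rel{F_{ij,l}}$ to a supremum where an infimum argument (the one you give) is what is needed, and you rightly flag the finiteness of $\overline{\Sw}$ as the crucial point in the last item.
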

\begin{proof}
In appendix.
\end{proof}
To be able to perform a new step in policy iteration, we need a selection property. In our case, the selection property relies on the existence of an optimal dual solution.
\begin{definition}[Selection property]
Let $(i,j)\in\overline{\Sw}$ and $l\in [d+1]$. We say that $\betab\in\rr^{d+1}$ satisfies the selection property if 
there exists $\lambda\in\rr_+^{d+1}$ such that:
\begin{equation}
\label{selectth}
\rel{F_{ij,l}}(\betab)=F_{ij,l}^{\lambda}(\betab)
\end{equation}
We define:
\[
\Sol{(i,j),l,\betab}:=\{\lambda\in \rr_+^{d+1}\mid \rel{F_{ij,l}}(\betab)=F_{ij,l}^{\lambda}(\betab)\}
\]
and 
\[
\begin{array}{l}
\FS:=\\
\{\betab\in\rr^{d+1}\mid \forall\, (i,j)\in\o\overline{\Sw},\forall\, l\in [d+1],\Sol{(i,j),l,\betab}\neq \emptyset\}\enspace.
\end{array}
\]
\end{definition}
\begin{corollary}
\label{lemma2}
Let $(i,j)\in\overline{\Sw}$, $l\in [d+1]$ and $\betab\in\FS$. Now let $\overline{\lambda}\in \Sol{(i,j),\betab,p}$, then: 
\[
\displaystyle{\inf_{\substack{Y\geq 0\\ Z\succeq 0}} \sup_{x\in\rd} \ux^\intercal \Phi_{ij,l}(\lambda,Y,Z)\ux}=
\rel{F_{ij,l}}(\betab)-\sum_{m=1}^{d+1} \overline{\lambda}_m \betab_m\enspace .
\] 
\end{corollary}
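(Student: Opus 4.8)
The plan is to prove this by directly unfolding the definition of the solution set $\Sol{(i,j),l,\betab}$ together with the representation of $\rel{F_{ij,l}}$ established in Lemma~\ref{relaxedviainfsup}. First I would recall that the hypothesis $\betab\in\FS$ guarantees the existence of a multiplier at which the selection property holds, and that membership $\overline{\lambda}\in\Sol{(i,j),l,\betab}$ is, by definition of that set, nothing more than the equality
\[
\rel{F_{ij,l}}(\betab)=F_{ij,l}^{\overline{\lambda}}(\betab)\enspace .
\]
Thus the entire content of the corollary is already encoded in the selection property; it remains only to expose the inner saddle value.

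Next I would substitute the explicit form of $F_{ij,l}^{\overline{\lambda}}(\betab)$ given in Eq.~\eqref{affine}, namely
\[
F_{ij,l}^{\overline{\lambda}}(\betab)=\sum_{m=1}^{d+1} \overline{\lambda}_m \betab_m+\inf_{\substack{Y\geq 0\\ Z\succeq 0}} \sup_{x\in\rd} \ux^\intercal \Phi_{ij,l}(\overline{\lambda},Y,Z)\ux\enspace .
\]
Combining this with the equality from the selection property yields
\[
\rel{F_{ij,l}}(\betab)=\sum_{m=1}^{d+1} \overline{\lambda}_m \betab_m+\inf_{\substack{Y\geq 0\\ Z\succeq 0}} \sup_{x\in\rd} \ux^\intercal \Phi_{ij,l}(\overline{\lambda},Y,Z)\ux\enspace ,
\]
and isolating the inner infimum–supremum term, which only requires transposing the finite linear term $\sum_{m} \overline{\lambda}_m \betab_m$ to the other side, gives exactly the claimed identity.

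Since every step is a rewriting of definitions already in place, there is no genuine analytic difficulty; the corollary is essentially the selection property read in the form that separates the part linear in $\betab$ from the residual saddle value. The only point requiring attention is the correct bookkeeping of the multiplier: one must keep the specific optimizer $\overline{\lambda}$ fixed throughout, rather than reverting to the generic $\lambda$ over which $\rel{F_{ij,l}}$ is infimized in Lemma~\ref{relaxedviainfsup}, so that both the $\sum_{m} \overline{\lambda}_m \betab_m$ term and the inner $\inf$–$\sup$ in the displayed equation are evaluated at this same $\overline{\lambda}$.
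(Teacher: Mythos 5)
Your proof is correct and matches the argument the paper intends: the corollary is stated without proof precisely because it is the immediate combination of the definition of $\Sol{(i,j),l,\betab}$ with the representation of $F_{ij,l}^{\lambda}$ in Eq.~\eqref{affine} of Lemma~\ref{relaxedviainfsup}, which is exactly what you do. You also handle correctly the bookkeeping point (keeping the fixed optimizer $\overline{\lambda}$ throughout, where the paper's statement itself has a typo writing $\Phi_{ij,l}(\lambda,Y,Z)$ instead of $\Phi_{ij,l}(\overline{\lambda},Y,Z)$), so nothing is missing.
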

Let $(i,j)\in \overline{\Sw}$, $l\in [d+1]$ and $\betab\in \FS$. From Corollary~\ref{lemma2}, for all $\lambda\in\Sol{(i,j),l,\betab}$, we can rewrite for all $v\in\rr^{d+1}$ as follows:
\begin{equation}
\label{affine2}
F_{ij,l}^{\lambda}(v)=\sum_{m=1}^{d+1} \lambda_m v_m+\rel{F_{ij,l}}(\betab)-\sum_{m=1}^{d+1} \overline{\lambda}_m \betab_m
\end{equation}
We remark that $F_{ij,l}^{\lambda}(\betab)=\rel{F_{ij,l}}(\betab)$.

From the first statement of Prop.~\ref{modelprop} and the second assertion of Prop.~\ref{abstractprop}, the most precise 
overapproximation of $\rea$ (with these quadratic functions) is given by:
\[
\overline{\betab}=\inf\{\betab\in\rr^{d+1}\mid \rel{F}(\betab)\leq \betab\}
\]
From Tarski's theorem, $\overline{\betab}$ is the (finite) smallest fixed point of $\rel{F}$. So we are looking for 
the smallest fixed point of $\rel{F}$. The smallest seems difficult to obtain and since any vector $\betab$ such that $\rel{F}(\betab)\leq \betab$ furnishes 
a valid but less precise overapproximation of $\rea$, we perform a policy iteration until a fixed point is reached.       
\subsection{Policy definition}
A policy iteration algorithm can be used to solve a fixed point equation for a 
monotone function written as an infimum of a family of simpler 
monotone functions, obtained by selecting {\em policies},
see~\cite{costan2005policy,gaubert2007static} for more background. The idea 
is to solve a sequence of fixed point problems involving the simple functions.
In the present setting, we look for a representation of the relaxed
function:
\begin{align}
\forall\, (i,j)\in\overline{\Sw},\ \forall\, l\in [d+1],\ \rel{F_{ij,l}}= \inf_{\pi\in \Pi}F_{ij,l}^\pi \label{e-def-select}
\end{align}
where the infimum is taken over a set $\Pi$ whose elements $\pi$ are called {\em policies}, and
where each function $F^\pi$ is required to be monotone. The correctness
of the algorithm relies on a selection property, meaning in the present
setting that for each argument $((i,j),l,\betab)$ there must exist a
policy $\pi$ such that $\rel{F_{ij,l}}(\betab)= F^\pi_{ij,l}(\betab)$. The idea
of the algorithm is to start from a policy $\pi^0$, compute the smallest
fixed point $\betab$ of $F^{\pi^0}$, evaluate $\rel{F}$ at point $\betab$,
and, if $\betab\neq \rel{F}(\betab)$, determine the new policy using the selection property at point $\betab$.

Let us now identify the policies. 
Lemma~\ref{relaxedviainfsup} shows that for all $l\in [d+1]$,
$\rel{F_{ij}}$ can be written as the infimum of the family of affine functions $F_{ij}^{\lambda}$, 
the infimum being taken over the set of $\lambda\in\rr_+^{d+1}$.
When $\betab\in\FS$ is given, choosing a policy $\pi$ consists
in selecting, for each $(i,j)\in\overline{\Sw}$ and for all $l\in[d+1]$, a vector $\lambda\in \Sol{(i,j),l,\betab}$. 
We denote by $\pi_{ij,l}(\betab)$ the value of $\lambda$ chosen by the policy $\pi$. 
Then, the map $F_{ij,l}^{\pi_{ij,l}}$ in Equation~\eqref{e-def-select} is obtained by replacing $\rel{F_{ij,l}}$ by 
$F_{ij,l}^\lambda$ appearing in Eq.~\eqref{affine2}. 

Finally, we define, for all $l\in [d+1]$:
\[
F_l^\pi(\betab)=\sup\{\sup_{(i,j)\in \overline{\Sw}} F_{ij,l}^{\pi_{ij,l}}(\betab),X_l^0\}
\]
and $F^\pi=(F_1^{\pi},\ldots,F_{d+1}^{\pi})$.

Now, we can define concretely the policy iteration algorithm at Algorithm~\ref{PIQua}.
\begin{algorithm}
\caption{Policy Iteration with PQL functions}\label{PIQua}
\begin{itemize}
 \item[1] Choose $\pi^0\in\Pi$,  $k=0$.
 \item[2] Define $F^{\pi^k}$ by choosing $\lambda$ according to policy $\pi^k$ using Eq.~\eqref{affine2}.
 \item[3] Compute the smallest fixed point $\betab^k$ in $\rr^{d+1}$ of $F^{\pi^k}$.
 \item[4] If $\betab^k\in\FS$ continue otherwise return $\betab^k$.
 \item[5] Evaluate $\rel{F}(\betab^k)$, if $\rel{F}(\betab^k)=\betab^k$ 
return $\betab^k$ otherwise take $\pi^{k+1}$ s.t. $\rel{F}(\betab^k)= F^{\pi^{k+1}}(\betab^k)$.
Increment $k$ and go to 2.  
\end{itemize}
\end{algorithm}
\subsection{Some details about Policy Iteration algorithm}
\paragraph*{Initialization}
Policy iteration algorithm needs an initial policy. Recall that 
we have assumed that $L$ was computed from an optimal solution $(\Pq,\mathcal{W},\mathcal{U},\mathcal{Z},\alpha,\beta)$ of Problem~\eqref{SDPlyap} such that $\alpha>0$.
The first policy is given by a choice of an element in $\Sol{(i,j),l,w^0}$ where $w^0$ is defined by:
\begin{equation}
\label{initialpoint}
\forall\, k\in [d],\ \betab_k^0=\beta,\ w_{d+1}^0=\alpha
\end{equation}
with $\alpha$ and $\beta$ are extracted from $(\Pq,\mathcal{W},\mathcal{U},\mathcal{Z},\alpha,\beta)$.
\begin{proposition}
\label{initialprop}
The vector $\betab^0$ satisfies $\rel{F}(\betab^0)\leq \betab^0$.
\end{proposition}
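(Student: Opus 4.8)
The plan is to establish the inequality coordinatewise, i.e. to show $\rel{F_l}(\betab^0)\le\betab^0_l$ for every $l\in[d+1]$. Recalling that $\rel{F_l}(\betab^0)=\sup\{\sup_{(i,j)\in\overline{\Sw}}\rel{F_{ij,l}}(\betab^0),X_l^0\}$, this splits into checking $X_l^0\le\betab^0_l$ and, for each $(i,j)\in\overline{\Sw}$, exhibiting a single point $(\lambda,\eta,Y,Z)$ feasible for the semidefinite program~\eqref{relaxedSDP} defining $\rel{F_{ij,l}}(\betab^0)$ and having objective value $\eta=\betab^0_l$. Since $\rel{F_{ij,l}}(\betab^0)$ is the infimum of $\eta$ over feasible points, any such witness forces $\rel{F_{ij,l}}(\betab^0)\le\betab^0_l$. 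Here $\betab^0_k=\beta$ for $k\in[d]$ and $\betab^0_{d+1}=\alpha$.

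The bounds $X_l^0\le\betab^0_l$ come directly from Proposition~\ref{PQLprop}. For $k\in[d]$, $y_k^2\le\norm{y}_2^2$ gives $X_k^0=\sup_{y\in X^0}y_k^2\le\sup_{y\in X^0}\norm{y}_2^2\le\beta$ by its fourth assertion, and for $l=d+1$ its fifth assertion (optimality together with $\alpha>0$) yields $X_{d+1}^0=\sup_{y\in X^0}L(y)=\alpha$.

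For the semidefinite witnesses I would use in every case the multiplier $\lambda=(0,\dots,0,1)\in\rr_+^{d+1}$, so that $\sum_m\lambda_m\betab^0_m=\betab^0_{d+1}=\alpha$. When $l=d+1$ and $\eta=\alpha$, the constraint of~\eqref{relaxedSDP} collapses to $\Mat_L^i-{F^i}^\intercal\Mat_L^j F^i-{E^{ij}}^\intercal(Y+Z)E^{ij}\succeq0$, which is precisely the stability relaxation~\eqref{stabilityeqrelax} upon setting $Y=\pos{U^{ij}}\in\Sp{n_{ij}}$ and $Z=\psd{U^{ij}}\in\Sdp{n_{ij}}$; hence $\rel{F_{ij,d+1}}(\betab^0)\le\alpha$. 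When $l=k\in[d]$ and $\eta=\beta$, the constraint reads $\Mat(P^i,2q^i,-\alpha)+\Mat(0,0,\beta)-{F^i}^\intercal\Mat_k F^i-{E^{ij}}^\intercal(Y+Z)E^{ij}\succeq0$. The key observation is the matrix monotonicity $\Mat_k\preceq\Mat(\Id,0,0)$: indeed $\Mat(\Id,0,0)-\Mat_k$ is the lift-matrix of the nonnegative quadratic form $x\mapsto\sum_{m\ne k}x_m^2$, hence positive semidefinite by Lemma~\ref{quadextension}, so ${F^i}^\intercal\Mat_k F^i\preceq{F^i}^\intercal\Mat(\Id,0,0)F^i$ and replacing $\Mat_k$ by $\Mat(\Id,0,0)$ only shrinks the left-hand side. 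Taking $Y+Z$ to be the matrix of the third assertion of Proposition~\ref{PQLprop}, that assertion is exactly $\Mat(P^i,2q^i,-\alpha)+\Mat(0,0,\beta)-{F^i}^\intercal\Mat(\Id,0,0)F^i-{E^{ij}}^\intercal(Y+Z)E^{ij}\succeq0$, so the required inequality follows. Splitting this certificate as $Y=\pos{U^{ij}}+\begin{psmallmatrix}0_{n_i}&0_{n_i,n_j}\\0_{n_j,n_i}&\pos{W^j}\end{psmallmatrix}\in\Sp{n_{ij}}$ and $Z=\psd{U^{ij}}+\begin{psmallmatrix}0_{n_i}&0_{n_i,n_j}\\0_{n_j,n_i}&\psd{W^j}\end{psmallmatrix}\in\Sdp{n_{ij}}$ keeps $Y$ entrywise nonnegative and $Z$ positive semidefinite, so $(\lambda,\beta,Y,Z)$ is feasible and $\rel{F_{ij,k}}(\betab^0)\le\beta$.

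Combining the two items gives $\rel{F_l}(\betab^0)\le\betab^0_l$ for all $l$, that is $\rel{F}(\betab^0)\le\betab^0$. The only non-routine step is realizing that the auxiliary inequality in Proposition~\ref{PQLprop}(3)---which already fuses the boundedness constraint at cell $j$ with the stability constraint at $(i,j)$---is exactly the dual certificate needed for the coordinate components once $\Mat_k$ is relaxed to $\Mat(\Id,0,0)$; the remainder is the bookkeeping that separates this copositive certificate into its entrywise-nonnegative and positive-semidefinite summands to meet the feasibility requirements $Y\in\Sp{n_{ij}}$, $Z\in\Sdp{n_{ij}}$ of~\eqref{relaxedSDP}.
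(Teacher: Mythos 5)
Your proof is correct and follows essentially the same route as the paper: the same multiplier $\bar{\lambda}=(0,\ldots,0,1)$, the witness $\eta=\alpha$ with $(\pos{U^{ij}},\psd{U^{ij}})$ reducing the constraint to~\eqref{stabilityeqrelax} for coordinate $d+1$, and for $k\in[d]$ the relaxation $\Mat_k\preceq\Mat(\Id,0,0)$ combined with the third assertion of Prop.~\ref{PQLprop}, with the certificate split into the same entrywise-nonnegative and positive-semidefinite blocks. You merely make explicit two details the paper leaves implicit (the PSD justification of $\Mat(\Id,0,0)-\Mat_k$ via Lemma~\ref{quadextension}, and $X_k^0\leq\beta$ via $y_k^2\leq\norm{y}_2^2$), which is fine.
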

\begin{proof}
In appendix.
\end{proof}
\paragraph*{Smallest fixed point computation associated to a policy}
For the third step of Algorithm~\ref{PIQua}, using Lemma ~\ref{relaxedviainfsup}, 
$F^{\pi}$ is monotone and affine, we compute the smallest fixed point of $F^{\pi}$ 
by solving the following Linear Program see ~\cite[Section 4]{gaubert2007static}:
\begin{equation}
\label{LPfix}
\min\left\{\sum_{k=1}^{d+1} w_k\ \mathrm{s.t.}\ F^{\pi}(w)\leq w\right\}
\end{equation}
\paragraph*{Convergence}
In~\cite{adjepolicy}, it is proved that policy iterations in the quadratic setting converges towards 
a fixed point of our relaxed functional. Here we establish a similar result (Th.~\ref{decro}).
Combined with Prop.~\ref{abstractprop}, this fixed point provides a 
safe overapproximation of the reachable values set.

Let consider the sequence $(w^l)_{l\geq 0}$ computed by Algorithm~\ref{PIQua}.
If for some $l\in\mathbb{N}$, $w^l\notin\FS$ and $w^{l-1}\in\FS$, then we set $w^k=w^l$ for all $k\geq l$.
\begin{theorem}
\label{decro}
The following assertions hold:
\begin{enumerate}
\item For all $l\in\mathbb{N}$, $\rel{F}(w^l)\leq w^l$;
\item The sequence $(w^l)_{l\geq 0}$ is decreasing. Moreover for all $l\in\nn$ such that $w^{l-1}\in\FS$
either $w^l=w^{l-1}$ and $\rel{F}(w^l)=w^l$ or $w^l<w^{l-1}$;
\item For all $l\in\mathbb{N}$, for all $k\in [d+1]$, $X_k^0\leq w_k^l\leq w_k^0$;
\item The limit $w^\infty$ of $(w^l)_{l\geq 0}$ satisfies: $\rel{F}(w^\infty)\leq w^\infty$. Moreover if $\forall\, k\in\nn$, $w^k\in\FS$ then $\rel{F}(w^\infty)=w^\infty$.
\end{enumerate}
\end{theorem}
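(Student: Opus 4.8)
The plan is to establish the four assertions of Theorem~\ref{decro} in order, since each naturally depends on the previous ones and on the monotonicity and selection machinery built up in Propositions~\ref{relaxedprop} and the definition of the policy functions $F^\pi$.

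\textbf{First assertion.} The claim $\rel{F}(w^l)\leq w^l$ for all $l$ proceeds by induction on $l$. The base case $l=0$ is exactly Proposition~\ref{initialprop}. For the inductive step, I would use the fact that $w^l$ is, by Step~3 of Algorithm~\ref{PIQua}, the smallest fixed point of $F^{\pi^l}$, where $\pi^l$ is the policy selected at point $w^{l-1}$. By the selection property defining $\pi^l$ (Step~5), we have $\rel{F}(w^{l-1})=F^{\pi^l}(w^{l-1})$, and since $F^{\pi^l}$ is affine and monotone (Proposition~\ref{relaxedprop}) while each $F_{ij,l}^{\lambda}$ is an \emph{upper} bound giving $\rel{F_{ij,l}}\leq F_{ij,l}^{\lambda}$ pointwise, the key inequality $\rel{F}\leq F^{\pi^l}$ holds everywhere. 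Evaluating at the fixed point $w^l$ of $F^{\pi^l}$ then gives $\rel{F}(w^l)\leq F^{\pi^l}(w^l)=w^l$.

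\textbf{Second assertion.} To show the sequence is decreasing, I would exploit that $w^{l-1}$ satisfies $\rel{F}(w^{l-1})\leq w^{l-1}$ (by the first assertion) and that $F^{\pi^l}(w^{l-1})=\rel{F}(w^{l-1})\leq w^{l-1}$. Thus $w^{l-1}$ is a post-fixed point of the monotone affine map $F^{\pi^l}$, and since $w^l$ is the \emph{smallest} fixed point of $F^{\pi^l}$ (computed by the linear program~\eqref{LPfix}, which by Tarski/Kleene coincides with the least post-fixed point), we obtain $w^l\leq w^{l-1}$. For the dichotomy, if the termination test in Step~5 succeeds then $\rel{F}(w^{l-1})=w^{l-1}$, so $w^{l-1}$ is already a fixed point of $\rel{F}$ and hence a post-fixed point of $F^{\pi^l}$ that equals the least one; otherwise strictness $w^l<w^{l-1}$ follows because $w^{l-1}$ is not a fixed point of $F^{\pi^l}$ (it strictly dominates $\rel{F}(w^{l-1})$ in at least one coordinate, by the failed equality test), and a monotone affine map whose least fixed point lies below a strict post-fixed point yields a strictly smaller value.

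\textbf{Third and fourth assertions.} The lower bound $X_k^0\leq w_k^l$ comes directly from the definition $F_l^\pi(\betab)=\sup\{\sup_{(i,j)}F_{ij,l}^{\pi_{ij,l}}(\betab),X_l^0\}$: any fixed point $w^l$ of $F^{\pi^l}$ satisfies $w_k^l\geq X_k^0$, and the upper bound $w_k^l\leq w_k^0$ is the decreasing property just proved. For the fourth assertion, the sequence is decreasing and bounded below by $X^0$ coordinatewise, hence converges to some limit $w^\infty$. Passing $\rel{F}(w^l)\leq w^l$ to the limit and using that $\rel{F}$ is upper semi-continuous (Proposition~\ref{relaxedprop}, third item) gives $\rel{F}(w^\infty)\leq \limsup_l \rel{F}(w^l)\leq w^\infty$. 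If every $w^k\in\FS$, the selection property is available at each step and the equality $\rel{F}(w^\infty)=w^\infty$ follows: along the subsequence of policies, a compactness/stabilisation argument on the finitely many policy choices forces the selection to eventually reproduce a policy whose fixed point is $w^\infty$, at which point the selection equality $\rel{F}(w^\infty)=F^{\pi}(w^\infty)=w^\infty$ holds.

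\textbf{Main obstacle.} The delicate point is the limit equality in the fourth assertion: establishing $\rel{F}(w^\infty)=w^\infty$ rather than merely $\leq$. Upper semi-continuity only yields the one-sided inequality, so the reverse requires genuinely using that each $w^k\in\FS$ supplies an optimal dual $\lambda^k\in\Sol{(i,j),l,w^k}$. The technical heart is to control the behaviour of these dual selections in the limit, invoking the representation~\eqref{affine2} and the fact that $F^\pi$ and $\rel{F}$ agree at the point where the policy was selected, to close the gap and certify that $w^\infty$ is a genuine fixed point of the relaxed functional.
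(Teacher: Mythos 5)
Your treatment of assertions (1)--(3) is essentially the paper's own argument: the base case via Prop.~\ref{initialprop}, the inequality $\rel{F}(w^l)\leq F^{\pi^l}(w^l)=w^l$ from the representation $\rel{F}=\inf_\pi F^\pi$, and the comparison $w^l\leq w^{l-1}$ from the least-post-fixed-point property of Step~3 (the paper additionally records the routine bookkeeping for the stagnation case $w^{l-1}\notin\FS$, which your sketch omits). One caveat on (2): the paper argues by contraposition --- if $w^l=w^{l-1}$ then $\rel{F}(w^{l-1})=F^{\pi^l}(w^{l-1})=F^{\pi^l}(w^l)=w^l$ --- rather than proving strict decrease directly. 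Your claim that a monotone affine map whose least fixed point sits below a post-fixed point that is ``strict in at least one coordinate'' yields a strictly smaller value is not justified as stated (strictness in one coordinate does not propagate to all coordinates, which is what $w^l<w^{l-1}$ means under the paper's conventions); the contrapositive route avoids this entirely.

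The genuine gap is in assertion (4). First, a direction error: upper semi-continuity gives $\limsup_l \rel{F}(w^l)\leq \rel{F}(w^\infty)$, which is the \emph{opposite} of the inequality $\rel{F}(w^\infty)\leq \limsup_l \rel{F}(w^l)$ you invoke; the bound $\rel{F}(w^\infty)\leq w^\infty$ should come from monotonicity alone ($w^\infty\leq w^l$ gives $\rel{F}(w^\infty)\leq \rel{F}(w^l)\leq w^l$, then take the infimum over $l$). More seriously, your proof of the reverse inequality rests on ``a compactness/stabilisation argument on the finitely many policy choices,'' but the policy space here is \emph{not} finite: a policy selects, for each $((i,j),l)$, a Lagrange multiplier vector $\lambda\in\Sol{(i,j),l,\betab}\subseteq\rr_+^{d+1}$, a continuum, so no eventual stabilisation of policies can be asserted and this step fails. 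The paper closes the gap without any stabilisation: since $w^l\in\FS$, choose $\pi^{l+1}$ with $F^{\pi^{l+1}}(w^l)=\rel{F}(w^l)$; monotonicity of $F^{\pi^{l+1}}$ together with $w^{l+1}\leq w^l$ gives $w^{l+1}=F^{\pi^{l+1}}(w^{l+1})\leq F^{\pi^{l+1}}(w^l)=\rel{F}(w^l)$; taking the infimum over $l$ yields $w^\infty\leq \inf_l \rel{F}(w^l)$, and it is here --- in the correct direction --- that upper semi-continuity is genuinely used: $\inf_l \rel{F}(w^l)=\limsup_l \rel{F}(w^l)\leq \rel{F}(w^\infty)$. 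You had all the needed ingredients (selection equality, monotonicity of $F^\pi$, upper semi-continuity) but assembled them in the wrong places, and the finiteness assumption they were meant to replace is unavailable in this setting.
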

\begin{proof}
(1)~From Prop.~\ref{initialprop}, $\rel{F}(w^0)\leq w^0$. Now, let $l>0$ and assume $w^{l-1}\in\FS$, there exists $\pi^l$ such that, $F^{\pi^l}(w^l)=w^l$ and since 
$\rel{F}=\inf_{\pi} F^{\pi}$, we get $\rel{F}(w^l)\leq F^{\pi^l}(w^l)=w^l$. If $w^{l-1}\notin\FS$, then there exists $k\in\nn$, 
$k\leq l-1$ such that $w^{k-1}\in\FS$ and $w^l=w^k$, and thus by the latter argument we have $\rel{F}(w^k)\leq w^k$.

(2)~Let $l\in\nn$, if $w^{l-1}\notin\FS$, $w^l=w^{l-1}$. Now suppose $w^{l-1}\in\FS$. There exists $\pi^l\in\Pi$ such that
$\rel{F}(w^{l-1})=F^{\pi^l}(w^{l-1})\leq w^{l-1}$ and since $w^l$ is the smallest element of $\{v\in\rr^{d+1}\mid F^{\pi^{l}}(v)\leq v\}$
then $w^l\leq w^{l-1}$. Now if $w^l=w^{l-1}$, $\rel{F}(w^{l-1})=\rel{F}(w^{l})=F^{\pi^l}(w^{l-1})= F^{\pi^l}(w^{l})=w^l=w^{l-1}$.

(3)~From the second assertion, for all $l\in\nn$, $w^l\leq w^0$. Moreover, for all $k\in [d+1]$,  
$X_k^0\leq \sha{F}_k(w^l)\leq \rel{F}_k(w^l)\leq w_k^l$.

(4)~First, $w^\infty$ exists since $(w_l)_{l\in\nn}$ is decreasing and bounded from below (third assertion). Then, for all $l\in\nn$, $w^\infty\leq w^l$ and thus since $\rel{F}$ is monotone 
(Prop.~\ref{relaxedprop}) $\rel{F}(w^\infty)\leq \rel{F}(w^l)\leq w^l$. Taking the infimum over $l$, we get $\rel{F}(w^\infty)\leq w^\infty$.
 Now we prove that $w^{\infty}\leq \rel{F}(w^{\infty})$. Let $l\in\nn$. By assumption, $w^l\in\FS$ and then, there exists $\pi^{l+1}\in\Pi$ such that 
$F^{\pi^{l+1}}(w^l)=\rel{F}(w^l)$. Moreover, $w^{l+1}\leq w^l$ and since $F^{\pi^{l+1}}$ is monotone (Prop.~\ref{relaxedprop}): 
$w^{l+1}=F^{\pi^{l+1}}(w^{l+1})\leq F^{\pi^{l+1}}(w^l)=\rel{F}(w^l)$. 
Now by taking the infimum on $l$, we get 
$w^{\infty}=\inf_l w^{l+1}=\inf_l w^l\leq \inf_l \rel{F}(w^l)$. Finally 
since $\rel{F}$ is upper semicontinuous (third point of Prop.~\ref{relaxedprop}), then $\inf_k \rel{F}(w^k)=\limsup_k \rel{F}(w^k)\leq  \rel{F}(\lim_k w^k)= \rel{F}(w^{\infty})$. We conclude that $w^{\infty}\leq \rel{F}(w^{\infty})$.
\end{proof}

\section{Example}
\label{example}
\subsection{Example from~\cite{912814} slighty modified}
Consider the followinf PWA: $X^0=[-1,1]\times [-1,1]$, and, for all $k\in\nn$:
\[
\begin{array}{l}
x_{k+1}= 
\left\{ 
 \begin{array}{lr}
   A^1 x_k& \text{ if } x_{k,1} \geq 0\text{ and } x_{k,2} \geq  0\\
   A^2 x_k & \text{ if } x_{k,1} \geq 0\text{ and } x_{k,2} < 0\\
   A^3 x_k& \text{ if } x_{k,1} < 0\text{ and } x_{k,2} < 0\\
   A^4 x_k &\text{ if } x_{k,1} < 0\text{ and } x_{k,2} \geq 0
 \end{array}
 \right.
 \end{array}
\]
with 
\[
\begin{array}{c}
A^1=\begin{pmatrix}
    -0.04 & -0.461\\
    -0.139 & 0.341
    \end{pmatrix},\ 
A^2=\begin{pmatrix}
    0.936 & 0.323\\
    0.788 & -0.049
   \end{pmatrix}\\ 
A^3=\begin{pmatrix}
    -0.857 & 0.815\\
    0.491 & 0.62
    \end{pmatrix},\ 
A^4=\begin{pmatrix}
    -0.022 & 0.644\\
    0.758 & 0.271
    \end{pmatrix} 
\end{array}
\]
Then, we have $X^1=\rr_+\times\rr_+$, $X^2=\rr_+\times \rr_{-}^*$, $X^3=\rr_-^*\times \rr_-^*$ and $X^4=\rr_-^*\times \rr_+$.

From Prop.~\ref{propSwitches}, $\In=\{1,2,3,4\}$ and $\overline{\Sw}=\{(i,j)\mid S(i,j)=1\}$
with $S=\begin{psmallmatrix} 1 & 0 & 1 &1\\1 & 0 & 0 & 1\\ 0& 1 & 1 & 0\\1 & 1& 0 & 0\end{psmallmatrix}$.

By solving Problem~\ref{SDPlyap}, we get a (optimal) PQL function $L$ characterized by the following matrices:
\[
\begin{array}{c}
P^1=
\begin{pmatrix}
  1.1178 & -0.1178 \\
 -0.1178 &  1.1178 \\
\end{pmatrix},\
P^2=
\begin{pmatrix}
 1.5907   & 0.5907\\
    0.5907  &  1.5907
\end{pmatrix},\\
P^3=
\begin{pmatrix}
    1.3309  & -0.3309\\
   -0.3309  &  1.3309
\end{pmatrix},\
P^4=
\begin{pmatrix}
 1.2558   & 0.2558\\
    0.2558 &   1.2558
\end{pmatrix}
\end{array}
\]
Since $\alpha=\beta=2$, then $\rea\subseteq \{x\in\rr^2\mid L(x)\leq 2\}\subseteq \{x\in\rr^2\mid \norm{x}_2^2\leq 2\}$. The sets 
$\rea$ (discretized version) and $\{x\in\rr^2\mid L(x)\leq 2\}$ are depicted at Figure~\ref{firstinv}.
Then we enter into policy iteration algorithm. From Equation~\eqref{initialpoint}, we define $w^0$ by:
\[
w_1^0=2.0000,\ 
w_2^0=2.0000,\ 
w_3^0=2.0000
\]  
Then we compute the image of $w^0$ by the relaxed semantics $\rel{F}(w^0)$ using semidefinite programming (see Eq.~\eqref{relaxedSDP}). We check that $w^0$ is not a fixed point of $\rel{F}$ and then the initial policy $\pi^0((i,j),l,w^0)$ is the vector $\lambda$ extracted from the optimal solutions $(\lambda,Y,Z)$ of the semidefinite programs involved in the computation of $\rel{F}(w^0)$. For example, for $(1,3)\in\overline{\Sw}$ and $l=1$, $\pi^0((1,3),1,w^0)=(0.0000, 0.0000, 0.0430)^\intercal$, where the first two zeros are the Lagrange multipliers associated to $\Mat_1$ and $\Mat_2$
and $0.0430$ is the Lagrange multiplier associated to $\Mat(L^1)$. 
\comment{
Note that the solver also provides a nonnegative matrix:
\[ 
Y=\begin{pmatrix}
  0.0000 &  0.0000 &  0.0000 &  0.0000 &  0.0000 &  0.0001 \\
  0.0000 &  0.0000 &  0.0000 &  0.0000 &  0.0000 &  0.0193 \\
  0.0000 &  0.0000 &  0.0000 &  0.0000 &  0.0000 &  0.4096 \\
  0.0000 &  0.0000 &  0.0000 &  0.0000 &  0.0000 &  0.0001 \\
  0.0000 &  0.0000 &  0.0000 &  0.0000 &  0.0000 &  0.3475 \\
  0.0001 &  0.0193 &  0.4096 &  0.0001 &  0.3475 &  0.3452 \\
\end{pmatrix}
\]
and a semidefinite matrix:
\[ 
Z=\begin{pmatrix}
  0.3640 &  0.0000 &  0.0001 & -0.3640 & -0.0002 &  0.0003 \\
  0.0000 &  0.0226 & -0.0315 & -0.0000 & -0.0428 & -0.0877 \\
  0.0001 & -0.0315 &  0.2109 & -0.0001 & -0.2389 &  0.1726 \\
 -0.3640 & -0.0000 & -0.0001 &  0.3640 &  0.0002 & -0.0003 \\
 -0.0002 & -0.0428 & -0.2389 &  0.0002 &  0.6150 &  0.0763 \\
  0.0003 & -0.0877 &  0.1726 & -0.0003 &  0.0763 &  0.6571 \\
\end{pmatrix}
\]
}
We compute the smallest fixed point associated to $\pi^0$ using the linear program~\eqref{LPfix}:
\[
w_1^1=1.1036,\ 
w_2^1=1.2443,\ 
w_3^1=2.0000
\]
Moreover, at each step $k$, policy iterations provides auxiliary values which represent the overapproximations of the polyhedra $\rea\cap X^i\cap {A^i}^{-1}(X^j)$
by ellipsoids of the form $\{x\in\rr^2\mid x_{1}^2\leq w_{ij,1}^k,\ x_{2}^2\leq w_{ij,2}^k,\ L(x_{1},x_{2})\leq w_{ij,3}^k\}$. For example, for $k=0$:
\[
\begin{array}{ccc}
w_{11,1}=0.0000,\ 
w_{11,2}=0.0000,\ 
w_{11,3}=0.0000
\\
w_{13,1}=0.0573,\ 
w_{13,2}=0.0213,\ 
w_{13,3}=0.0213
\\
w_{14,1}=0.3012,\ 
w_{14,2}=0.1447,\ 
w_{14,3}=0.1447
\end{array}
\]
Note that we found that for $(i,j)=(1,1)$, $w_{ij,1}^1=w_{ij,2}^1=w_{ij,3}^1=0$ which means that $\rea\cap X^1\cap {A^{1}}^{-1}(X^1)$ is reduced to the singleton $(0,0)$. The invariant found is depicted at Figure~\ref{finalinv}.
\begin{figure}
\centering
\captionsetup[subfigure]{width=3.5cm}
\subfloat[First overapproximation found by~\eqref{SDPlyap}]{\label{firstinv}
\centering
 \includegraphics[width=.3\textwidth]{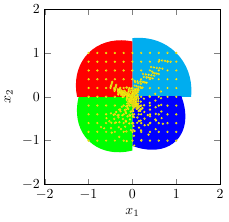}
}
\hspace{30pt}
\subfloat[Final overapproximation found by policy iterations]
{\label{finalinv}
\centering
\includegraphics[width=.3\textwidth]{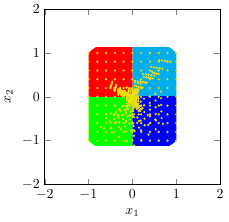}
}
\caption{(Discretized) $\rea$ in yellow and initial and last overapproximations of $\rea$.}
\label{invs}
\end{figure}
Finally, we find after two iterations that for all $k\in\nn$, $x_{1,k}^2\leq 1,\ x_{2,k}^2\leq 1.2443$ and $L(x_{1,k},x_{2,k})\leq 2$. 
\subsection{A (piecewise) affine example} 
 We now consider the following PWA: $X^0=[0,3]\times [0,2]$ and for all $k\in\nn$:
 \[
\begin{array}{l}
x_{k+1}= 
\left\{ 
 \begin{array}{lr}
   A^1 x_k+b^1& \text{ if } T(x_{k}) < c\\
   A^2 x_k+b^2 & \text{ if } T(x_{k}) \geq c\\
 \end{array}
 \right.
 \end{array}
\]
with
\[ 
\begin{array}{c}
A^1=\begin{pmatrix}
  0.4197 & -0.2859 \\
  0.5029 &  0.1679 \\
\end{pmatrix},\quad b^1=\begin{pmatrix}
  2.0000 \\  5.0000 
\end{pmatrix},\\
A^2=\begin{pmatrix}
 -0.0575 & -0.4275 \\
 -0.3334 & -0.2682 \\
\end{pmatrix},\quad b^2=\begin{pmatrix}
 -4.0000 \\  4.0000
\end{pmatrix}\\
\\
T=\begin{pmatrix}  3.0000 &  8.0000\end{pmatrix}\text{ and } c=-3.0000
\end{array}
\]
By Prop.~\ref{propSwitches}, $\overline{\Sw}=\ind^2=\{(1,1),(1,2),(2,1),(2,2)\}$ and $\In=\{2\}$. Using Problem~\eqref{SDPlyap}, 
we compute the PQL function $L$ characterized by:
\[
\begin{array}{c}
P^1=
\begin{pmatrix}
2.9888 & -1.7890 \\
-1.7890 &  8.0295
\end{pmatrix},\quad q^1=\begin{pmatrix}
-14.7283\\
-94.1347
\end{pmatrix}\\
\text{and}\\
P^2=\begin{pmatrix}
2.7192 &  2.0930 \\
2.0930 &  6.1110
\end{pmatrix},\quad q^2=\begin{pmatrix}
5.5737\\
-16.4198
\end{pmatrix}\\
\end{array}
\]
and the invariant found is $\{x\in\rr^2\mid L(x)\leq 58.1165\}$ and an upper bound over the square Euclidian norm of the state variable is $286.4932$.
We run the policy iteration to get finally after 4 iterations the following bound vector:
\[
w_1=41.8956,\ 
w_2=31.4449,\  
w_3=58.1165
\]
corresponding to the invariant set $w\{x\in\rr^2\mid x_i^2\leq w_i,\ L(x)\leq w_3\}$. 

We obtain interesting information during policy iterations running.
At step $k=0$, when we select the initial policy, the SDP solver returns for all $l=1,2,3$, $\rel{F}_{11,l}(w^0)=-\infty$ and from 
Prop.~\ref{abstractprop} this implies that $\sup_{x\in \rea\cap X^1\cap {f^1}^{-1}(X^1)} p(A^1x+b^1)$
is not feasible hence $(1,1)\notin\Sw$.
At iteration step $k=1$, the SDP solver provides for all $l=1,2,3$, $\rel{F}_{21,l}(w^1)=-\infty$ and from 
Prop.~\ref{abstractprop} this implies that $\sup_{x\in \rea\cap X^1\cap {f^2}^{-1}(X^2)} p(A^2x+b^2)$
is not feasible hence $(2,1)\notin\Sw$. 
Finally, $\Sw\subseteq \{(1,2),(2,2)\}$. Recalling that $1\notin \In$, 
we conclude that the system state variable only stays in $X^2$ and thus the system is actually
equivalent to a \emph{constrained affine system}. This information is computed \emph{automatically}.  
\comment{
We run the policy iteration to get finally after 3 iterations the following bound vector
\[
w(x_{1}^2)=1.0000,\ 
w(x_{2}^2)=1.8255,\ 
w(x_{3}^2)=1.0456,\ 
w(x_{4}^2)=1.1831,\ 
w(L)=4.0000
\]
corresponding to the invariant set $w^\star=\{x\in\rr^4\mid x_i^2\leq w(x_i^2),\ L(x)\leq w(L)\}$. 

The interesting information we get during the policy iteration is that actually 
the set of possible switches is included in $\{(1,1),(1,2),(2,1)\}$ i.e. we have found that we cannot reach 
the second cell from the second cell according to the second law $A^2$. 

This information is actually available when we evaluate the policy $\pi^1$ for the coordinate $(2,2)$. 
Indeed, the initial policy yields to the bound function $w^0$:
\[
w^0(x_{1}^2)= w^0(x_{2}^2)= w^0(x_{3}^2)= w^0(x_{4}^2)= w^0(L)=4.0000
\]
and ${w^0}^\star=\{x\in\rr^4\mid x_i^2\leq w^0(x_i^2),\ L(x)\leq w^0(L)\}$.
When we evaluate $\rel{F}_{22}(w^0)(p)$ whatever the template $p\in\pp$, $\rel{F}_{22}(w^0)(p)=-\infty$ and from 
Proposition~\ref{abstractprop} this implies that 
\[
 \mybrackets{\sha{F_{22}}}(w^0)(p)=\sup_{x\in {w^0}^{\star}\cap X^2\cap {f^2}^{-1}(X^2)} p(A^2x)
\]
is not feasible hence $(2,2)\notin\Sw$.
}

\section{Conclusion and Future Works}
\label{conclusion}
We have developed a method to compute \emph{automatically} by semi-definite programming precise bounds over the reachable values set of a piecewise affine system. 
The method combines piecewise quadratic Lyapunov functions to generate a first overapproximation and policy iterations used to reduce the initial overapproximation.

Future works could be to design a repartitioning method in order to improve the feasibility of Problem~\eqref{SDPlyap}. Morevoer, we can think 
of apply the method to maximize a quadratic form over the reachable values set.  

Also, we conjecture that the presented policy iterations algorithm provides the most precise overapproximation considering bounding the square of coordinates variables.
To reduce these bounds we have to choose a different set of quadratic functions.  

\bibliographystyle{alpha}
\bibliography{piecewiselyapunovpolicybib}

\newpage

\section*{Appendix}
\label{appendix}
In the appendix, we give details about the proofs of the propositions.
\begin{proposition}
Assume that Problem~\eqref{SDPlyap} has a feasible solution $(\Pq,\mathcal{W},\mathcal{U},\mathcal{Z},\alpha,\beta)$. Then:
\begin{enumerate}
\item The family $\Pq$ defines a PQL;
\item There exists $(\Pq,\mathcal{W},\mathcal{U},\mathcal{Z},\alpha,\beta)$ satisfiying~\eqref{boundedeqrelax},~\eqref{stabilityeqrelax}
and~\eqref{initialeqrelax} if and only if Problem~\eqref{SDPlyap} is feasible;
\item For all $(i,j)\in \overline{\Sw}$,
\[
\begin{array}{ll}
& {F^i}^\intercal\Mat(\Id,0,0)F^i\\
\preceq & \Mat(P^i,2q^i,-\alpha)+\Mat(0,0,\beta)\\
&-{E^{ij}}^\intercal\left( \begin{pmatrix}0_{n_i}  & 0_{n_i,n_j} \\
0_{n_j,n_i} & \pos{W^j}+\psd{W^j}\end{pmatrix}+\pos{U^{ij}}+\psd{U^{ij}}\right) E^{ij}\enspace ; 
\end{array}
\]
\item We have $\displaystyle{\sup_{x\in X^0} \norm{x}_2^2} \leq \beta$;
\item If $(\Pq,\mathcal{W},\mathcal{U},\mathcal{Z},\alpha,\beta)$ is optimal and $\alpha>0$ then $\displaystyle{\sup_{x\in X^0} L(x)}=\alpha$.
\end{enumerate}
\end{proposition}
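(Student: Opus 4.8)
The plan is to dispatch the five claims in increasing order of difficulty, extracting the purely structural ones from the relaxed constraints first and reserving the optimality argument for last. For the first claim I would read each relaxed inequality through Corollary~\ref{copositiverestriction}: constraint~\eqref{boundedeqrelax} asserts $\Mat(P^i,2q^i,-\alpha)-\Mat(\Id,0,-\beta)-{E^i}^\intercal(\pos{W^i}+\psd{W^i})E^i\succeq 0$, so the inclusion~\eqref{eqcorollary} places $\Mat(P^i,2q^i,-\alpha)-\Mat(\Id,0,-\beta)$ in $\ccop{d+1}{E^i}$, which is exactly~\eqref{boundedeq}. The same reading turns~\eqref{stabilityeqrelax} into~\eqref{stabilityeq} (and since $\Sw\subseteq\overline{\Sw}$ the latter holds in particular on $\Sw$) and~\eqref{initialeqrelax} into~\eqref{initialeq}, so $\Pq$ satisfies Definition~\ref{piecewisequad} and defines a PQL.

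Next I would prove the norm bound (fourth claim), which drives everything else. Fix $x\in X^0$; by the partition~\eqref{partition} there is a unique $i\in\In$ with $x\in X^{i0}\subseteq X^i$. Feeding~\eqref{boundedeqrelax} through Corollary~\ref{copositiverestriction}, Lemma~\ref{polyhedralcopo} and Lemma~\ref{overpol} gives $L^i(x)-\alpha\geq\norm{x}_2^2-\beta$, while the same chain applied to~\eqref{initialeqrelax} gives $\alpha-L^i(x)\geq 0$ on $X^{i0}$; adding these yields $\beta-\norm{x}_2^2\geq\alpha-L^i(x)\geq 0$, so $\norm{x}_2^2\leq\beta$ and hence $\sup_{x\in X^0}\norm{x}_2^2\leq\beta$. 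In particular $\beta\geq 0$ whenever the three relaxed inequalities hold, which settles the second claim: one direction is immediate, and for the converse I would observe that the substitution $(\alpha,\beta)\mapsto(\alpha+t,\beta+t)$ leaves~\eqref{boundedeqrelax} unchanged (its constant corner is $\beta-\alpha$), leaves~\eqref{stabilityeqrelax} untouched, and only relaxes~\eqref{initialeqrelax}; taking $t=\max\{-\alpha,0\}$ produces a solution with $\alpha\geq 0$ and $\beta\geq 0$, i.e. a feasible point of~\eqref{SDPlyap}.

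For the third claim I would combine two constraints by congruence. Conjugating~\eqref{boundedeqrelax} at index $j$ by $F^i$, and using the identity ${F^i}^\intercal\mathbf{N}F^i=\mathbf{N}$ (valid because the first row of $F^i$ is $(1,0_{1\times d})$, so the $\alpha$- and $\beta$-corners survive the congruence), lower-bounds ${F^i}^\intercal\Mat(P^j,2q^j,0)F^i$; substituting this bound into~\eqref{stabilityeqrelax} for $(i,j)$ and using $\Mat(P^i,2q^i,0)-\Mat(0,0,\alpha)=\Mat(P^i,2q^i,-\alpha)$ gives the displayed inequality, with the $U^{ij}$- and $W^j$-terms collected inside a single congruence by $E^{ij}$. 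The delicate point is the $W^j$-term: since $E^jF^i=R\,E^{ij}$ where $R$ selects the constant row together with the $T^jA^i$-rows, the quantity $(E^jF^i)^\intercal(\pos{W^j}+\psd{W^j})(E^jF^i)$ rewrites as $E^{ij}$ conjugating the block that vanishes on the $T^i$-rows and equals $\pos{W^j}+\psd{W^j}$ elsewhere; matching this embedding with the block matrix in the statement is pure bookkeeping.

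Finally, the fifth claim: the inequality $\sup_{x\in X^0}L(x)\leq\alpha$ is already contained in the norm-bound computation (it is the intermediate estimate $L^i(x)\leq\alpha$ on $X^{i0}$). The reverse inequality is the genuine obstacle and is where optimality must enter. Assuming $\gamma:=\sup_{x\in X^0}L(x)<\alpha$, I would decrease $\alpha$ to some $\alpha'\in(\gamma,\alpha)$ while keeping $\beta$, $\Pq$, $\mathcal{W}$, $\mathcal{U}$ fixed, which strictly lowers the objective $\alpha+\beta$ and can only relax~\eqref{boundedeqrelax}; the whole difficulty is to re-establish~\eqref{initialeqrelax} at the smaller value. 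Lowering $\alpha$ by $\alpha-\alpha'$ subtracts $(\alpha-\alpha')\mathbf{N}$ from the initial matrix, and since the first row of $E^{i0}$ is $(1,0_{1\times d})$ one has $\mathbf{N}={E^{i0}}^\intercal(e_1e_1^\intercal)E^{i0}$, so I would compensate by removing the corresponding mass from the $(1,1)$ entry of the multiplier $\pos{Z^{i0}}+\psd{Z^{i0}}$, the room for doing so being provided by the slack $\gamma<\alpha$ and by the normalizing row singled out in Example~\ref{reason_of_one}. Producing a \emph{valid} perturbed multiplier (nonnegative plus semidefinite) from this slack, and thereby contradicting optimality, is the crux of the argument; once it goes through, $\sup_{x\in X^0}L(x)=\alpha$ is forced.
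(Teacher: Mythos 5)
Your treatment of claims (1)--(4) is correct and follows essentially the paper's own route. For (1) you invoke Corollary~\ref{copositiverestriction} exactly as the paper does; for (2) your shift $(\alpha,\beta)\mapsto(\alpha+t,\beta+t)$ with $t=\max\{-\alpha,0\}$ produces precisely the paper's feasible point $(0,\beta-\alpha)$, with the same three checks on the constraints; for (3) you perform the same congruence of~\eqref{boundedeqrelax} at index $j$ by $F^i$, combine with~\eqref{stabilityeqrelax}, and use ${F^i}^\intercal\mathbf{N}F^i=\mathbf{N}$ --- in fact your explicit factorization $E^jF^i=R\,E^{ij}$ through a row-selection matrix is more careful than the paper's laconic ``we conclude by the definition of $E^{ij}$'', and it correctly flags that the constant row of $E^jF^i$ must be matched with the first row of $E^{ij}$, a subtlety the paper's displayed block matrix glosses over. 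For (4) you argue directly from~\eqref{boundedeqrelax} and~\eqref{initialeqrelax} via Lemma~\ref{polyhedralcopo} and Lemma~\ref{overpol}, where the paper simply cites Th.~\ref{boundedth} together with $X^0\subseteq\rea$; the content is the same, since your computation is exactly what drives that theorem's proof, and your route has the small benefit of not needing $X^0\subseteq\rea$ explicitly.

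Claim (5) is where your proposal has a genuine, and self-acknowledged, gap: you prove $\sup_{x\in X^0}L(x)\leq\alpha$ (as the paper does, from~\eqref{initialeqrelax}), but you do not complete the contradiction with optimality. Concretely, your plan to compensate the subtraction of $(\alpha-\alpha')\mathbf{N}$ by removing $(\alpha-\alpha')e_1e_1^\intercal$ from the multiplier fails whenever the $(1,1)$ entry of $\pos{Z^{i0}}+\psd{Z^{i0}}$ is smaller than $\alpha-\alpha'$ --- nothing in feasibility prevents it from being $0$ --- and the slack $\gamma<\alpha$ does not by itself manufacture a valid certificate: converting the semantic bound $L^i\leq\gamma$ on $X^{i0}$ into multipliers $(\pos{Z},\psd{Z})$ is precisely the possibly lossy step of Corollary~\ref{copositiverestriction}, which is exact only under the rank and $d\leq 4$ hypotheses. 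For comparison, the paper perturbs in the opposite direction, so that multiplier validity is free: using ${E^{i0}}^\intercal\mathbf{N}E^{i0}=\mathbf{N}$ it writes
\[
-\Mat(P^i,2q^i,-\gamma)-{E^{i0}}^\intercal\left(\pos{Z^{i0}}+\psd{Z^{i0}}\right)E^{i0}
=-\Mat(L^i)+\alpha\mathbf{N}-{E^{i0}}^\intercal\left(\pos{Z^{i0}}+\epsilon\mathbf{N}+\psd{Z^{i0}}\right)E^{i0}
\]
and declares $(\Pq,\mathcal{W},\mathcal{U},\mathcal{Z}',\gamma,\beta)$ feasible with $\pos{Z^{i0}}+\epsilon\mathbf{N}\in\Sp{n_{i0}+1}$, which costs nothing since $\mathbf{N}$ is entrywise nonnegative. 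Note, however, that this identity relates the pair (new level $\gamma$, old multipliers) to the pair (old level $\alpha$, new multipliers); chasing the signs, the initial constraint at the point actually exhibited equals the old constraint matrix minus $2\epsilon\mathbf{N}$, and the hypothesis $\sup_{x\in X^0}L(x)\leq\gamma$ never enters the algebra. So your instinct that producing a valid perturbed certificate is the crux is borne out --- but as a blind reconstruction your proof of (5) stops exactly at that crux and is therefore incomplete.
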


\begin{proof}
(1) The first statement follows readily from Corollary~\eqref{eqcorollary}. 

(2) The "if" part is obvious. Let us focus on the "only if" part and let $(\Pq,\mathcal{W},\mathcal{U},\mathcal{Z},\alpha,\beta)$ satisfiying~\eqref{boundedeqrelax},~\eqref{stabilityeqrelax}. From Th.~\ref{boundedth}, $\beta\geq 0$. If $\alpha\geq 0$, the proof 
is finished. Hence, we suppose that $\alpha<0$ and let us prove that $(\Pq,\mathcal{W},\mathcal{U},\mathcal{Z},0,\beta-\alpha)$ is feasible for Problem~\eqref{SDPlyap}. First $\beta-\alpha\geq 0$ since $\beta\geq 0$ and $\alpha<0$. Second, 
$\Mat(P^i,2q^i,0)-\Mat(\Id,0,-(\beta-\alpha))-{E^i}^\intercal \left(\pos{W^i}+\psd{W^i}\right) E^i=
\Mat(P^i,2q^i,-\alpha)-\Mat(\Id,0,-\beta)-{E^i}^\intercal \left(\pos{W^i}+\psd{W^i}\right) E^i\succeq 0$
by the fact that $(\Pq,\mathcal{W},\mathcal{U},\mathcal{Z},\alpha,\beta)$ satisfies~\eqref{boundedeqrelax} and thus
$(\Pq,\mathcal{W},\mathcal{U},\mathcal{Z},0,\beta-\alpha)$ satisfies~\eqref{boundedeqrelax}. Since 
$\alpha$ and $\beta$ do not appear in~\eqref{stabilityeqrelax}, $(\Pq,\mathcal{W},\mathcal{U},\mathcal{Z},0,\beta-\alpha)$ satisfies~\eqref{stabilityeqrelax}. Finally,
\[
\begin{array}{cl}
 &-\Mat(P^i,2q^i,0)-{E^{i0}}^\intercal \left(\pos{Z^{0i}}+\psd{Z^{0i}}\right) E^{i0}\\
=&-\Mat(P^i,2q^i,\alpha-\alpha)-{E^{i0}}^\intercal \left(\pos{Z^{0i}}+\psd{Z^{0i}}\right) E^{i0}\\
=&\Mat(0,0,-\alpha)-\Mat(P^i,2q^i,-\alpha)-{E^{i0}}^\intercal \left(\pos{Z^{0i}}+\psd{Z^{0i}}\right) E^{i0}
\end{array}
\]
We conclude that $-\Mat(P^i,2q^i,0)-{E^{i0}}^\intercal \left(\pos{Z^{0i}}+\psd{Z^{0i}}\right) E^{i0}\succeq 0$ and 
thus $(\Pq,\mathcal{W},\mathcal{U},\mathcal{Z},0,\beta-\alpha)$ satisfies~\eqref{initialeqrelax}.

(3) Let $(i,j)\in\overline{\Sw}$. Since $j\in \ind$, 
\[
\Mat(P^j,2q^j,-\alpha)-\Mat(\Id,0,-\beta)-{E^j}^\intercal \left(\pos{W^j}+\psd{W^j}\right) E^j\succeq 0
\]
and thus 
\[
\begin{array}{r}
{F^i}^\intercal \left(\Mat(P^j,2q^j,-\alpha)-\Mat(\Id,0,-\beta)\right.\\
\left. -{E^j}^\intercal \left(\pos{W^j}+\psd{W^j}\right) E^j\right) F^i\succeq 0
\end{array}
\]
and 
\[
\begin{array}{l}
{F^i}^\intercal \Mat(P^j,2q^j,-\alpha)F^i-{F^i}^\intercal {E^j}^\intercal \left(\pos{W^j}+\psd{W^j}\right) E^j F^i\\
\succeq {F^i}^\intercal\Mat(\Id,0,-\beta)F^i
\end{array}
\]
Hence:
\[
\begin{array}{ll}
&{F^i}^\intercal\Mat(\Id,0,-\beta)F^i\\
\preceq& -{F^i}^\intercal {E^j}^\intercal \left(\pos{W^j}+\psd{W^j}\right) E^j F^i +\Mat(P^i,2q^i,0)\\
&-{E^{ij}}^\intercal \left(\pos{U^{ij}}+\psd{U^{ij}}\right) E^{ij}
\end{array}
\]
Note that ${F^i}^\intercal\Mat(0,0,-\beta)F^i=\Mat(0,0,-\beta)$ and thus: 
\[
\begin{array}{ll}
&{F^i}^\intercal\Mat(\Id,0,0)F^i\\
\preceq & -{F^i}^\intercal {E^j}^\intercal \left(\pos{W^j}+\psd{W^j}\right) E^j F^i +\Mat(P^i,2q^i,0)\\
&-{E^{ij}}^\intercal \left(\pos{U^{ij}}+\psd{U^{ij}}\right) E^{ij}+\Mat(0,0,\beta)
\end{array}
\]
We conclude by the definition of $E^{ij}$.

(4) Since $(\Pq,\mathcal{W},\mathcal{U},\mathcal{Z},\alpha,\beta)$ defines a PQL function, then the result of Th.~\ref{boundedth} holds that is 
$\rea\subseteq \{x\in\rd\mid \norm{x}_2^2\leq \beta\}$ and since $X^0\subseteq \rea$, $\sup_{x\in X^0} \norm{x}_2^2 \leq \beta$.

(5) Now assume that $(\Pq,\mathcal{W},\mathcal{U},\mathcal{Z},\alpha,\beta)$ is an optimal solution such that $\alpha>0$ and suppose that $\sup_{x\in X^0} L(x)\neq \alpha$. We remark that $\sup_{x\in X^0} L(x)=\sup_{i\in\In} \sup_{x\in X^i\cap X^0} L^i(x)$ and from Constraint~\eqref{initialeqrelax}, for all $i\in\In$, $X^i\cap X^0\subseteq \{x\mid L^i(x)\leq \alpha\}$. Hence for all $i\in\In$, $\sup_{x\in X^i\cap X^0} L^i(x)\leq \alpha$ and thus $\sup_{x\in X^0} L(x)\leq \alpha$. Let $\epsilon>0$ such that $\gamma=\alpha-\epsilon\geq 0$ and $\sup_{x\in X^0} L(x)\leq \gamma$.
Let us denote by $\mathbf{N}$ the matrix defined by $\mathbf{N}_{1,1}=1$ and $\mathbf{N}_{l,m}=0$ for all $(l,m)\in\{1,\ldots,d+1\}^2\backslash\{(1,1)\}$. We have $-\Mat(P^i,2q^i,-\gamma)-{E^{i0}}^\intercal \left(\pos{Z^{0i}}+\psd{Z^{0i}}\right) E^{i0}=
-\Mat(L^i)+\gamma N-{E^{i0}}^\intercal \left(\pos{Z^{0i}}+\psd{Z^{0i}}\right) E^{i0}=(\alpha-\epsilon)N-\Mat(L^i)-{E^{i0}}^\intercal \left(\pos{Z^{0i}}+\psd{Z^{0i}}\right) E^{i0}$. Let us remark since ${E_{1,1}^{i0}}$ is equal to 1, 
that ${E^{i0}}^\intercal N {E^{i0}}=N$. Thus, 
\[
\begin{array}{ll}
&-\Mat(P^i,2q^i,-\gamma)-{E^{i0}}^\intercal \left(\pos{Z^{0i}}+\psd{Z^{0i}}\right) E^{i0}\\
=&-\Mat(L^i)+\alpha N-{E^{i0}}^\intercal \left(\pos{Z^{0i}}+\epsilon N+\psd{Z^{0i}}\right) E^{i0}\enspace.
\end{array}
\] 
In a second time, 
\[
\begin{array}{ll}
&\Mat(P^i,2q^i,-\gamma)-\Mat(\Id,0,-\beta)-{E^i}^\intercal \left(\pos{W^i}+\psd{W^i}\right) E^i\\
=&\Mat(P^i,2q^i,-\alpha)-\Mat(\Id,0,-\beta)-{E^i}^\intercal \left(\pos{W^i}+\psd{W^i}\right) E^i\\
&+\epsilon N\enspace .
\end{array}
\]
From Constraint~\eqref{boundedeqrelax}, $\Mat(P^i,2q^i,-\gamma)-\Mat(\Id,0,-\beta)-{E^i}^\intercal \left(\pos{W^i}+\psd{W^i}\right) E^i$ is positive semidefinite. We conclude that $(\Pq,\mathcal{W},\mathcal{U},\mathcal{Z}',\gamma,\beta)$ with 
$\mathcal{Z}'=\{\left(\pos{Z^{i0}}+\epsilon N,\psd{Z^{i0}}\right)\in \Sp{n_{i0}}\times \Sdp{n_{i0}},i\in\In\}$ is feasible and 
$\gamma+\beta=\alpha+\beta-\epsilon$ thus  $(\Pq,\mathcal{W},\mathcal{U},\mathcal{Z},\alpha,\beta)$ cannot be optimal.
\end{proof}

\begin{proposition}
The following statements hold: 
\begin{enumerate}
\item $F(C(\betab))\subseteq C(\betab)\iff\sha{F}(\betab)\leq \betab$;
\item $\rea\subseteq\inf\{C(\betab)\mid \betab\in\rr^{d+1} \text{ s.t. }\sha{F}(\betab)\leq \betab\}$;
\item For all $l\in [d+1]$, $\sha{F_{ij,l}}(\betab)$ is the optimal value of quadratic program;
\item For all $k\in[d]$, $X_k^0=\displaystyle{\max\{(\inf_{x\in X^0} x_k)^2,(\sup_{x\in X^0} x_k)^2\}}$ and if $L$ is constructed from an optimal solution $(\Pq,\mathcal{W},\mathcal{U},\mathcal{Z},\alpha,\beta)$ of~\eqref{SDPlyap} such that $\alpha>0$, then $X_{d+1}^0=\alpha$.
\end{enumerate}
\end{proposition}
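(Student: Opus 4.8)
The plan is to prove the four assertions by unfolding the definitions of $C(\betab)$, $\sha{F}$ and $\map{}$, and then invoking the polyhedral partition~\eqref{partition}, Tarski's characterization~\eqref{reatarski}, and the earlier Proposition~\ref{PQLprop}. Only the first assertion requires genuine care; the other three follow quickly once it is in place, so I would establish (1) first and reuse it.

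For assertion (1), I would start from $F(C(\betab))=\map{}(C(\betab))\cup X^0$ and decompose the image. Since $\state$ is a partition, every $x\in C(\betab)$ lies in a unique cell $X^i$, with $\map{}(x)=A^ix+b^i$ landing in a unique cell $X^j$; hence $x\in X^i\cap (f^{i})^{-1}(X^j)=X^{ij}$, which is nonempty exactly when $(i,j)\in\overline{\Sw}$. This gives $\map{}(C(\betab))=\bigcup_{(i,j)\in\overline{\Sw}} f^i\big(C(\betab)\cap X^{ij}\big)$. On $X^i$ the template $L$ coincides with $L^i$, so membership $x\in C(\betab)\cap X^{ij}$ is precisely the feasibility constraint $x_k^2\le\betab_k,\ L^i(x)\le\betab_{d+1},\ x\in X^{ij}$ appearing under the suprema defining $\sha{F_{ij,l}}$; similarly $L(y)=L^j(y)$ at the image point $y\in X^j$. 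Writing $y_k=A^i_{k\cdot}x+b^i_k$ and $L(y)=L^j(A^ix+b^i)$, the inclusion $F(C(\betab))\subseteq C(\betab)$ says exactly that each such quantity is bounded by $\betab_k$ resp.\ $\betab_{d+1}$ over the feasible set, together with the $X^0$ contribution bounded by $X^0_k$ resp.\ $X^0_{d+1}$. Taking suprema converts this into $\sha{F_l}(\betab)\le\betab_l$ for all $l$, and conversely the pointwise bound $\sha{F}(\betab)\le\betab$ forces every image point back into $C(\betab)$; the empty-feasible-region case I would phrase as a supremum equal to $-\infty$, which satisfies the inequality vacuously.

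Assertion (2) is then immediate: by~\eqref{reatarski}, $\rea$ is the least post-fixed point of $F$, so any $\betab$ with $\sha{F}(\betab)\le\betab$ yields, via (1), a post-fixed point $C(\betab)\supseteq\rea$, and intersecting over all such $\betab$ preserves the inclusion. For assertion (3), I would simply read off the structure of the defining optimization problems: the objectives $(A^i_{k\cdot}x+b^i_k)^2$ and $L^j(A^ix+b^i)$ are quadratic in $x$, the constraints $x_k^2\le\betab_k$ and $L^i(x)\le\betab_{d+1}$ are quadratic, and $x\in X^{ij}$ is a finite system of affine inequalities, so each $\sha{F_{ij,l}}(\betab)$ is the value of a (quadratically constrained) quadratic program.

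Assertion (4) splits in two. For $k\in[d]$, the image of the linear functional $x\mapsto x_k$ over the polytope $X^0$ is the interval $[\inf_{X^0}x_k,\ \sup_{X^0}x_k]$, and since $t\mapsto t^2$ is convex its maximum over an interval is attained at an endpoint, giving $X^0_k=\max\{(\inf_{X^0}x_k)^2,(\sup_{X^0}x_k)^2\}$. For the remaining claim, $X^0_{d+1}=\sup_{x\in X^0}L(x)=\alpha$ is exactly item~5 of Proposition~\ref{PQLprop} under the hypothesis $\alpha>0$. The main obstacle throughout is the bookkeeping in (1): one must verify that ranging over $\overline{\Sw}$ exhausts $\map{}(C(\betab))$, that the cellwise template $L^i$ correctly replaces $L$ on both the source cell $X^i$ and the target cell $X^j$, and that degenerate empty feasible regions are handled consistently by the supremum convention.
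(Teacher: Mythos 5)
Your proposal is correct and follows essentially the same route as the paper's proof: the same decomposition $F(C(\omega))=\map{}(C(\omega))\cup X^0$ with the image split over $(i,j)\in\overline{\Sw}$ and suprema identified with $\sha{F_l}$ for assertion (1), Tarski's characterization~\eqref{reatarski} combined with (1) for (2), reading off the QCQP structure for (3) (which the paper dismisses as obvious), and the endpoint argument plus item 5 of Proposition~\ref{PQLprop} for (4). The only cosmetic differences are that you derive the endpoint maximum from convexity of $t\mapsto t^2$ where the paper invokes compactness and attainment at $z,u\in X^0$, and that you explicitly handle empty feasible regions via the $-\infty$ convention and correctly evaluate $L=L^j$ at the image point (the paper's displayed decomposition for the $(d+1)$-st coordinate misprints this as $L^i(y)$).
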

\begin{proof}
(1) $F(C(\betab))\subseteq C(\betab)$ iff for all $k\in[d]$, $\sup_{y\in F(C(\betab))} y_k^2\leq \betab_k$ and 
 $\sup_{y\in F(C(\betab))} L(y)\leq \betab_{d+1}$.
 Now for all $k\in[d]$:
 \[
 \begin{array}{ll}
 &\sup_{y\in F(C(\betab))} y_k^2\\
 =&\sup\{\sup_{y\in \map{}(C(\betab))} y_k^2,\sup_{y\in X^0} y_k^2\}\\
 =&\sup\{\sup_{(i,j)\in\overline{\Sw}}\displaystyle{\sup_{\substack{y=A^i x+b^i,\\ x\in C(\betab), x\in X^{ij}}} y_k^2},\sup_{y\in X^0} y_k^2\}\\
 =&\sha{F_k}(\betab)
 \end{array}
 \]
 and 
 \[
 \begin{array}{ll}
 &\sup_{y\in F(C(\betab))} L(y)\\
 =&\sup\{\sup_{y\in \map{}(C(\betab))} L(y),\sup_{y\in X^0} L(y)\}\\
 =&\sup\{\sup_{(i,j)\in\overline{\Sw}}\displaystyle{\sup_{\substack{y=A^i x+b^i,\\ x\in C(\betab),\ x\in X^{ij}}} L^i(y)},\sup_{y\in X^0} L(y)\}\\
 =&\sha{F_{d+1}}(\betab)
 \end{array}
 \]
 
 (2) From Eq.~\eqref{reatarski}, $\rea\subseteq \inf\{C(\betab)\mid \betab\in\rr^{d+1},\ \sha{F}(C(\betab))\subseteq C(\betab)\}$. We conclude using the first point.
 
 (3) Obvious.

(4) Let $k\in [d]$. Since $X^0$ is compact and $x\mapsto x_k$ is continuous then there exist $z\in X^0$ and $u\in X^0$ such that 
$z_k=\inf_{x\in X^0} x_k$ and $u_k=\sup_{x\in X^0} x_k$. Hence $z_k\leq x_k\leq u_k$ for all $x\in X^0$ and thus 
for all $x\in X^0$, $x_k^2\leq \max(z_k^2,u_k^2)$. Since $z$ and $u$ belong to $X^0$, then 
$X^0_k=\max(z_k^2,u_k^2)$. We have assumed that $(\Pq,\mathcal{W},\mathcal{U},\mathcal{Z},\alpha,\beta)$ is an optimal
solution of Problem~\eqref{SDPlyap} and $\alpha>0$ then ${X^0}^\dag(L)=\alpha$ from Prop.~\ref{PQLprop}.  
\end{proof}
\begin{proposition}[Safe overapproximation]
The following assertions are true:
\begin{enumerate}
\item For all $l\in[d+1]$, $\rel{F_l}$ is the optimal value of a SDP program;
\item $\sha{F}\leq \rel{F}$\enspace . 
 \end{enumerate}
\end{proposition}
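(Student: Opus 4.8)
The plan is to dispatch the two assertions separately: the first is a matter of reading off the structure of~\eqref{relaxedSDP}, while the second is a weak Lagrangian duality (S-procedure) argument.

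For the first assertion I would argue by inspection of~\eqref{relaxedSDP}. For a fixed parameter $\betab$, the objective $\eta$ is linear in the decision variables, the map $(\lambda,Y,Z)\mapsto\Phi_{ij,l}(\lambda,Y,Z)$ is affine, and the scalar $\eta-\sum_{k=1}^{d+1}\lambda_k\betab_k$ is affine in $(\eta,\lambda)$; hence the matrix constraint $(\eta-\sum_k\lambda_k\betab_k)\mathbf{N}-\Phi_{ij,l}(\lambda,Y,Z)\succeq 0$ is a linear matrix inequality. Together with $\lambda\in\rr_+^{d+1}$, $Z\succeq 0$ and the entrywise constraint $Y\geq 0$, the feasible set is a spectrahedron, so $\rel{F_{ij,l}}(\betab)$ is the optimal value of an SDP. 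Since $X_{d+1}^0=\alpha$ is a constant and each $X_k^0$, $k\in[d]$, is the value of an LP over the polytope $X^0$ (Prop.~\ref{modelprop}), the quantity $\rel{F_l}(\betab)=\sup\{\sup_{(i,j)}\rel{F_{ij,l}}(\betab),X_l^0\}$ is the maximum of finitely many SDP optimal values and is therefore computed by semidefinite programming.

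For the second assertion I would fix $(i,j)\in\overline{\Sw}$, $l\in[d+1]$ and $\betab$, and prove that $\sha{F_{ij,l}}(\betab)\leq\eta$ for \emph{every} feasible point $(\lambda,\eta,Y,Z)$ of~\eqref{relaxedSDP}; taking the infimum over such points yields $\sha{F_{ij,l}}\leq\rel{F_{ij,l}}$, and the common sup/max structure of $\sha{F_l}$ and $\rel{F_l}$ lifts this componentwise to $\sha{F}\leq\rel{F}$. Set $u=\ux$ and $M_l:={F^i}^\intercal\Mat_l F^i$ for $l\in[d]$, $M_{d+1}:={F^i}^\intercal\Mat_L^j F^i$. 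By~\eqref{lemmasimple} and the definitions of $\Mat_k$ and $\Mat_L^i$, the objective of $\sha{F_{ij,l}}$ is $u^\intercal M_l u$, and the constraints $x_k^2\leq\betab_k$, $L^i(x)\leq\betab_{d+1}$ read $u^\intercal\Mat_k u\leq\betab_k$, $u^\intercal\Mat_L^i u\leq\betab_{d+1}$. By Lemma~\ref{overpol}, $X^{ij}\subseteq\{x\mid E^{ij}u\geq 0\}$, so enlarging the feasible region to this polyhedron only increases the supremum and keeps it an upper bound. For any feasible $x$ and any $\lambda\geq 0$, $Y\geq 0$, $Z\succeq 0$, the terms $\lambda_k(\betab_k-u^\intercal\Mat_k u)$ and $\lambda_{d+1}(\betab_{d+1}-u^\intercal\Mat_L^i u)$ are nonnegative, and $u^\intercal{E^{ij}}^\intercal(Y+Z)E^{ij}u=(E^{ij}u)^\intercal(Y+Z)(E^{ij}u)\geq 0$ because $Y+Z\in\Sp{n_{ij}}+\Sdp{n_{ij}}\subseteq\cop{n_{ij}}$ (Prop.~\ref{copositivecha}) and $E^{ij}u\geq 0$. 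Adding these nonnegative quantities to the objective and regrouping gives
\[
u^\intercal M_l u\leq\sum_{k=1}^{d+1}\lambda_k\betab_k+u^\intercal\Phi_{ij,l}(\lambda,Y,Z)u.
\]
Since $u_1=1$, i.e. $u^\intercal\mathbf{N}u=1$, evaluating the LMI constraint at $u$ gives $u^\intercal\Phi_{ij,l}u\leq\eta-\sum_k\lambda_k\betab_k$, whence $u^\intercal M_l u\leq\eta$; taking the supremum over feasible $x$ yields $\sha{F_{ij,l}}(\betab)\leq\eta$.

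The main obstacle will be handling the homogenization cleanly: the matrix inequality is a homogeneous condition on all of $\rr^{d+1}$, whereas the bound on the objective is only needed on the affine slice $\{u\mid u_1=1\}$, and the matrix $\mathbf{N}$ is precisely what relates the two via $u^\intercal\mathbf{N}u=u_1^2$. I would also be careful that every relaxation points in the correct direction for an upper bound, namely that enlarging the feasible set through Lemma~\ref{overpol} and adding the nonnegative Lagrangian and copositive slack terms both \emph{increase} the bounding quantity. Finally, the degenerate case in which the primal feasible set is empty (so $\sha{F_{ij,l}}(\betab)=-\infty$) should be recorded as trivial, since the inequality then holds vacuously.
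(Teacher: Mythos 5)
Your proof is correct, and while it rests on the same weak--duality/S-procedure idea as the paper, it is organized in a genuinely different way. The paper argues top-down, as a chain of inequalities between optimal values of successively relaxed optimization problems: it first invokes the weak duality theorem to bound $\sha{F_{ij,d+1}}(\betab)$ by an infimum over $\lambda\geq 0$ of the Lagrangian dual function, then replaces the nonnegativity constraint over $X^{ij}$ by the cone-copositivity condition $\Mat(\cdot)\in\ccop{d+1}{E^{ij}}$ via Lemma~\ref{polyhedralcopo}, then restricts to certificates of the form ${E^{ij}}^\intercal(Y+Z)E^{ij}$ via Corollary~\ref{copositiverestriction}, and finally identifies the resulting program with~\eqref{relaxedSDP} using Eq.~\eqref{lemmasimple} and the linearity of $q\mapsto\Mat(q)$. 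You argue bottom-up: you fix an arbitrary feasible point $(\lambda,\eta,Y,Z)$ of~\eqref{relaxedSDP}, evaluate the LMI at the lifted vector $u=(1\ x^\intercal)^\intercal$, use $u^\intercal\mathbf{N}u=1$ to pass from the homogeneous matrix inequality to the affine slice, and conclude $\sha{F_{ij,l}}(\betab)\leq\eta$ pointwise before taking the infimum. Your route needs only the easy direction of the copositivity machinery --- in fact $(E^{ij}u)^\intercal Y(E^{ij}u)\geq 0$ follows directly from $Y\geq 0$ entrywise and $E^{ij}u\geq 0$ (Lemma~\ref{overpol}), so even the appeal to Prop.~\ref{copositivecha} is dispensable --- it never invokes weak duality or Corollary~\ref{copositiverestriction} as black boxes, and it records the degenerate cases (empty primal or dual feasible set) explicitly, which the paper leaves implicit. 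What the paper's chain buys in exchange is an itemized account of where conservatism enters (the duality gap, then the copositive-cone restriction), which is useful for understanding the gap between $\sha{F}$ and $\rel{F}$; your verification is shorter and self-contained, and incidentally reads the displayed definition of $\Phi_{ij,l}$ with the intended multiplier $\lambda_{d+1}$ where the paper's formula has the typo $\lambda_{l+1}$. For assertion (1) the paper merely says ``obvious''; your spectrahedron justification, including the observation that $\rel{F_l}$ is a finite maximum of SDP values and the constants $X_l^0$, supplies exactly the intended content.
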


\begin{proof}
(1) Obvious.

(2) We have to prove that for all $k\in [d+1]$, for all $\omega\in\rr^{d+1}$, $\sha{F_{ij,k}}(\betab)\leq \rel{F_{ij,k}}(\betab)$. We do the proof for the case $k=d+1$.
The other cases follows the same proof constructions.

Applying the weak duality theorem, we obtain:
\[
\begin{array}{r}
\sha{F_{ij,d+1}}(\betab)\leq\displaystyle{\inf_{\lambda\in\rr_+^{d+1}} \sup_{x\in X^{ij}} L^j(f^i(x))+\sum_{k=1}^{d}\lambda_k (\betab_k-x_k^2)}\\
+\lambda_{d+1}(\betab_{d+1}-L^i(x))
\end{array}
\]
Using Lemma~\ref{polyhedralcopo} and Corollary~\ref{copositiverestriction} we get:
\[
\begin{array}{ccl}
&\sha{F_{ij,d+1}}(\betab)\\
\leq &\displaystyle{\inf_{\lambda,\eta}} & \eta\\
&\st & \left\{
\begin{array}{l}
\forall\, x\in X^{ij},\\
\displaystyle{\eta-L^j(f^i(x))-\sum_{k=1}^{d}\lambda_k (\betab_k-x_k^2)}\\
\displaystyle{-\lambda_{d+1}(\betab_{d+1}-L^i(x))-p(f^i(x))\geq 0}\\
\\
\lambda\geq 0,\ \eta\in\rr
\end{array}
\right.\\
\leq &\displaystyle{\inf_{\lambda,\eta}}  & \eta\\
&\st & \left\{
\begin{array}{l}
\displaystyle{\Mat\left(\eta-L^j(f^i(x))-\sum_{k=1}^{d}\lambda_k (\betab_k-x_k^2)\right.}\\
\displaystyle{\left.-\lambda_{d+1}(\betab_{d+1}-L^i(x))\right)}\in \ccop{d+1}{E^{ij}}\\
\\
\lambda\in\rr_+^{d+1},\ \eta\in\rr
\end{array}
\right.\\
\leq & \displaystyle{\inf_{\lambda,\eta,Y,Z}} & \eta\\
&\st & \left\{
\begin{array}{l}
\displaystyle{\Mat\left(\eta-L^j(f^i(x))-\sum_{k=1}^{d}\lambda_k (\betab_k-x_k^2)\right.}\\
\displaystyle{\left.-\lambda_{d+1}(\betab_{d+1}-L^i(x))\right)}-{E^{ij}}^\intercal (Y+Z)E^{ij}\\
\succeq 0\\
\\
\lambda\in\rr_+^{d+1},\ \eta\in\rr,\ Y\geq 0,\ Z\succeq 0
\end{array}
\right.
\end{array}
\]

Now from Eq.~\eqref{lemmasimple} and since $A\to \Mat(A)$ is linear, we have:
\[
\begin{array}{l}
\displaystyle{\Mat\left(\eta-L^j(f^i(x))-\sum_{k=1}^{d}\lambda_k (\betab_k-x_k^2)-\lambda_{d+1}(\betab_{d+1}-L^i(x))\right)}\\
\displaystyle{=(\eta-\sum_{k=1}^{d+1} \lambda_k \betab_k)N- {F^i}^\intercal M_L^j F^i+\sum_{k=1}^{d} \lambda_k M_k+\lambda_{d+1} M_L^i}\\
=\displaystyle{(\eta-\sum_{k=1}^{d+1} \lambda_k \betab_k)N-\Phi_{ij,d+1}(\lambda,Y,Z)}+{E^{ij}}^\intercal (Y+Z)E^{ij}
\end{array}
\]
Finally:
$\Mat(\eta-L^j(f^i(x))-\sum_{k=1}^{d}\lambda_k (\betab_k-x_k^2)-\lambda_{d+1}(\betab_{d+1}-L^i(x)))-{E^{ij}}^\intercal (Y+Z)E^{ij}
=(\eta-\sum_{k=1}^{d+1} \lambda_k \betab_k)N-\Phi_{ij,d+1}(\lambda,Y,Z)$. Since $\rel{F_{ij,l}}$ is the infimum of $\eta$
over the constraint $(\eta-\sum_{k=1}^{d+1} \lambda_k \betab_k)N-\Phi_{ij,d+1}(\lambda,Y,Z)\succeq 0$, $\lambda\in\rr_+^{d+1},\ \eta\in\rr,\ Y\geq 0$ and $Z\succeq 0$,
this achieves the proof.
\end{proof}
\begin{proposition}
Let $(i,j)\in\overline{\Sw}$, $l\in [d+1]$, $\lambda\in\rr_+^{d+1}$.
The following statements are true:
\begin{enumerate}
\item $F_{ij,l}^{\lambda}$ is affine;
\item $F_{ij,l}^{\lambda}$, $\rel{F_{ij,l}}$ and $\rel{F_l}$ are monotone;
\item $\rel{F_{ij,l}}$ and $\rel{F_l}$ are upper semi-continuous. 
\end{enumerate}
\end{proposition}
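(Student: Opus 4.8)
The plan is to exploit the explicit representation of $F_{ij,l}^{\lambda}$ supplied by Lemma~\ref{relaxedviainfsup}, after which all three assertions reduce to routine order-theoretic and topological facts. The crucial structural observation is that in Eq.~\eqref{affine} the term $\inf_{Y\geq 0,\,Z\succeq 0}\sup_{x\in\rd}\ux^\intercal\Phi_{ij,l}(\lambda,Y,Z)\ux$ does not depend on $\betab$; it is a fixed scalar $c(\lambda)$ attached to the chosen multiplier $\lambda$. Hence $F_{ij,l}^{\lambda}(\betab)=\sum_{m=1}^{d+1}\lambda_m\betab_m+c(\lambda)$ is an affine function of $\betab$, which settles the first assertion.

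For the second assertion I would argue in three layers. First, since $\lambda\in\rr_+^{d+1}$, the linear part of $F_{ij,l}^{\lambda}$ has nonnegative coefficients, so $\betab\leq\betab'$ forces $F_{ij,l}^{\lambda}(\betab)\leq F_{ij,l}^{\lambda}(\betab')$; thus each $F_{ij,l}^{\lambda}$ is monotone. Second, by Lemma~\ref{relaxedviainfsup} we have $\rel{F_{ij,l}}=\inf_{\lambda\in\rr_+^{d+1}}F_{ij,l}^{\lambda}$, and a pointwise infimum of monotone maps is monotone, so $\rel{F_{ij,l}}$ is monotone. Third, $\rel{F_l}=\sup\{\sup_{(i,j)\in\overline{\Sw}}\rel{F_{ij,l}},X_l^0\}$ is a supremum of monotone maps together with a constant, hence monotone.

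For the third assertion I would use that an affine map is continuous, so in particular upper semi-continuous, and that a pointwise infimum of upper semi-continuous functions is upper semi-continuous; applied to $\rel{F_{ij,l}}=\inf_{\lambda}F_{ij,l}^{\lambda}$ this gives the upper semi-continuity of $\rel{F_{ij,l}}$. Finally, $\rel{F_l}$ is the pointwise maximum of the finitely many upper semi-continuous maps $\rel{F_{ij,l}}$, $(i,j)\in\overline{\Sw}$, and the constant $X_l^0$. I expect the only genuine subtlety here to be this last step: a supremum over an \emph{infinite} family of upper semi-continuous functions need not remain upper semi-continuous, whereas a maximum of finitely many does. Since $\overline{\Sw}\subseteq\ind^2$ is a finite index set, the maximum is finite and $\rel{F_l}$ is upper semi-continuous, which completes the argument.
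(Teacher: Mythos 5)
Your proof is correct and follows essentially the same route as the paper: affinity read off from Eq.~\eqref{affine} since the $\inf$--$\sup$ term is independent of $\betab$, monotonicity from the nonnegativity of $\lambda$ propagated through infima and suprema, and upper semi-continuity of $\rel{F_{ij,l}}$ as an infimum of (affine, hence continuous) functions together with the \emph{finite} supremum defining $\rel{F_l}$. You are in fact slightly more careful than the paper's one-line proof, which loosely attributes the monotonicity of $\rel{F_{ij,l}}$ to a ``supremum'' of monotone functions where an infimum is involved, and you correctly flag that finiteness of $\overline{\Sw}$ is what saves upper semi-continuity in the last step.
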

\begin{proof}
The first assertion is straightforward from Equation~\eqref{affine}. The function $w\mapsto F_{ij,l}^{\lambda}(w)$
is monotone from the positivity of $\lambda$ and the two last functions comes are monotone as the supremum 
of monotone functions.  The function $w\mapsto \rel{F_{ij,l}}(w)$ is upper semi-continuous as the infimum of continuous functions and 
$w\mapsto \rel{F_{l}}(w)$ is upper semi-continuous as the finite supremum of upper semi-continuous functions. 
\end{proof}
\begin{proposition}
The vector $\betab^0$ satisfies $\rel{F}(\betab^0)\leq \betab^0$.
\end{proposition}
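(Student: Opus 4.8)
The plan is to establish the componentwise inequality $\rel{F_l}(\betab^0)\leq\betab^0_l$ separately for each $l\in[d+1]$, where $\betab^0$ is given by~\eqref{initialpoint}. Since $\rel{F_l}(\betab)=\sup\{\sup_{(i,j)\in\overline{\Sw}}\rel{F_{ij,l}}(\betab),X_l^0\}$, it suffices to verify that $X_l^0\leq\betab^0_l$ and that $\rel{F_{ij,l}}(\betab^0)\leq\betab^0_l$ for every $(i,j)\in\overline{\Sw}$. The bounds on $X_l^0$ are immediate from earlier results: for $l=d+1$ the fourth assertion of Prop.~\ref{modelprop} gives $X_{d+1}^0=\alpha=\betab^0_{d+1}$, and for $l\in[d]$ one has $X_l^0=\sup_{x\in X^0}x_l^2\leq\sup_{x\in X^0}\norm{x}_2^2\leq\beta=\betab^0_l$ by the fourth assertion of Prop.~\ref{PQLprop}.

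For the SDP values I would exploit that each $\rel{F_{ij,l}}(\betab^0)$ is, by~\eqref{relaxedSDP}, an infimum over feasible tuples $(\lambda,\eta,Y,Z)$, so it is enough to exhibit one feasible tuple with objective $\eta=\betab^0_l$. First I would record the linear identities $\Mat(\Id,0,0)=\sum_{k=1}^d\Mat_k$, $\Mat_L^i=\Mat(P^i,2q^i,0)$, $\Mat(P^i,2q^i,-\alpha)=\Mat_L^i-\alpha\mathbf{N}$ and $\Mat(0,0,\beta)=\beta\mathbf{N}$, which recast the third assertion of Prop.~\ref{PQLprop} as
\[
{F^i}^\intercal\Mat(\Id,0,0)F^i\preceq\Mat_L^i+(\beta-\alpha)\mathbf{N}-{E^{ij}}^\intercal R^{ij}E^{ij},
\]
with $R^{ij}$ the bracketed matrix there. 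For the coordinate $l=d+1$ I would pick $\lambda$ with $\lambda_{d+1}=1$ and all other entries zero, $\eta=\alpha$, and $(Y,Z)=(\pos{U^{ij}},\psd{U^{ij}})$; then $\sum_k\lambda_k\betab^0_k=\alpha$, the constraint of~\eqref{relaxedSDP} collapses to $-\Phi_{ij,d+1}(\lambda,Y,Z)\succeq0$, and for this choice $\Phi_{ij,d+1}(\lambda,Y,Z)$ is exactly the negative of the left-hand side of the stability constraint~\eqref{stabilityeqrelax}, hence $\preceq0$. This yields $\rel{F_{ij,d+1}}(\betab^0)\leq\alpha=\betab^0_{d+1}$.

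For the coordinates $l\in[d]$ I would again take $\lambda$ with $\lambda_{d+1}=1$ and the rest zero, and $\eta=\beta$. Here I would split $R^{ij}=Y_0+Z_0$, with $Y_0\in\Sp{n_{ij}}$ collecting the entrywise nonnegative blocks (those built from $\pos{W^j}$ and $\pos{U^{ij}}$) and $Z_0\in\Sdp{n_{ij}}$ the semidefinite ones, and set $(Y,Z)=(Y_0,Z_0)$. Using that $\Mat_l\preceq\Mat(\Id,0,0)$ (their difference is the lift of the nonnegative form $\sum_{k\neq l}x_k^2$) and congruence by $F^i$, we get ${F^i}^\intercal\Mat_lF^i\preceq{F^i}^\intercal\Mat(\Id,0,0)F^i$; chaining with the recast third assertion gives ${F^i}^\intercal\Mat_lF^i\preceq\Mat_L^i+(\beta-\alpha)\mathbf{N}-{E^{ij}}^\intercal R^{ij}E^{ij}$, which is precisely the feasibility constraint $(\beta-\alpha)\mathbf{N}-\Phi_{ij,l}(\lambda,Y_0,Z_0)\succeq0$. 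Hence $\rel{F_{ij,l}}(\betab^0)\leq\beta=\betab^0_l$, and assembling all coordinates gives $\rel{F}(\betab^0)\leq\betab^0$. I expect the main obstacle to be exactly the coordinates $l\in[d]$: no single computable-PQL constraint controls the individual block ${F^i}^\intercal\Mat_lF^i$, so one must route through the aggregate bound of Prop.~\ref{PQLprop}(3) by combining the Loewner monotonicity $\Mat_l\preceq\Mat(\Id,0,0)$ with the nonnegative/semidefinite splitting of $R^{ij}$.
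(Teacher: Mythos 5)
Your proof is correct and takes essentially the same route as the paper's: you reduce to $X_l^0\leq\betab_l^0$ plus exhibiting feasible points of~\eqref{relaxedSDP}, choose the same multiplier $\bar{\lambda}$ (only $\bar{\lambda}_{d+1}=1$), certify $l=d+1$ with $(\pos{U^{ij}},\psd{U^{ij}})$ via~\eqref{stabilityeqrelax}, and certify $l\in[d]$ by splitting the matrix of Prop.~\ref{PQLprop}(3) into its nonnegative and semidefinite parts and chaining with $\Mat_l\preceq\Mat(\Id,0,0)$ under congruence by $F^i$, exactly as in the appendix. No gaps to report.
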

\begin{proof}
From Prop.~\ref{PQLprop}, we have for all $k\in [d]$, $X_k^0\leq \beta=\betab_k^0$ and $X_{d+1}^0=\alpha=\betab_{d+1}^0$.

Then it suffices to prove that for all $l\in [d+1]$, for all $(i,j)\in\overline{\Sw}$, $\rel{F_{ij,l}}(\betab^0)\leq \betab^0$. We can show it by proving that
 for all $l\in [d+1]$, for all $(i,j)\in\overline{\Sw}$, there exist $\lambda\geq 0$, $Y\geq 0$ and $Z\succeq 0$ such that:
\[
(\betab_l^0-\sum_{k=1}^{d+1} \lambda_k \betab_k)N-\Phi_{ij,l}(\lambda,Y,Z)\succeq 0
\]
Let us define $\bar{\lambda}$ by $\bar{\lambda}_{d+1}=1$ and $\bar{\lambda}_{k}=0$ for all $k\in [d]$. Let $(i,j)\in\overline{\Sw}$.

Recall that $(\Pq,\mathcal{W},\mathcal{U},\mathcal{Z},\alpha,\beta)$ is an optimal solution of Problem~\eqref{SDPlyap}.
Let $l=d+1$, and let us extract $\pos{U^{ij}}$ and $\psd{U^{ij}}$ from $\mathcal{U}$, then we have:
\[
\begin{array}{l}
(\betab_{d+1}^0-\sum_{k=1}^{d+1} \bar{\lambda}_k \betab_k)N-\Phi_{ij,l}(\bar{\lambda},\pos{U^{ij}},\psd{U^{ij}})\\
=-{F^i}^\intercal\Mat_L^j F^i+\Mat_L^i-{E^{ij}}^\intercal (\pos{U^{ij}}+\psd{U^{ij}})E^{ij}
\end{array}
\]
We conclude that $(\betab_{d+1}^0-\sum_{k=1}^{d+1} \bar{\lambda}_k \betab_k)N-\Phi_{ij,l}(\bar{\lambda},\pos{U^{ij}},\psd{U^{ij}})\succeq 0$
since $(\Pq,\mathcal{W},\mathcal{U},\mathcal{Z},\alpha,\beta)$ is an optimal solution of Problem~\eqref{SDPlyap} and thus 
satisfies~\eqref{stabilityeqrelax}. We conclude that $(\betab_{d+1}^0,\bar{\lambda},\pos{U^{ij}},\psd{U^{ij}})$ is a feasible solution 
of the SDP problem~\eqref{relaxedSDP} and thus $\rel{F_{ij,d+1}}(\betab^0)\leq \betab_{d+1}^0$.

Let $l\in[d]$, $\bar{Y}=\begin{psmallmatrix}0_{n_i}  & 0_{n_i,n_j} \\
0_{n_j,n_i} & \pos{W^j}\end{psmallmatrix}+\pos{U^{ij}}$ and 
$\bar{Z}=\begin{psmallmatrix}0_{n_i}  & 0_{n_i,n_j} \\
0_{n_j,n_i} & \psd{W^j}\end{psmallmatrix}+\psd{U^{ij}}$ where $\pos{W^j}$ and $\psd{W^j}$ are extracted from $\mathcal{W}$ and $\pos{U^{ij}}$ and $\psd{U^{ij}}$ are extracted from $\mathcal{U}$.
We have:
\[
\begin{array}{l}
(\betab_{l}^0-\sum_{k=1}^{d+1} \bar{\lambda}_k \betab_k)N-\Phi_{ij,l}(\bar{\lambda},\bar{Y},\bar{Z})\\
=\Mat(0,0,\beta-\alpha)-{F^i}^\intercal\Mat_l F^i+\Mat_L^i-{E^{ij}}^\intercal (\bar{Y}+\bar{Z})E^{ij}
\end{array}
\]
Now, remark that $\Mat_l\preceq \Mat(\Id,0,0)$ and thus 
$-{F^i}^\intercal\Mat_l F^i+\Mat(P^i,2q^i,-\alpha)-{E^{ij}}^\intercal (\bar{Y}+\bar{Z})E^{ij}+\Mat(0,0,\beta)
\preceq -{F^i}^\intercal\Mat(\Id,0,0)F^i+\Mat(P^i,2q^i,-\alpha)-{E^{ij}}^\intercal (\bar{Y}+\bar{Z})E^{ij}+\Mat(0,0,\beta)$.
The right-hand-side sum of matrices is positive semi-definite from the second assertion of Prop.~\ref{PQLprop}.
We conclude that $(\betab_l^0(p),\bar{\lambda},\bar{Y},\bar{Z})$ is a feasible solution 
of the SDP problem~\eqref{relaxedSDP} and thus $\rel{F_{ij,l}}(\betab^0)\leq \betab_l^0$.
\end{proof}

\end{document}